\DeclareMathOperator{\gal}{Gal}
\newcommand{\BibTeX}{{\scshape Bib}\kern-.08em\TeX}
\newcommand{\T}{\S\kern .15em\relax }
\newcommand{\AMS}{$\mathcal{A}$\kern-.1667em\lower.5ex\hbox
        {$\mathcal{M}$}\kern-.125em$\mathcal{S}$}
\DeclareMathOperator{\im}{Im}
\DeclareMathOperator{\spm}{Spm}
\DeclareMathOperator{\pr}{pr}
\DeclareMathOperator{\rg}{rk}
\DeclareMathOperator{\spec}{Spec}
\renewcommand{\P}{\mathbb{P}}
\newcommand{\wmu}{\widehat{\mu}}
\newcommand{\C}{\mathbb{C}}
\newcommand{\Q}{\mathbb{Q}}
\newcommand{\adeg}{\widehat{\deg}}
\newcommand{\p}{\mathfrak{p}}
\newcommand{\E}{\overline{E}}
\newcommand{\F}{\overline{F}}
\newcommand{\sE}{\mathcal{E}}
\newcommand{\G}{\overline{G}}
\renewcommand{\O}{\mathcal{O}}
\newcommand{\sm}{\mathfrak{m}}
\newcommand{\f}{\mathbb{F}}
\newcommand{\ndot}{\raisebox{.4ex}{.}}
\title{On the global determinant method}
\date{\today}
\author{Chunhui Liu}
\address{Institute for Advanced Study in Mathematics\\
Harbin Institute of Technology\\
150001 Harbin\\P. R. China}
\email{chunhui.liu@hit.edu.cn}
\begin{document}
\def\smfbyname{}
\begin{abstract}
In this paper, we build the global determinant method of Salberger by Arakelov geometry explicitly. As an application, we study the dependence on the degree of the number of rational points of bounded height in plane curves. We will also explain why some constants will be more explicit if we admit the Generalized Riemann Hypothesis.
\end{abstract}
\begin{altabstract}
Dans cet article, on construit la m\'ethode globale de d\'eterminant de Salberger par la g\'eom\'etrie d'Arakelov explicitement. Comme une application, on \'etudie la d\'ependance du degr\'e du nombre de points rationnels de hauteur major\'ee dans courbes planes. On expliquera aussi pourquoi certaines constantes seront plus explicites si on admet l'hypoth\`ese g\'en\'eralis\'ee de Riemann.
\end{altabstract}

\maketitle

\tableofcontents
\section{Introduction}
Let $X\hookrightarrow\mathbb P^n_K$ be a projective variety over a number field $K$. For every rational point $\xi\in X(K)$, we denote by $H_K(\xi)$ the height (see \eqref{classic absolute height} for the definition) of $\xi$ with respect to the above closed immersion, for example, the classic Weil height (cf. \cite[\S B.2, Definition]{Hindry}). Let \[S(X;B)=\{\xi\in X(K)\mid H_K(\xi)\leqslant B\},\] where $B\geqslant1$ and the embedding morphism is omitted. By the Northcott's property, the cardinality $\#S(X;B)$ is finite for a fixed $B\in\mathbb R$.

In order to understand the density of the rational points of $X$, it is an important approach to study the function $\#S(X;B)$ with the variable $B\in\mathbb R^+$. For different required properties of $\#S(X;B)$, numerous methods have been applied. In this article, we are interested in the uniform upper bound of $\#S(X;B)$ for all $X\hookrightarrow\mathbb P^n_K$ with fixed degree and dimension, and for those satisfying certain common conditions.
\subsection{Determinant mathod}
In order to understand the function $\#S(X;B)$ of the variable $B\in\mathbb R^+$, we will introduce the so-called \textit{determinant method} to study the number of rational points with bounded height in arithmetic varieties, which was proposed in \cite{Heath-Brown}.
\subsubsection{Basic ideas and history}
Traditionally, the determinant method is proposed over the rational number field $\Q$ to avoid some extra technical troubles. In \cite{Bombieri_Pila} (see also \cite{Pila95}), Bombieri and Pila proposed a method of determinant argument to study plane affine curves. The monomials of a certain degree evaluated on a family of rational points in $S(X;B)$ having the same reduction modulo some prime numbers form a matrix whose determinant is zero by a local estimate. By this method, they proved $\#S(X;B)\ll_{\delta,\epsilon}B^{2 /\delta+\epsilon}$ for all $\epsilon>0$, where $\delta=\deg(X)$.

In \cite{Heath-Brown}, Heath-Brown generalized the method of \cite{Bombieri_Pila} to the higher dimensional case. His idea is to focus on a subset of $S(X;B)$ whose reductions modulo a prime number are a same regular point, and he proved that this subset can be covered by a bounded degree hypersurface which do not contain the generic point of $X$. Then he counted the number of regular points over finite fields, and controled the regular reductions. In \cite{Broberg04}, Broberg generalized it to the case over arbitrary number fields.

In \cite{Serre1997,Serre2008}, Serre asked whether $\#S(X;B)\ll_{X}B^{\dim(X)}(\log B)^c$ is verified for all arithmetic varieties $X$ with a particular constant $c$. In \cite{Heath-Brown}, Heath-Brown proposed a uniform version $\#S(X;B)\ll_{d,\delta,\epsilon}B^{d+\epsilon}$ for all $\epsilon>0$ with $\delta=\deg(X)$ and $d=\dim(X)$, which is called \textit{the dimension growth conjecture}. He proved this conjecture for some special cases. Later, Browning, Heath-Brown and Salberger had some contributions on this subject, see \cite{Browning_Heath06I,Browning_Heath06II,Bro_HeathB_Salb} for the improvements of the determinant method and the proofs under certain conditions. In \cite{Salberger07}, Salberger considered the general reductions, and the multiplicities of rational points were taken into consideration, and he proved the dimension growth conjecture with certain conditions on the subvarieties of $X$.

\subsubsection{A global version}
The so-called \textit{global determinant method} was first introduced by Salberger in \cite{Salberger_preprint2013} in order to study the dimension growth conjecture mentioned above. In general, it allows one to use only one auxiliary hypersurface to cover the rational points of bounded height, and one needs to optimize the degree of this hypersurface. By the global version, he proved the dimension growth conjecture for $\deg(X)=\delta\geqslant4$ and $\#S(X;B)\ll_{\delta}B^{\frac{2}{\delta}}\log B$ when $X$ is a curve.

In \cite{Walsh_2015}, Walsh refined the global determinant method in \cite{Salberger_preprint2013}, and he removed the $\log B$ term in \cite{Salberger_preprint2013} when $X$ is a curve.

\subsubsection{The dependence on degree}
Let $X\hookrightarrow\mathbb P^n_{\mathbb Q}$ be an geometrically integral variety of degree $\delta$ and dimension $d$. We are also interested in the dependence of the uniform upper bound of $\#S(X;B)$ on $\delta$, in particular when $X$ is a plane curve ($n=2$ and $d=1$).

In \cite{Walkowiak2005}, Walkowiak studied this problem by counting integral points over $\mathbb Z$. In \cite[Th\'eor\`eme 2.10]{Motte2019}, Motte obtained an estimate, which has a better dependence on $B$ but a worse dependence on $\delta$ than that in \cite[Th\'eor\`eme 1]{Walkowiak2005}.

In fact, one is able to obtain a better dependence on $\delta$ by the global determinant method. In \cite{Cluckers2019}, Castryck, Cluckers, Dittmann and Nguyen improved \cite{Walsh_2015} on giving an explicit dependence on $\delta$. As applications, they obtained $\#S(X;B)\ll\delta^4B^{2/\delta}$ when $X$ is a plane curve, and a better partial result of the dimension growth conjecture than that in \cite{Salberger_preprint2013}, and the estimates in \cite{Walkowiak2005} and \cite{Motte2019} for the case of plane curves.

In \cite{Paredes-Sasyk2021}, the work \cite{Cluckers2019} was generalized over an arbitrary global field. Before \cite{Paredes-Sasyk2021} was announced, Vermeulen studied the case over $\mathbb F_q(t)$ in \cite{Vermeulen2020}.

Besides the uniform bounds of rational points,  \cite{Cluckers2019} also studied the bound of $2$-torsion points of the class group of number fields, which improved the work \cite[Theorem 1.1]{2-torsion} of Bhargava, Shankar, Taniguchi, Thorne, Tsimerman and Y. Zhao.


\subsubsection{Formulation by Arakelov geometry}
In \cite{Chen1,Chen2}, H. Chen reformulated the works of Salberger \cite{Salberger07} by Bost's slope method from Arakelov geometry developed in \cite{BostBour96}. In this formulation, H. Chen replaced the matrix of monomials by the evaluation map which sends a global section of a particular line bundle to its values on a family of rational points. By the slope inequalities, we can control the height of the evaluation map in the slope method, which replaces the role of Siegel's lemma in controlling heights.

There are two advantages by the approach of Arakelov geometry. First, Arakelov geometry gives a natural conceptual framework for the determinant method over arbitrary number fields. Next, it is easier to obtain explicit estimates, since the constants obtained from the slope inequalities are given explicitly in general. I has a apple.

\subsection{A global version with the formulation of Arakelov geometry}
In this article, we will construct the global determinant method over an arbitrary number field by Arakelov geometry following the strategy of \cite{Chen1,Chen2}. As a direct application, we will study the problem of counting rational points in plane curves, and we consider how these upper bounds depend on the degree. Some ideas are inspired by \cite{Salberger_preprint2013,Walsh_2015,Cluckers2019}.
\subsubsection{Main results}
First, we have the control of auxiliary hypersurface below in Corollary \ref{uniform upper bound of the degree of auxiliary hypersurface} and Corollary \ref{uniform upper bound of the degree of auxiliary hypersurface2}, which are deduced from Theorem \ref{upper bound of degree of general case}.
\begin{theo}
  Let $X$ be a geometrically integral hypersurface in $\mathbb P^n_K$ of degree $\delta$, and \[a_n = \dfrac{n}{{(n-1)\delta^{1/(n-1)}}}\] be a constant depending on $n$. Then there is a hypersurface of degree $\varpi$ which covers $S(X;B)$ but does not contain the generic point of $X$. In addition, we have
  \[\varpi\ll_{K,n}\delta^3B^{a_n}\]
  in Corollary \ref{uniform upper bound of the degree of auxiliary hypersurface}, and
  \[\varpi\ll_{K,n}\delta^{3-{1}/{(n-1)}}B^{a_n}\max\left\{\frac{\log B}{[K:\Q]},1\right\}\]
  in Corollary \ref{uniform upper bound of the degree of auxiliary hypersurface2}.

  If we assume the Generalized Riemann Hypothesis, the above constants depending on $K$ and $n$ will be given explicitly in Corollary \ref{uniform upper bound of the degree of auxiliary hypersurface} and Corollary \ref{uniform upper bound of the degree of auxiliary hypersurface2}.
\end{theo}
Since we apply the approach of Arakelov geometry in this article, we do not use the technique of "change of coordinate" in \cite[\S 3]{Walsh_2015} and \cite[\S 3.4]{Cluckers2019} any longer. Instead, we are able to obtain a uniform estimate directly.
\subsubsection{Potential applications}
Similar to the previous applications of the determinant, the above estimates are able to be applied to study the uniform upper bound of the number of rational points with bounded height, where we will initiate the induction on the dimension as usual and the study of the distribution of the loci of small degree in a variety (see \cite[\S 4]{Salberger_preprint2013} for such an example, which considered the density of conics in a cubic surface). By the above operation, we are able to obtain estimates of general arithmetic varieties from those of hypersurfaces via a suitable linear projection.

Since our method works over arbitrary number field and gives an explicit estimate (or under some technical conditions), the further applications is possible to go well under the same conditions and also be explicit.

As a direct application, we have the following results on counting rational points of bounded height in plane curves in Theorem \ref{upper bound of rational points of bounded height in plane curves} and Theorem \ref{upper bound of rational points of bounded height in plane curves2}.
\begin{theo}
  Let $X$ be a geometrically integral plane curve in $\mathbb P^2_K$ of degree $\delta$. Then we have
  \[\#S(X;B)\ll_{K}\delta^4 B^{{2}/{\delta}}\]
  in Theorem \ref{upper bound of rational points of bounded height in plane curves}, and
  \[\#S(X;B)\ll_K\delta^3B^{{2}/{\delta}}\log B\]
  in Theorem \ref{upper bound of rational points of bounded height in plane curves2}.

If we assume the Generalized Riemann Hypothesis, the above constants depending on $K$ will be given explicitly in Theorem \ref{upper bound of rational points of bounded height in plane curves} and Theorem \ref{upper bound of rational points of bounded height in plane curves2}.
\end{theo}
Theorem \ref{upper bound of rational points of bounded height in plane curves} generalizes \cite[Theorem 2]{Cluckers2019} over an arbitrary number field, and gives an explicit estimate under the assumption of the Generalized Riemann Hypothesis. Theorem \ref{upper bound of rational points of bounded height in plane curves2} can be viewed as a projective analogue of \cite[Theorem 3]{Cluckers2019} over an arbitrary number field, and a better partial result of the conjecture of Heath-Brown referred at Remark \ref{conj of HB}. These two estimates are better than those given in \cite[Th\'eor\`eme 1]{Walkowiak2005} and \cite[Th\'eor\`eme 2.10]{Motte2019}.
\subsubsection{The role of the Generalized Riemann Hypothesis}
In this work, some explicit estimates of the distribution of primes ideals are applied. If we admit \textbf{GRH (the Generalized Riemann Hypothesis)} of the Dedekind zeta function of the base number field, we are able to obtain more explicit estimates, see \cite{grenie2016explicit}, for example. Without the assumptions of GRH, it seems to be very difficult to obtain such explicit estimates over an arbitrary number field, since we do not know the zero-free region of the Dedekind zeta function. If we know the zero-free region clearly enough, for example, if we work on the rational number field $\Q$ or totally imaginary fields (see \cite{Ten} and \cite{grzeskowiak2017explicit} respectively), or we just want an implicit estimate (see \cite{Rosen1999}), we do not need to suppose GRH.
\subsection{Organization of article}
This paper is organized as following. In \S 2, we provide some preliminaries to construct the determinant method. In \S 3, we formulate the global determinant method by the slope method. In \S 4, we give some useful estimates on the non-geometrically integral reductions, a count of multiplicities over finite fields, the distributions of some particular prime ideals, and the geometric Hilbert-Samuel function. In \S 5, we provide an explicit upper bound of the determinant and lower bounds of auxiliary hypersurfaces. In \S 6, we give two uniform upper bounds of rational points of bounded height in plane curves. In \S 7, under the assumption of GRH, we give some explicit estimates of the distribution of prime ideals of bounded norm in a ring of integers, and explain how to apply these explicit estimates in the global determinant to get more explicit estimates. In Appendix A, we will give an explicit lower bound of a useful function induced by the local Hilbert-Samuel function.
\subsection*{Acknowledgement}
I would like to thank Prof. Per Salberger for introducing his brilliant work \cite{Salberger_preprint2013} to me, and for explaining to me some ingredients of his work. These discussions and suggestions play a significant role in this paper. I would also like to thank Prof. Stanley Yao Xiao for his suggestions on the study of the distribution of prime ideals. In particular, I would like to thank the anonymous referee for all the suggestions on revising the manuscript of the paper. Chunhui Liu was supported by Fundamental Research Funds for the Central Universities FRFCU5710010421.
\section{Fundamental settings}
In this section, we will introduce some preliminaries to understand the problem of counting rational points of bounded height. In particular, we will provide some basic notions in Arakelov geometry.
\subsection{Counting rational points of bounded height}
Let $K$ be a number field, and $\O_K$ be its ring of integers. We denote by $M_{K,f}$ the set of finite places of $K$, and by $M_{K,\infty}$ the set of infinite places of $K$. In addition, we denote by $M_K=M_{K,f}\sqcup M_{K,\infty}$ the set of places of $K$. For every $v\in M_K$ and $x\in K$, we define the absolute value $|x|_v=\left|N_{K_v/\Q_v}(x)\right|_v^\frac{1}{[K_v:\Q_v]}$ for each $v\in M_K$, extending the usual absolute values on $\Q_p$ or $\mathbb{R}$. Here $\mathbb Q_v$ denotes the $p$-adic field $\mathbb Q_p$, where $v$ is extended from $p$ under the extension $K/\Q$.

 Let $\xi=[\xi_0:\cdots:\xi_n]\in\mathbb P^n_K(K)$. We define the \textit{height} of $\xi$ in $\mathbb P^n_K$ as
\begin{equation}\label{classic absolute height}
  H_K(\xi)=\prod_{v\in M_K}\max_{0\leqslant i\leqslant n}\left\{|\xi_i|_v^{[K_v:\Q_v]}\right\}.
\end{equation}
We also define the logarithmic height of $\xi$ as
\begin{equation}\label{log height}
  h(\xi)=\frac{1}{[K:\Q]}\log H_K(\xi),
\end{equation}
which is invariant under the extensions over $K$ (cf. \cite[Lemma B.2.1]{Hindry}).

Suppose that $X$ is a closed integral subscheme of $\mathbb P^n_K$, and $\phi:X\hookrightarrow\mathbb P^n_K$ is the closed immersion. For $\xi\in X(K)$, we define $H_K(\xi)=H_K(\phi(\xi))$, and usually we omit the closed immersion $\phi$ if there is no confusion. Next, we denote
\[S(X;B)=\{\xi\in X(K)|H_K(\xi)\leqslant B\},\hbox{ and } N(X;B)=\#S(X;B).\]
By the Northcott's property (cf. \cite[Theorem B.2.3]{Hindry}), the cardinality $N(X;B)$ is finite for a fixed $B\geqslant1$.

\subsection{A function induced by local Hilbert-Samuel functions}
In this part, we will introduce a function induced by the local Hilbert-Samuel function of schemes at a closed point, and we will use this function in Proposition \ref{local control by Q_r}. For the motivation and background, see \cite[\S 2]{Salberger07} and \cite[\S 3.2]{Chen2}.

Let $k$ be a field, and $X$ be a closed subscheme of $\mathbb P^n_k$ of pure dimension $d$, which means all its irreducible components have the same dimension. Let $\xi$ be a closed point of $X$. We denote by
\begin{equation}\label{local hilbert of a closed point}
  H_\xi(s)=\dim_{\kappa(\xi)}\left(\sm_{X,\xi}^s/\sm_{X,\xi}^{s+1}\right)
\end{equation}
the local Hilbert-Samuel function of $X$ at the point $\xi$ with the variable $s\in\mathbb N$, where $\sm_{X,\xi}$ is the maximal ideal of the local ring $\O_{X,\xi}$, and $\kappa(\xi)$ is the residue field of the local ring $\O_{X,\xi}$. For this function, we have the polynomial asymptotic
\begin{equation}\label{definition of multiplicity}
H_\xi(s)=\frac{\mu_\xi(X)}{(d-1)!}s^{d-1}+o(s^{d-1})
\end{equation}
when $s\rightarrow+\infty$, and we define the positive integer $\mu_\xi(X)$ as the \textit{multiplicity} of point $\xi$ in $X$.

We define the series $\{q_\xi(m)\}_{m\geqslant0}$ as the increasing series of non-negative integers such that every integer $s\in\mathbb{N}$ appears exactly $H_\xi(s)$ times in this series. For example, if $H_\xi(0)=1$, $H_\xi(1)=2,H_{\xi}(2)=4,H_{\xi}(3)=5,\ldots$, then the series $\{q_\xi(m)\}_{m\geqslant0}$ is
\[\{0,1,1,2,2,2,2,3,3,3,3,3,4,\ldots\}.\]
Let $\{Q_\xi(m)\}_{m\geqslant0}$ be the partial sum of the series $\{q_\xi(m)\}_{m\geqslant0}$, which is
 \begin{equation}\label{Q_r}
   Q_\xi(m)=q_\xi(0)+q_\xi(1)+\cdots+q_\xi(m)
 \end{equation}
for all $m\in\mathbb N$.

If $X$ is a hypersurface of $\mathbb P^n_k$, then by \cite[Example 2.70 (2)]{Kollar2007}, the local Hilbert-Samuel function of $X$ at the point $\xi$ defined in \eqref{local hilbert of a closed point} is
\begin{equation*}
  H_\xi(s)={n+s-1\choose s}-{n+s-\mu_\xi(X)-1\choose s-\mu_\xi(X)}.
\end{equation*}
In this case, we have the following explicit lower bound of $Q_\xi(m)$, which is
  \begin{equation}\label{lower bound of Q_r}
    Q_\xi(m)>\left(\frac{(n-1)!}{\mu_\xi(X)}\right)^{\frac{1}{n-1}}\left(\frac{n-1}{n}\right)m^{\frac{n}{n-1}}-\frac{n^3+2n^2+n-4}{2n(n+1)}m.
  \end{equation}
  This lower bound has the optimal dominant term by the argument in \cite[Main Lemma 2.5]{Salberger07} and some other subsequent references. In Appendix \ref{Section: estimate of Q}, we will provide a detailed proof of this lower bound.

\subsection{Normed vector bundles}

   A \textit{normed vector bundle} over $\spec\O_K$ is all the pairings $\E=\left(E,\left(\|\ndot\|_v\right)_{v\in M_{K,\infty}}\right)$, where:
  \begin{itemize}
    \item $E$ is a projective $\O_K$-module of finite rank;
    \item $\left(\|\ndot\|_v\right)_{v\in M_{K,\infty}}$ is a family of norms, where $\|\ndot\|_v$ is a norm over $E\otimes_{\O_K,v}\C$ which is invariant under the action of $\gal(\C/K_v)$. We consider a complex place and its conjugation as two different places.
  \end{itemize}

If for all $v\in M_{K,\infty}$, the norms $\left(\|\ndot\|_v\right)_{v\in M_{K,\infty}}$ are Hermitian, we say that $\E$ is a \textit{Hermitian vector bundle} over $\spec\O_K$. If $\rg_{\O_K}(E)=1$, we say that $\E$ is a \textit{Hermitian line bundle}.

Suppose that $F$ is a sub-$\O_K$-module of $E$. We say that $F$ is a \textit{saturated} sub-$\O_K$-module if $E/F$ is a torsion-free $\O_K$-module.

Let $\E=\left(E,\left(\|\ndot\|_{E,v}\right)_{v\in M_{K,\infty}}\right)$ and $\F=\left(F,\left(\|\ndot\|_{F,v}\right)_{v\in M_{K,\infty}}\right)$ be two Hermitian vector bundles. If $F$ is a saturated sub-$\O_K$-module of $E$ and $\|\ndot\|_{F,v}$ is the restriction of $\|\ndot\|_{E,v}$ over $F\otimes_{\O_K,v}\C$ for every $v\in M_{K,\infty}$, we say that $\F$ is a \textit{sub-Hermitian vector bundle} of $\E$ over $\spec\O_K$.

We say that $\G=\left(G,\left(\|\ndot\|_{G,v}\right)_{v\in M_{K,\infty}}\right)$ is a \textit{quotient Hermitian vector bundle} of $\E$ over $\spec\O_K$, if for every $v\in M_{K,\infty}$, the module $G$ is a projective quotient $\O_K$-module of $E$ and $\|\ndot\|_{G,v}$ is the induced quotient space norm of $\|\ndot\|_{E,v}$.

For simplicity, we will denote by $E_K=E\otimes_{\O_K}K$ below.

\subsection{Arakelov invariants}
Let $\E$ be a Hermitian vector bundle over $\spec\O_K$, and $\{s_1,\ldots,s_r\}$ be a $K$-basis of $E_K$. We will introduce some invariants in Arakelov geometry below.
\subsubsection{Arakelov degree}
The \textit{Arakelov degree} of $\E$ is defined as
\begin{eqnarray*}
  \adeg(\E)&=&-\sum_{v\in M_{K}}[K_v:\Q_v]\log\left\|s_1\wedge\cdots\wedge s_r\right\|_v\\
  &=&\log\left(\#\left(E/\O_Ks_1+\cdots+\O_Ks_r\right)\right)-\frac{1}{2}\sum_{v\in M_{K,\infty}}[K_v:\Q_v]\log\det\left(\langle s_i,s_j\rangle_{v,1\leqslant i,j\leqslant r}\right),
\end{eqnarray*}
where $\left\|s_1\wedge\cdots\wedge s_r\right\|_v$ follows the definition in \cite[2.1.9]{Chen10b} for all $v\in M_{K,\infty}$, and $\langle s_i,s_j\rangle_{v,1\leqslant i,j\leqslant r}$ is the Gram matrix of the basis $\{s_1,\ldots,s_r\}$ with respect to $v\in M_{K,\infty}$. We refer the readers to \cite[2.4.1]{Gillet-Soule91} for a proof of the equivalence of the above two definitions. The Arakelov degree is independent of the choice of the basis $\{s_1,\ldots,s_r\}$ by the product formula (cf. \cite[Chap. III, Proposition 1.3]{Neukirch}). In addition, we define
\[\adeg_n(\E)=\frac{1}{[K:\Q]}\adeg(\E)\]
as the \textit{normalized Arakelov degree} of $\E$, which is independent of the choice of $K$.
\subsubsection{Slope}
Let $\E$ be a non-zero Hermitian vector bundle over $\spec\O_K$, and $\rg(E)$ be the rank of $E$. The \textit{slope} of $\E$ is defined as
\[\wmu(\E):=\frac{1}{\rg(E)}\adeg_n(\E).\]
In addition, we denote by $\wmu_{\max}(\E)$ the maximal value of slopes of all non-zero Hermitian subbundles, and by $\wmu_{\min}(\E)$ the minimal value of slopes of all non-zero Hermitian quotients bundles of $\E$.
\subsubsection{Height of linear maps}
Let $\E$ and $\F$ be two non-zero Hermitian vector bundles over $\spec\O_K$, and $\phi:\; E_K\rightarrow F_K$ be a non-zero homomorphism. The \textit{height} of $\phi$ is defined as \[h(\phi)=\frac{1}{[K:\Q]}\sum_{v\in M_K}\log\|\phi\|_v,\]
where $\|\phi\|_v$ is the operator norm of $\phi_v:E\otimes_KK_v\rightarrow F\otimes_KK_v$ induced by the above linear homomorphism with respect to $v\in M_K$.

We refer the readers to \cite[Appendix A]{BostBour96} for some equalities and inequalities on Arakelov degrees and corresponding heights of homomorphisms.

\subsection{Arithmetic Hilbert-Samuel function}\label{basic setting}
Let $\overline{\mathcal E}$ be a Hermitian vector bundle of rank $n+1$ over $\spec\O_K$, and $\mathbb P(\sE)$ be the projective space which represents the functor from the category of commutative $\O_K$-algebras to the category of sets mapping all $\O_K$-algebra $A$ to the set of projective quotient $A$-module of $\sE\otimes_{\O_K}A$ of rank $1$. Let $\O_{\mathbb P (\sE)}(1)$ (or $\O(1)$ if there is no confusion) be the universal bundle, and we denote by $\O_{\mathbb P (\sE)}(D)$ (or $\O(D)$) the line bundle $\O_{\mathbb P (\sE)}(1)^{\otimes D}$ for simplicity. The Hermitian metrics on $\sE$ induce by quotient of Hermitian metrics (i.e. Fubini-Study metrics) on $\O_{\mathbb P(\sE)}(1)$ which define a Hermitian line bundle $\overline{\O_{\mathbb P(\sE)}(1)}$ on $\mathbb P(\sE)$.

For every $D\in\mathbb N^+$, let
\begin{equation}\label{definition of E_D}
  E_D=H^0\left(\mathbb P(\sE),\O_{\mathbb P (\sE)}(D)\right),
\end{equation} and let $r(n,D)$ be its rank over $\O_K$. In fact, we have
\begin{equation}\label{def of r(n,D)}
  r(n,D)={n+D\choose D}.
\end{equation}
For each $v\in M_{K,\infty}$, we denote by $\|\ndot\|_{v,\sup}$ the norm over $E_{D,v}=E_D\otimes_{\O_K,v}\C$ such that
\begin{equation}\label{definition of sup norm}
  \forall\;s\in E_{D,v},\;\|s\|_{v,\sup}=\sup_{x\in\mathbb P(\sE_K)_v(\C)}\|s(x)\|_{v,\mathrm{FS}},
\end{equation}
where $\|\ndot\|_{v,\mathrm{FS}}$ is the corresponding Fubini-Study norm.

\subsubsection{Metric of John}\label{metric of john}
Next, we introduce the \textit{metric of John}, see \cite{Thompson96} for a systematic introduction of this notion. In general, for a given symmetric convex body $C$, there exists the unique ellipsoid, called \textit{ellipsoid of John}, contained in $C$ with the maximal volume.

For the $\O_K$-module $E_D$ and any place $v\in M_{K,\infty}$, we take the ellipsoid of John of its unit closed ball defined via the norm$\|\ndot\|_{v,\sup}$, and this ellipsoid induces a Hermitian norm, noted by $\|\ndot\|_{v,J}$. For every section $s\in E_{D}$, the inequality
\begin{equation}\label{john norm}
  \|s\|_{v,\sup}\leqslant\|s\|_{v,J}\leqslant\sqrt{r(n,D)}\|s\|_{v,\sup}
\end{equation}
is verified by \cite[Theorem 3.3.6]{Thompson96}.
\subsubsection{Evaluation map}
Let $X$ be an integral closed subscheme of $\mathbb{P}(\mathcal{E}_K)$, and $\mathscr{X}$ be the Zariski closure of $X$ in $\mathbb{P}(\mathcal{E})$. We denote by
\begin{equation}\label{evaluation map}
\eta_{X,D}:\;E_{D,K}=H^0\left(\mathbb{P}(\mathcal{E}_K),\O(D)\right)\rightarrow H^0\left(X,\O_{\mathbb P(\sE_K)}(1)|_X^{\otimes D}\right)
\end{equation}
the \textit{evaluation map} over $X$ induced by the closed immersion of $X$ in $\mathbb P(\sE_K)$. We denote by $F_D$ the largest saturated sub-$\O_K$-module of $H^0\left(\mathscr{X},\O_{\mathbb P(\sE)}(1)|_\mathscr{X}^{\otimes D}\right)$ such that $F_{D,K}=\im(\eta_{X,D})$. When the integer $D$ is large enough, the homomorphism $\eta_{X,D}$ is surjective, which means $F_D=H^0(\mathscr{X},\O_{\mathbb P(\sE)}(1)|_\mathscr{X}^{\otimes D})$.

The $\O_K$-module $F_D$ is equipped with the quotient metrics (from $\E_D$) such that $F_D$ is a Hermitian vector bundle over $\spec \O_K$, noted by $\F_D$ this Hermitian vector bundle.
\begin{defi}[Arithmetic Hilbert-Samuel function]\label{arithmetic hilbert function}
Let $\F_D$ be the Hermitian vector bundle over $\spec\O_K$ defined above from the map \eqref{evaluation map}. We say that the function which maps the positive integer $D$ to $\wmu(\F_D)$ is the \textit{arithmetic Hilbert-Samuel function} of $X$ with respect to the Hermitian line bundle $\overline{\O(1)}$.
\end{defi}

\subsection{Height of rational points}
In this part, we will define a height function of rational points by Arakelov geometry.

Let $\overline {\mathcal E}$ be a Hermitian vector bundle of rank $n+1$ over $\spec\O_K$, $P\in \mathbb P(\mathcal E_K)(K)$, and $\mathcal P\in\mathbb P(\mathcal E)(\O_K)$ be the Zariski closure of $P$ in $\mathbb P(\mathcal E)$. Let $\overline {\O_{\mathbb P(\mathcal E)}(1)}$ be the universal bundle equipped with the corresponding Fubini-Study metric at each $v\in M_{K,\infty}$, then $\mathcal P^*\overline {\O_{\mathbb P(\mathcal E)}(1)}$ is a Hermitian vector bundle over $\spec\O_K$. We define the \textit{height} of the rational point $P$ with respect to $\overline {\O_{\mathbb P(\mathcal E)}(1)}$ as
\begin{equation}\label{arakelov height}
  h_{\overline {\O_{\mathbb P(\mathcal E)}(1)}}(P)=\adeg_n\left(\mathcal P^*\overline {\O_{\mathbb P(\mathcal E)}(1)}\right).
\end{equation}

We keep all the above notations. We choose
\begin{equation}\label{l^2-sE}
\overline{\sE}=\left(\O_K^{\oplus(n+1)},\left(\|\ndot\|_v\right)_{v\in M_{K,\infty}}\right),
\end{equation}
 where for every $v\in M_{K,\infty}$, $\|\ndot\|_v$ is the $\ell^2$-norm mapping $(t_0,\ldots,t_n)$ to $\sqrt{|v(t_0)|^2+\cdots+|v(t_n)|^2}$. In this case, we use the notations $\mathbb P^n_K=\mathbb P(\mathcal E_K)$ and $\mathbb P^n_{\O_K}=\mathbb P(\mathcal E)$ for simplicity. We suppose that $P$ has a $K$-rational projective coordinate $[x_0:\cdots:x_n]$, then we have (cf. \cite[Proposition 9.10]{Moriwaki-book})
\begin{eqnarray*}
  h_{\overline {\O_{\mathbb P(\mathcal E)}(1)}}(P)&=&\sum\limits_{v\in M_{K,f}}\frac{[K_v:\Q_v]}{[K:\Q]}\log \left(\max\limits_{1\leqslant i\leqslant n}|x_i|_v\right)\\
  & &\;\;+\frac{1}{2}\sum\limits_{v\in M_{K,\infty}}\frac{[K_v:\Q_v]}{[K:\Q]}\log\left(\sum\limits_{j=0}^n|v(x_j)|^2\right).
\end{eqnarray*}
\begin{rema}
  We compare the logarithmic height $h(\ndot)$ defined in \eqref{log height} and the height $h_{\overline {\O_{\mathbb P(\mathcal E)}(1)}}(\ndot)$ defined in \eqref{arakelov height} by Arakelov geometry, where $\overline{\sE}$ is defined in \eqref{l^2-sE}. In fact, by an elementary calculation, the inequality
  \[\left|h(P)-h_{\overline {\O_{\mathbb P(\mathcal E)}(1)}}(P)\right|\leqslant\frac{1}{2}\log(n+1)\]
  is uniformly verified for all $P\in\mathbb P^n_K(K)$.
\end{rema}
Let $\psi:X\hookrightarrow\mathbb P^n_K$ be a projective scheme, and $P\in X(K)$. We define the height of $P$ as $h_{\overline {\O_{\mathbb P(\mathcal E)}(1)}}(\psi(P))$. We will just use the notation $h_{\overline {\O_{\mathbb P(\mathcal E)}(1)}}(P)$ or $h(P)$ if there is no confusion of the morphism $\psi$ and the Hermitian line bundle $\overline{\O_{\mathbb P(\mathcal E)}(1)}$.
\subsection{Height functions of arithmetic varieties}
In this part, we will introduce several height functions of arithmetic varieties, which evaluate their arithmetic complexities.
\subsubsection{Arakelov height}
First, we will introduce a kind of height functions of arithmetic varieties defined by the arithmetic intersection theory developped by Gillet and Soul\'e in \cite{Gillet_Soule-IHES90}, which is first introduced by Faltings in \cite[Definition 2.5]{Faltings91}, see also \cite[III.6]{Soule92}.
\begin{defi}[Arakelov height]\label{arakelov height of projective variety}
Let $K$ be a number field, $\O_K$ be its ring of integers, $\overline{\sE}$ be a Hermitian vector bundle of rank $n+1$ over $\spec\O_K$, and $\overline {\mathcal L}$ be a Hermitian line bundle over $\mathbb P(\sE)$. Let $X$ be a pure dimensional closed subscheme of $\mathbb P(\sE_K)$ of dimension $d$, and $\mathscr X$ be the Zariski closure of $X$ in $\mathbb P(\sE)$. The \textit{Arakelov height} of $X$ is defined as the arithmetic intersection number
\begin{equation*}
  \frac{1}{[K:\Q]}\adeg\left(\widehat{c}_1(\overline{\mathcal{L}})^{d+1}\cdot[\mathscr X]\right),
\end{equation*}
where $\widehat{c}_1(\overline{\mathcal{L}})$ is the arithmetic first Chern class of $\overline{\mathcal L}$ (cf. \cite[Chap. III.4, Proposition 1]{Soule92} for its definition). This height is noted by $h_{\overline{\mathcal{L}}}(X)$ or $h_{\overline{\mathcal{L}}}(\mathscr X)$.
\end{defi}
\begin{rema}\label{definition of arakelov height}
  With all the notations in Definition \ref{arithmetic hilbert function} and Definition \ref{arakelov height of projective variety}. By \cite[Th\'eor\`eme A]{Randriam06}, we have
  \[h_{\overline{\O(1)}}(X)=\lim_{D\rightarrow+\infty}\frac{\adeg_n(\F_D)}{D^{d+1}/(d+1)!}.\]

\end{rema}
\subsubsection{Heights of hypersurfaces}
Let $X$ be a hypersurface in $\mathbb P^n_K$ of degree $\delta$. By \cite[Proposition 7.6 (d), Chap. I]{GTM52}, $X$ is define by a homogeneous polynomial of degree $\delta$. We define a height function of hypersurfaces by considering its polynomial of definition.
\begin{defi}[Naive height]\label{classic height of hypersurface}
Let
\[f(T_0,\ldots,T_n)=\sum\limits_{(i_0,\ldots,i_n)\in\mathbb N^{n+1}}a_{i_0,\ldots,i_n}T_0^{i_0}\cdots T_n^{i_n}\in K[T_0,\ldots,T_n].\]
 We define the \textit{naive height} of $f(T_0,\ldots,T_n)$ as
\[H_K(f)=\prod_{v\in M_K}\max\limits_{(i_0,\ldots,i_n)\in\mathbb N^{n+1}}\left\{|a_{i_0,\ldots,i_n}|_v\right\}^{[K_v:\Q_v]},\]
and \[h(f)=\frac{1}{[K:\Q]}\log H_K(f).\] In addition, if $f(T_0,\ldots,T_n)$ is homogeneous and defines the hypersurface $X\hookrightarrow\mathbb P^n_K$,  we define the \textit{naive height} of $X$ as
\[H_K(X)=H_K(f)\hbox{ and }h(X)=h(f).\]
\end{defi}
\subsubsection{Comparison of height functions}
In order to compare $h_{\overline{\O(1)}}(\mathscr X)$ and $h(X)$ for a hypersurface $X$, we refer the following result in \cite{Liu-reduced}.
\begin{prop}\label{comparing heights}
  Let $X$ be a hypersurface in $\mathbb P^n_K$ of degree $\delta$. With all the notations above, we have
  \begin{eqnarray*}
    -\delta\left(\frac{1}{2\delta}\log\left((n+1)(\delta+1)\right)+\frac{1}{2}\mathcal H_n\right)&\leqslant& h(X)-h_{\overline{\O(1)}}(\mathscr X)\\
    &\leqslant&\delta\left(\log 2+5\log(n+1)-\frac{1}{2}\mathcal H_n\right),
  \end{eqnarray*}
  where $\mathcal H_n=1+\cdots+\frac{1}{n}$.
\end{prop}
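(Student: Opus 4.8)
The plan is to translate the comparison into a statement about a defining polynomial of $X$ by means of arithmetic intersection theory, which reduces the two‑sided estimate to an archimedean inequality of Mahler‑measure type.

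First I would fix a homogeneous polynomial $f=\sum_{|I|=\delta}a_IT^I\in\O_K[T_0,\dots,T_n]$ defining $X$, viewed as a global section of $\O_{\mathbb P(\sE)}(\delta)$ on $\mathbb P^n_{\O_K}$. Its divisor splits as $\mathrm{div}(f)=[\mathscr X]+\sum_{\p}v_{\p}(\mathfrak{c})\,[\mathbb P^n_{\kappa(\p)}]$, where $\mathfrak{c}$ is the content ideal of $f$, and $\widehat{c}_1(\overline{\O(\delta)})=\delta\,\widehat{c}_1(\overline{\O(1)})$ is represented by $(\mathrm{div}(f),-\log\|f\|^2)$. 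Cutting $n$ times with $\widehat{c}_1(\overline{\O(1)})$ and taking arithmetic degrees (that is, applying the projection formula together with arithmetic B\'ezout), the vertical fibres contribute $\log N(\mathfrak{c})=-\sum_{v\in M_{K,f}}[K_v:\Q_v]\log\max_I|a_I|_v$, the Green function contributes $-\sum_{v\in M_{K,\infty}}[K_v:\Q_v]\,m_v(f)$ with $m_v(f):=\int_{\mathbb P^n(\C)}\log\|f\|_{v,\mathrm{FS}}\,c_1(\overline{\O(1)})^n$, and the remaining term is an explicit quantity $c_{n,\delta}$ built from $h_{\overline{\O(1)}}(\mathbb P^n_{\O_K})$. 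Putting this together with $h(X)=h(f)=\frac1{[K:\Q]}\sum_{v\in M_K}[K_v:\Q_v]\log\max_I|a_I|_v$, the finite places cancel and one is left with
\[
h(X)-h_{\overline{\O(1)}}(\mathscr X)=c_{n,\delta}+\frac1{[K:\Q]}\sum_{v\in M_{K,\infty}}[K_v:\Q_v]\bigl(\log\max_I|a_I|_v-m_v(f)\bigr),
\]
where $m_v(f)=\int_{S^{2n+1}}\log|f|\,\mathrm{d}\sigma$ is the geometric mean of $|f|$ over the unit sphere of the $\ell^2$-norm, the factor $\|x\|^{-\delta}$ integrating to $0$ there.

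It then remains to bound $\log\max_I|a_I|_v-m_v(f)$ two‑sidedly at each archimedean place by explicit constants depending only on $n$ and $\delta$, and to insert the known value of $h_{\overline{\O(1)}}(\mathbb P^n_{\O_K})$. The easy direction is $m_v(f)\leqslant\log\max_I|a_I|_v$: the monomials are orthogonal on $S^{2n+1}$ with $\int_{S^{2n+1}}|x^I|^2\,\mathrm{d}\sigma=n!\,I!/(n+\delta)!\leqslant\binom{n+\delta}{\delta}^{-1}$, hence $\|f\|_{L^2(S)}^2=\sum_I|a_I|^2\,n!\,I!/(n+\delta)!\leqslant\max_I|a_I|^2$, and Jensen's inequality gives $m_v(f)=\int_S\log|f|\leqslant\log\|f\|_{L^2(S)}\leqslant\log\max_I|a_I|_v$. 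The hard direction is the Mahler‑type lower bound $m_v(f)\geqslant\log\max_I|a_I|_v-c(n,\delta)$, which I would split in two: the same orthogonality yields $\max_I|a_I|_v\leqslant\sqrt{(n+1)\cdots(n+\delta)}\,\|f\|_{v,\sup}$; and then, the crux, a bound $m_v(f)\geqslant\log\|f\|_{v,\sup}-c'(n,\delta)$ obtained by estimating the $\sigma$-measure of the sublevel sets $\{x\in S^{2n+1}:|f(x)|<\varepsilon\|f\|_{v,\sup}\}$ by a fixed power of $\varepsilon$ (a Remez/Cartan‑type inequality, using that $\log|f|$ is plurisubharmonic and $\deg f=\delta$), so that $\int_S\bigl(-\log(|f|/\|f\|_{v,\sup})\bigr)\,\mathrm{d}\sigma$ is controlled by an explicit multiple of $\delta$ times logarithmic factors in $n$.

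The main obstacle is this last sublevel‑set estimate and the attendant bookkeeping of constants: the Remez exponent and the auxiliary inequalities must be chosen so that, after adding the two elementary steps above and combining with $c_{n,\delta}$, the result collapses to exactly $\tfrac{1}{2\delta}\log((n+1)(\delta+1))+\tfrac12\mathcal H_n$ on the left and $\log2+5\log(n+1)-\tfrac12\mathcal H_n$ on the right. Everything else — the arithmetic‑intersection identity and the $L^2$-orthogonality computations — is routine. This is the comparison established in \cite{Liu-reduced}.
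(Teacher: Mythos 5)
Your proposal ultimately rests on the same justification as the paper: the paper's entire proof is the observation that the Chow form of a hypersurface is its defining polynomial, followed by a citation of \cite[Proposition 3.7]{Liu-reduced}, which is exactly where your last sentence lands. The three paragraphs preceding that sentence are therefore not a reconstruction of the paper's argument but an attempt to reprove the cited reference from scratch. As such a reconstruction, your skeleton is the standard and correct one: writing $\widehat{c}_1(\overline{\O(\delta)})$ as $(\mathrm{div}(f),-\log\|f\|^2)$, applying arithmetic B\'ezout so that the finite places cancel against the content ideal, and reducing to a two-sided archimedean comparison of $\log\max_I|a_I|_v$ with the spherical integral $\int_{S^{2n+1}}\log|f|\,\mathrm{d}\sigma$; the $L^2$-orthogonality computation and the Jensen step for the upper bound are correct as written.

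However, taken as a self-contained proof the proposal has a genuine gap, and you name it yourself: the lower bound $\int_{S^{2n+1}}\log(|f|/\|f\|_{v,\sup})\,\mathrm{d}\sigma\geqslant-c'(n,\delta)$ via a Remez/sublevel-set estimate is only announced, not carried out, and nothing in the sketch shows that the resulting constant, combined with $c_{n,\delta}$ (which you also leave unevaluated --- it is built from $h_{\overline{\O(1)}}(\mathbb P^n_{\O_K})$ and the sphere-versus-Fubini--Study normalization, and this is where the $\tfrac12\mathcal H_n$ terms actually originate), collapses to the specific bounds $\tfrac{1}{2\delta}\log((n+1)(\delta+1))+\tfrac12\mathcal H_n$ and $\log 2+5\log(n+1)-\tfrac12\mathcal H_n$. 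That bookkeeping is precisely the content of \cite[Proposition 3.7]{Liu-reduced}, so either you invoke the reference (as the paper does, in which case the preceding analysis is unnecessary) or you must actually supply the sublevel-set estimate and track every constant; in its present form the proposal does neither.
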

\begin{proof}
  Since $X$ is a hypersurface, the Chow variety of $X$ is just $X$ itself. Then we have the result from \cite[Proposition 3.7]{Liu-reduced} directly after some elementary calculations.
\end{proof}
\section{Global determinant method for hypersurfaces}
In the rest part of this article, unless specially mentioned, we suppose that $X$ is an integral hypersurface in $\mathbb P^n_K$ , and $\mathscr X$ is its Zariski closure in $\mathbb P^n_{\O_K}$. In fact, $X\rightarrow\spec K$ is the generic fiber of $\mathscr X\rightarrow\spec\O_K$. When we consider the height $h(P)$ of a rational point $P\in X(K)$ embedded into $\mathbb P^n_K$, we use the definition in \eqref{arakelov height} by Arakelov geometry. Let $\p$ be a maximal ideal of $\O_K$, and we denote by $\mathscr X_\p=\mathscr X\times_{\spec\O_K}\spec\f_\p\rightarrow\f_\p$ the fiber at $\p$.

Let $r_1(n,D)$ be the rank of $F_D$ over $\O_K$, where $F_D$ is defined in \S \ref{basic setting}. For the case where $X$ is a hypersurface of degree $\delta$ in $\mathbb P^n_K$, we have
\[r_1(n,D)={n+D\choose n}-{n+D-\delta\choose n}.\]

 Our main target of this section is to prove the following result.
\begin{theo}\label{global determinant}
  We keep all the notations in \S \ref{basic setting} and this section. Let $X$ be a closed integral subscheme in $\mathbb P^n_k$, and $\mathscr X$ be its Zariski closure in $\mathbb P^n_{\O_K}$. Let $\{\p_j\}_{j\in J}$ be a finite family of maximal ideals of $\O_K$, and $\{P_i\}_{i\in I}$ be a family of rational points of $X$. For a fixed prime ideal $\p$ of $\O_K$, let $\mu_\xi(\mathscr X_\p)$ be the multiplicity of the point $\xi$ in $\mathscr X_\p$, and we denote $n(\mathscr X_\p)=\sum\limits_{\xi\in\mathscr X(\f_\p)}\mu_\xi(\mathscr X_\p)$. If the inequality
  \begin{eqnarray}
    & &\;\;\sup_{i\in I}h(P_i)<\frac{\wmu(\overline{F}_D)}{D}-\frac{\log r_1(n,D)}{2D}\\
    & &+\frac{1}{[K:\Q]}\sum_{j\in J}\left(\frac{(n-1)!^{\frac{1}{n-1}}(n-1)r_1(n,D)^\frac{1}{n-1}}{nDn(\mathscr X_{\p_j})^\frac{1}{n-1}}-\frac{n^3+2n^2+n-4}{2Dn(n+1)}\right)\log N(\p_j)\nonumber
  \end{eqnarray}
  is verified, then there exists a section $s\in E_{D,K}$, which contains $\{P_i\}_{i\in I}$ but does not contain the generic point of $X$. In other words, $\{P_i\}_{i\in I}$ can be covered by a hypersurfaces of degree $D$ which does not contain the generic point of $X$.
\end{theo}
\subsection{Auxiliary results}
We refer to some results in \cite{Chen1,Chen2}, which are used in the reformulation of the determinant method by Arakelov geometry. We will also prove a new auxiliary lemma.
\begin{prop}[\cite{Chen1}, Proposition 2.2]\label{slope of evaluation map}
  Let $\overline E$ be a Hermitian vector bundle of rank $r>0$ over $\spec\O_K$, and $\{\overline L_i\}_{i\in I}$ be a family of Hermitian line bundles over $\spec\O_K$. If \[\phi:\; E_K\rightarrow\bigoplus\limits_{i\in I}L_{i,K}\] is an injective homomorphism, then there exists a subset $I_0$ of $I$ whose cardinality is $r$ such that the following equality
\[\wmu(\E)=\frac{1}{r}\left(\sum_{i\in I_0}\wmu(\overline L_i)+h\left(\wedge^r\left(\pr_{I_0}\circ\phi\right)\right)\right)\]
is verified, where $\pr_{I_0}:\;\bigoplus\limits_{i\in I}L_{i,K}\rightarrow\bigoplus\limits_{i\in I_0}L_{i,K}$ is the canonical projection.
\end{prop}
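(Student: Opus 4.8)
The plan is to establish the identity by a direct slope computation, once the subset $I_0$ has been chosen so that $\pr_{I_0}\circ\phi$ is not merely a surjection onto a coordinate subspace but an honest isomorphism of $K$-vector spaces. After that is arranged, the equality will follow purely from the functorial properties of the Arakelov degree (additivity under orthogonal direct sums and under tensor products of line bundles, and $\adeg_n(\det\overline E)=\adeg_n(\overline E)$) together with the relation between the height of an isomorphism of Hermitian line bundles and the Arakelov degrees of its source and target, as recorded in \cite[Appendix A]{BostBour96}.

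First I would reduce to the case where $I$ is finite: the image $\phi(E_K)$ is an $r$-dimensional $K$-subspace of $\bigoplus_{i\in I}L_{i,K}$, hence lies in $\bigoplus_{i\in I'}L_{i,K}$ for some finite $I'\subseteq I$, and for any $I_0\subseteq I'$ the composite $\pr_{I_0}\circ\phi$ is insensitive to whether the ambient sum is indexed by $I$ or by $I'$; so we may assume $I$ itself is finite. Next, choosing a $K$-basis $(s_1,\dots,s_r)$ of $E_K$ and a generator $e_i$ of each $L_{i,K}$, the matrix $C=(c_{ij})$ defined by $\phi(s_j)=\sum_{i\in I}c_{ij}e_i$ has rank $r$ because $\phi$ is injective; hence there is a subset $I_0\subseteq I$ with $\#I_0=r$ such that the square submatrix $(c_{ij})_{i\in I_0,\,1\le j\le r}$ is invertible. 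For this $I_0$ the composite $\psi:=\pr_{I_0}\circ\phi\colon E_K\to\bigoplus_{i\in I_0}L_{i,K}$ is an isomorphism of $K$-vector spaces.

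The core of the argument is then the following. Since $\psi$ is an isomorphism, so is $\wedge^r\psi\colon\det E_K\to\bigwedge^r\bigl(\bigoplus_{i\in I_0}L_{i,K}\bigr)=\bigotimes_{i\in I_0}L_{i,K}$, viewed as a homomorphism between the Hermitian line bundles $\det\overline E$ and $\bigotimes_{i\in I_0}\overline L_i$, the metrics on the target being those induced from the orthogonal direct sum $\bigoplus_{i\in I_0}\overline L_i$. Applied to the generator $s_1\wedge\cdots\wedge s_r$ of $\det E_K$, the map $\wedge^r\psi$ sends it to $\det\bigl((c_{ij})_{i\in I_0,j}\bigr)\cdot\bigotimes_{i\in I_0}e_i$ (up to a sign, immaterial here), and since $\det\bigl((c_{ij})_{i\in I_0,j}\bigr)\in K^\times$, the product formula turns the defining sum of $h(\wedge^r\psi)$ into
\[h(\wedge^r\psi)=\adeg_n\bigl(\det\overline E\bigr)-\adeg_n\Bigl(\bigotimes_{i\in I_0}\overline L_i\Bigr).\]
Now $\adeg_n(\det\overline E)=\adeg_n(\overline E)=r\,\wmu(\overline E)$, and by additivity of the Arakelov degree under tensor products of line bundles, $\adeg_n\bigl(\bigotimes_{i\in I_0}\overline L_i\bigr)=\sum_{i\in I_0}\adeg_n(\overline L_i)=\sum_{i\in I_0}\wmu(\overline L_i)$. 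Substituting and dividing by $r$ yields exactly $\wmu(\overline E)=\tfrac1r\bigl(\sum_{i\in I_0}\wmu(\overline L_i)+h(\wedge^r\psi)\bigr)$.

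There is no serious obstacle here; the two points demanding care are (i) that $I_0$ must be chosen so that $\pr_{I_0}\circ\phi$ is an isomorphism — for any other choice $\wedge^r(\pr_{I_0}\circ\phi)$ is the zero map and its height is undefined, which is precisely why the statement asserts the existence of a suitable $I_0$ rather than its validity for all of them — and (ii) fixing the sign and normalization in the height formula for isomorphisms of Hermitian line bundles, which I would pin down on the rank-one model $\overline{\O_K}\to(\O_K,(\lambda_v\,\|\cdot\|)_{v\in M_{K,\infty}})$, where the identity map has height $\tfrac1{[K:\Q]}\sum_{v\in M_{K,\infty}}[K_v:\Q_v]\log\lambda_v$, equal to minus the normalized Arakelov degree of the target. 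I would also note that, by the computation above, the displayed identity in fact holds simultaneously for every $I_0$ of cardinality $r$ for which $\pr_{I_0}\circ\phi$ is invertible, so there is genuine freedom in the choice of $I_0$ — a freedom that is exploited later when this proposition is applied to the evaluation map.
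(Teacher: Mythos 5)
Your proof is correct; since the paper merely cites this result from \cite{Chen1} without reproving it, there is no in-text proof to compare against, but your argument — choose $I_0$ so that $\pr_{I_0}\circ\phi$ becomes an isomorphism, then read off the identity from $h(\wedge^r\psi)=\adeg_n(\det\overline E)-\adeg_n\bigl(\bigotimes_{i\in I_0}\overline L_i\bigr)$ together with $\adeg_n(\det\overline E)=r\,\wmu(\overline E)$ and the additivity of $\adeg_n$ under tensor product of Hermitian line bundles — is precisely the standard proof of \cite[Proposition 2.2]{Chen1}. Your sign check on the rank-one model is right, and the remark that the identity holds for every $I_0$ making $\pr_{I_0}\circ\phi$ invertible is a correct and useful observation.
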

In order to benefit the readers, we will provide the details on the construction of certain local homomorphisms, which are introduced in \cite[Lemma 2.4]{Salberger07}, see also \cite[\S 3.2]{Chen2}.

Let $X$ be an integral closed subscheme of $\mathbb P^n_K$ and $\mathscr X$ be the Zariski closure of $X$ in $\mathbb P^n_{\O_K}$. Let $\p$ be a maximal ideal of $\O_K$ and $\xi\in\mathscr X(\f_\p)$. In this case, $\O_{\mathscr X,\xi}$ is a local algebra over $\O_{K,\p}$. Let $(f_i)_{1\leqslant i\leqslant m}$ be a family of local homomorphisms of $\O_{K,\p}$-algebras from $\O_{\mathscr X,\xi}$ to $\O_{K,\p}$.

Let $E$ be a free sub-$\O_{K,\p}$-module of finite type of $\O_{\mathscr X,\xi}$ and let $f$ be the $\O_{K,\p}$-linear homomorphism
\[(f_i|_E)_{1\leqslant i\leqslant m}:E\rightarrow\O_{K,\p}^{\oplus m}.\]
 Since $f_1$ is a local homomorphism of $\O_{K,\p}$-algebras, it must be surjective. Let $\mathfrak a$ be the kernel of $f_1$, then we have $\O_{\mathscr X,\xi}/\mathfrak a\cong\O_{K,\p}$. Furthermore, since $\O_{\mathscr X,\xi}$ is a local ring and we suppose that $\sm_\xi$ is its maximal ideal, then we have $\sm_\xi\supseteq\mathfrak a$. Moreover, since $f_1$ is a local homomorphism, we have $\mathfrak a+\p\O_{\mathscr X,\xi}=\sm_\xi$. For each $j\in\mathbb N$, $\mathfrak a^j/\mathfrak a^{j+1}$ is an $\O_{\mathscr X,\xi}/\mathfrak a\cong\O_{K,\p}$-module of finite type.

In order to estimate its rank, we need the following result.
 \begin{lemm}\label{isomorphism a/a m/m}
   With all the above notations and constructions, we have
   \begin{eqnarray*}\f_\p\otimes_{\O_{K,\p}}(\mathfrak a^j/\mathfrak a^{j+1})&\cong&\left(\mathfrak a/\mathfrak a\cap \p\O_{\mathscr X,\xi}\right)^j/\left(\mathfrak a/\mathfrak a\cap\p\O_{\mathscr X,\xi}\right)^{j+1}\\
   &\cong&\left(\sm_\xi/\sm_\xi\cap\p\O_{\mathscr X,\xi}\right)^j/\left(\sm_\xi/\sm_\xi\cap\p\O_{\mathscr X,\xi}\right)^{j+1}.\end{eqnarray*}
 \end{lemm}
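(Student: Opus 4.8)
The plan is to reduce the statement to a purely algebraic claim about the local ring $\O_{\mathscr X,\xi}$ and then prove it by a Nakayama-style argument together with a flatness observation. The content of Lemma \ref{isomorphism a/a m/m} is that reducing modulo $\p$ commutes with the symbolic filtration data attached to $\mathfrak a$, and that after this reduction the graded pieces of the $\mathfrak a$-adic filtration coincide with those of the maximal-ideal-adic filtration on the fiber. The first isomorphism is a base-change statement; the second uses the earlier remark that $\mathfrak a + \p\O_{\mathscr X,\xi} = \sm_\xi$, so that $\mathfrak a$ and $\sm_\xi$ have the same image in $\O_{\mathscr X_\p,\xi} = \O_{\mathscr X,\xi}/\p\O_{\mathscr X,\xi}$.

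First I would establish the second isomorphism, which is the easier half. Write $R = \O_{\mathscr X,\xi}$, $\bar R = R/\p R = \O_{\mathscr X_\p,\xi}$, and denote by $\bar{\mathfrak a}$, $\bar{\sm}$ the images of $\mathfrak a$ and $\sm_\xi$ in $\bar R$. From $\mathfrak a + \p R = \sm_\xi$ we get $\bar{\mathfrak a} = \bar{\sm}$ as ideals of $\bar R$. Therefore $\bar{\mathfrak a}^j = \bar{\sm}^j$ for all $j$, and passing to the successive quotients gives $\bar{\mathfrak a}^j/\bar{\mathfrak a}^{j+1} \cong \bar{\sm}^j/\bar{\sm}^{j+1}$, which is exactly
\[
\left(\mathfrak a/\mathfrak a\cap \p R\right)^j/\left(\mathfrak a/\mathfrak a\cap\p R\right)^{j+1}\cong\left(\sm_\xi/\sm_\xi\cap\p R\right)^j/\left(\sm_\xi/\sm_\xi\cap\p R\right)^{j+1},
\]
once one notes $\mathfrak a/(\mathfrak a \cap \p R) \cong \bar{\mathfrak a}$ and likewise for $\sm_\xi$. (One should be a little careful that $\bar{\mathfrak a}^j$, the $j$-th power of the image, equals the image of $\mathfrak a^j$; this is automatic since taking powers commutes with surjections of rings.)

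Then I would prove the first isomorphism $\f_\p \otimes_{\O_{K,\p}} (\mathfrak a^j/\mathfrak a^{j+1}) \cong \bar{\mathfrak a}^j/\bar{\mathfrak a}^{j+1}$. The key point is that $R/\mathfrak a \cong \O_{K,\p}$, so $R/\mathfrak a$ is $\O_{K,\p}$-flat (indeed free of rank one); hence tensoring the exact sequence $0 \to \mathfrak a^{j+1} \to \mathfrak a^j \to \mathfrak a^j/\mathfrak a^{j+1} \to 0$ with $\f_\p = \O_{K,\p}/\p$ over $\O_{K,\p}$, and comparing with the $\bar R$-side, comes down to checking that $\mathfrak a^j \otimes_{\O_{K,\p}} \f_\p$ surjects onto $\bar{\mathfrak a}^j = (\mathfrak a R + \p R)^j/\p R$ with the expected kernel. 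Concretely: $\mathfrak a^j/\mathfrak a^{j+1}$ is a finitely generated module over $R/\mathfrak a \cong \O_{K,\p}$, a DVR, so it is a direct sum of a free part and a torsion part; reducing mod $\p$ and matching against $\bar{\mathfrak a}^j/\bar{\mathfrak a}^{j+1}$ shows the torsion part must vanish — this is where one genuinely uses the geometric input, namely that $\xi$ lies on the special fiber and that the filtration on the fiber is controlled by the local Hilbert-Samuel function. I would phrase this as: the natural map $\mathfrak a^j/(\mathfrak a^{j+1}+\p\mathfrak a^j) \to \bar{\mathfrak a}^j/\bar{\mathfrak a}^{j+1}$ is an isomorphism because both count the same thing after one checks $\mathfrak a^{j+1} + \p R \cap \mathfrak a^j = \mathfrak a^{j+1} + \p\mathfrak a^j$, an Artin–Rees / flatness statement.

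\textbf{The main obstacle} I anticipate is precisely that last compatibility — controlling the intersection $\mathfrak a^j \cap \p R$ inside $R$, i.e. showing that no ``extra'' $\p$-torsion is created when passing between the $\mathfrak a$-adic filtration and its reduction mod $\p$. In a general local $\O_{K,\p}$-algebra this can fail, and the reason it works here is the very strong hypothesis $R/\mathfrak a \cong \O_{K,\p}$ (so $\mathfrak a$ is, up to the section $f_1$, a ``coordinate hyperplane'' through $\xi$ that is flat over the base), together with $\mathfrak a + \p R = \sm_\xi$. I would isolate this as the technical heart of the argument, prove it via the freeness of $R/\mathfrak a$ over the DVR $\O_{K,\p}$ and a short exact sequence chase (or equivalently by noting $\mathrm{gr}_{\mathfrak a}(R)$ is flat over $\O_{K,\p}$), and then the two displayed isomorphisms follow formally. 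Everything else is bookkeeping about surjections of graded rings.
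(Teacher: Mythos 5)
Your handling of the second isomorphism matches the paper: proving $\mathfrak a + \sm_\xi\cap\p\O_{\mathscr X,\xi} = \sm_\xi$ is the same observation as $\mathfrak a$ and $\sm_\xi$ having the same image in $\O_{\mathscr X,\xi}/\p\O_{\mathscr X,\xi}$, after which one passes to powers. You are also right that the first isomorphism needs an argument --- the paper dismisses it as ``by definition'', but the natural map $\f_\p\otimes_{\O_{K,\p}}(\mathfrak a^j/\mathfrak a^{j+1})=\mathfrak a^j/(\mathfrak a^{j+1}+\p\mathfrak a^j)\twoheadrightarrow\mathfrak a^j/\bigl(\mathfrak a^{j+1}+\mathfrak a^j\cap\p\O_{\mathscr X,\xi}\bigr)$ is a priori only a surjection, and injectivity is exactly the inclusion $\mathfrak a^j\cap\p\O_{\mathscr X,\xi}\subseteq\mathfrak a^{j+1}+\p\mathfrak a^j$ that you isolate. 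You have correctly named the ``technical heart''.

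The gap is that your sketch does not actually establish this inclusion. Artin--Rees only yields $\mathfrak a^j\cap\p\O_{\mathscr X,\xi}\subseteq\p\,\mathfrak a^{j-c}$ for some fixed $c$, which is weaker than needed. Freeness of $\O_{\mathscr X,\xi}/\mathfrak a$ over $\O_{K,\p}$ does settle $j=1$ (tensor $0\to\mathfrak a\to\O_{\mathscr X,\xi}\to\O_{\mathscr X,\xi}/\mathfrak a\to0$ with $\f_\p$ and use flatness of the last term), but it does \emph{not} imply that $\mathrm{gr}_{\mathfrak a}(\O_{\mathscr X,\xi})$ is $\O_{K,\p}$-flat, which is what your parenthetical ``equivalently by noting $\mathrm{gr}_{\mathfrak a}(R)$ is flat'' silently assumes. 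That flatness can fail under the lemma's hypotheses: with $\O_{K,\p}=\mathbb Z_p$, $R=\mathbb Z_p[x,y]_{(p,x,y)}/(y^2-p^2x)$, $\mathfrak a=(x,y)$, and $f_1$ the evaluation at the origin, one has $R/\mathfrak a\cong\mathbb Z_p$ and $\mathfrak a+pR=\sm_\xi$, yet $\mathfrak a/\mathfrak a^2\cong\mathbb Z_p\oplus\mathbb Z_p/p^2$ has $p$-torsion, and a direct count gives $\dim_{\f_p}\bigl(\f_p\otimes_{\mathbb Z_p}(\mathfrak a^2/\mathfrak a^3)\bigr)=3$ while $\dim_{\f_p}\bigl(\overline{\mathfrak a}^2/\overline{\mathfrak a}^3\bigr)=2$, the latter being the degree-$2$ value of the Hilbert--Samuel function of $\f_p[x,y]_{(x,y)}/(y^2)$. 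So the inclusion, and with it the asserted isomorphism, genuinely requires either a further hypothesis ensuring the associated graded ring is $\O_{K,\p}$-flat, or a retreat to the one-sided estimate that the downstream filtration argument actually uses; your sketch, like the paper's ``by definition'', leaves that open.
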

 \begin{proof}
   By definition, we have
   \[\f_\p\otimes_{\O_{K,\p}}(\mathfrak a^j/\mathfrak a^{j+1})\cong\left(\mathfrak a/\mathfrak a\cap \p\O_{\mathscr X,\xi}\right)^j/\left(\mathfrak a/\mathfrak a\cap\p\O_{\mathscr X,\xi}\right)^{j+1}.\]

  Next, from the facts $\mathfrak a+\p\O_{\mathscr X,\xi}=\sm_\xi$ and $\mathfrak a\subseteq\sm_\xi$, we claim that \[\mathfrak a+\sm_{\xi}\cap\p\O_{\mathscr X,\xi}=\sm_\xi\cap(\mathfrak a+\p\O_{\mathscr X,\xi})=\sm_\xi\] is verified. In fact, for every $x\in\sm_\xi=\sm_\xi\cap(\mathfrak a+\p\O_{\mathscr X,\xi})$, there exist $v\in\mathfrak a$ and $w\in \p\O_{\mathscr X,\xi}$, such that $x=v+w$. Then $w=x-v\in\sm_\xi$. Since $v,x\in\sm_\xi$, then $w\in\sm_\xi\cap\p\O_{\mathscr X,\xi}$. So we have $x\in\mathfrak a+\sm_\xi\cap\p\O_{\mathscr X,\xi}$. Conversely, since $\sm_\xi$ is the maximal ideal, then $\mathfrak a+\sm_{\xi}\cap\p\O_{\mathscr X,\xi}\subseteq\sm_\xi$. 

By the above fact, we have \[\mathfrak a/\mathfrak a\cap\p\O_{\mathscr X,\xi}\cong\mathfrak a/\mathfrak a\cap(\sm_\xi\cap\p\O_{\mathscr X,\xi})\cong(\mathfrak a+\sm_\xi\cap\p\O_{\mathscr X,\xi})/\sm_\xi\cap\p\O_{\mathscr X,\xi}\cong\sm_\xi/\sm_\xi\cap\p\O_{\mathscr X,\xi},\] which terminates the proof.
 \end{proof}
By Nakayama's lemma (cf. \cite[Theorem 2.2]{Matsumura}), we deduce that the rank of $\mathfrak a^j/\mathfrak a^{j+1}$ over $O_{K,\p}$ is equal to the rank of $\left(\sm_\xi/\sm_\xi\cap\p\O_{\mathscr X,\xi}\right)^j/\left(\sm_\xi/\sm_\xi\cap\p\O_{\mathscr X,\xi}\right)^{j+1}$ over $\f_\p$ from the isomorphism in Lemma \ref{isomorphism a/a m/m}, which is the value of the local Hilbert-Samuel function $H_\xi(j)$ defined in \eqref{local hilbert of a closed point}.

By this fact, we consider the filtration
\[\O_{\mathscr X,\xi}=\mathfrak a^0\supseteq\mathfrak a\supseteq\mathfrak a^2\supseteq\cdots\supseteq\mathfrak a^j\supseteq\mathfrak a^{j+1}\supseteq\cdots\]
of $\O_{\mathscr X,\xi}$, which induces the filtration
\begin{equation}
  \mathcal F:\;E=E\cap\mathfrak a^0\supseteq E\cap\mathfrak a\supseteq E\cap\mathfrak a^2\supseteq\cdots\supseteq E\cap\mathfrak a^j\supseteq E\cap\mathfrak a^{j+1}\supseteq\cdots
\end{equation}
of $E$ whose $j$-th sub-quotient $E\cap\mathfrak a^j/E\cap \mathfrak a^{j+1}$ is a free $\O_{K,\p}$-module of rank smaller than or equal to $H_\xi(j)$.

We suppose that the reductions of all the above local homomorphisms $f_1,\ldots,f_m$ modulo $\p$ are same, which means all the composed homomorphisms \[\O_{\mathscr X,\xi}\stackrel{f_i}{\longrightarrow}\O_{K,\p}\rightarrow\f_\p\] are the same for every $i =1,\ldots,m$, where the last arrow is the canonical reduction
morphism modulo $\p$. We note $N(\p) = \#\f_\p$. In this case, the restriction of $f$ on $E\cap\mathfrak a^j$ has its norm smaller than $N(\p)^{-j}$. In fact, for any $1\leqslant i\leqslant m$, we have $f_i(\mathfrak a)\subseteq \p\O_{K,\p}$ and hence we have $f_i(\mathfrak a^j)\subseteq \p^j\O_{K,\p}$.

By the above argument, we have the following result from \cite[Lemma 3.2, Lemma 3.3]{Chen2}.

\begin{prop}[\cite{Chen2}, Proposition 3.4]\label{local control by Q_r}
  Let $\p$ be a maximal ideal of $\O_K$ and $\xi\in\mathscr X(\f_\p)$. Suppose that $\{f_i\}_{1\leqslant i\leqslant m}$ is a family of local $\O_{K,\p}$-linear homomorphism from $\O_{\mathscr X,\xi}$ to $\O_{K,\p}$ whose reduction modulo $\p$ are the same. Let $E$ be a free sub-$\O_{K,\p}$-module of finite type of $\O_{\mathscr X,\xi}$ and $f=(f_i|_{E})_{1\leqslant i\leqslant m}$. Then for any integer $r\geqslant1$, we have
  \begin{equation}
    \|\wedge^rf_K\|\leqslant N(\p)^{-Q_\xi(r)},
  \end{equation}
  where $N(\p)=\#(\O_K/\p)$, and $Q_\xi(r)$ is defined in \eqref{Q_r}.
\end{prop}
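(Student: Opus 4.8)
The plan is to combine the filtration $\mathcal F$ of $E$ built just above with the fact that, at the finite place $\p$, the relevant norms are ultrametric. Since each graded piece $E\cap\a^j/E\cap\a^{j+1}$ is a free $\O_{K,\p}$-module, I would first choose an $\O_{K,\p}$-basis $(e_1,\ldots,e_R)$ of $E$ adapted to $\mathcal F$: for every $j\geqslant 0$, those $e_k$ lying in $E\cap\a^j$ should form a basis of $E\cap\a^j$. Writing $\ell(k)$ for the \emph{level} of $e_k$ (the largest $j$ with $e_k\in E\cap\a^j$), the number of indices $k$ with $\ell(k)=j$ then equals $\rg_{\O_{K,\p}}(E\cap\a^j/E\cap\a^{j+1})$, which by Lemma~\ref{isomorphism a/a m/m} together with Nakayama's lemma is at most $H_\xi(j)$.

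Next, because the target norm at $\p$ is ultrametric, $\|\wedge^r f_K\|$ equals the maximum, over all $r$-element subsets $\{k_1<\cdots<k_r\}$ of $\{1,\ldots,R\}$, of $\|f(e_{k_1})\wedge\cdots\wedge f(e_{k_r})\|$; and this quantity is at most $\prod_{s=1}^r\|f(e_{k_s})\|$ by submultiplicativity of norms on exterior products at a non-archimedean place. As recalled above, the restriction of $f$ to $E\cap\a^j$ has norm $\leqslant N(\p)^{-j}$ — this is precisely where the hypothesis that the $f_i$ share a common reduction modulo $\p$ enters, via $f_i(\a^j)\subseteq\p^j\O_{K,\p}$. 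Hence $\|f(e_{k_s})\|\leqslant N(\p)^{-\ell(k_s)}$, so $\|\wedge^r f_K\|\leqslant N(\p)^{-M_r}$ with $M_r=\min\sum_{s=1}^r\ell(k_s)$, the minimum taken over $r$-element subsets of $\{1,\ldots,R\}$.

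It then remains to check $M_r\geqslant Q_\xi(r)$. The minimum defining $M_r$ is attained by taking the $r$ basis vectors of smallest level, so if $\lambda_1\leqslant\lambda_2\leqslant\cdots$ denotes the non-decreasing rearrangement of the multiset of all levels $\ell(1),\ldots,\ell(R)$, then $M_r=\lambda_1+\cdots+\lambda_r$. Since each integer $j$ occurs among the $\lambda$'s at most $H_\xi(j)$ times, whereas in the series $\{q_\xi(m)\}_{m\geqslant 0}$ it occurs exactly $H_\xi(j)$ times, a term-by-term comparison shows that $\lambda_s$ dominates the $s$-th term of $\{q_\xi(m)\}_{m\geqslant 0}$; summing over $s=1,\ldots,r$ bounds $M_r$ below by the corresponding partial sum of the $q$-series, which is $Q_\xi(r)$ in the notation of \eqref{Q_r}. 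Feeding this back into the previous estimate yields the claim. I expect the combinatorial comparison of the $\lambda$-sequence with the $q$-series — using $\rg_{\O_{K,\p}}(E\cap\a^j/E\cap\a^{j+1})\leqslant H_\xi(j)$ in the correct direction and matching the indexing convention of \eqref{Q_r} — to be the only genuinely delicate step, together with the point that operator norms and exterior-product norms behave as described here only because $\p$ is a finite place; the rest is formal, following \cite[Lemma 3.2, Lemma 3.3]{Chen2}.
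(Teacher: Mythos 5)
Your proposal is correct and fills in, in essentially the same way as the cited source, the argument that the paper only sketches before deferring to \cite[Lemma 3.2, Lemma 3.3]{Chen2}: adapted basis for the filtration $\mathcal F$, the ultrametric identity $\|\wedge^r f_K\|=\max\|f(e_{k_1})\wedge\cdots\wedge f(e_{k_r})\|$ over $r$-subsets together with the Hadamard bound, the estimate $\|f|_{E\cap\a^j}\|\leqslant N(\p)^{-j}$, and the rearrangement comparison of levels against the $q$-series using $\rg(E\cap\a^j/E\cap\a^{j+1})\leqslant H_\xi(j)$. One small remark: the conclusion $M_r\geqslant Q_\xi(r)$ requires reading $Q_\xi(r)$ as the sum of the \emph{first $r$} terms $q_\xi(0)+\cdots+q_\xi(r-1)$, which is the convention actually used in the paper's Appendix (e.g.\ in the identity $Q_\xi(U_\xi(k))=\sum_{j\leqslant k}jH_\xi(j)$), even though formula \eqref{Q_r} as literally written lists $r+1$ terms; you flagged this correctly, but be aware that the paper's own display has this off-by-one.
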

The following lemma will be used in the global determinant estimate.
\begin{lemm}\label{product of determinants}
  Let $(K,|\ndot|)$ be a normed field, $E_1,E_2,F_1,F_2$ be four normed vector spaces over $K$, $f_1:E_1\rightarrow F_1$ and $f_2:E_2\rightarrow F_2$ be two $K$-linear isomorphisms. Suppose $\dim_K(E_1)=\dim_K(F_1)=r_1$ and $\dim_K(E_2)=\dim_K(F_2)=r_2$. We equipped \[f_1\oplus f_2: E_1\oplus E_2\rightarrow F_2\oplus F_2\] with the corresponding maximal value norms. Then we have
  \[\left\|\wedge^{r_1+r_2}\left(f_1\oplus f_2\right)\right\|=\|\wedge^{r_1}f_1\|\cdot\|\wedge^{r_2}f_2\|,\]
  where the above $\|\ndot\|$ is the norm of operators.
\end{lemm}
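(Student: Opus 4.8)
The plan is to reduce the statement about the top exterior power of a direct sum to a statement about ordinary determinants by choosing adapted bases, and then to verify the factorization of the determinant of a block-diagonal matrix together with the compatibility of the relevant operator norms. First I would fix orthonormal-type bases: since $E_1, E_2, F_1, F_2$ are merely normed (not necessarily inner-product) spaces, I cannot literally orthonormalize, so instead I would work directly with the definition of the operator norm of $\wedge^{r}$ of a map, namely $\|\wedge^r g\| = \sup\{\,\|g(x_1)\wedge\cdots\wedge g(x_r)\| : \|x_1\wedge\cdots\wedge x_r\|\leqslant 1\,\}$, where the norm on the exterior power is the one induced by the given norm on the source/target in the standard way (e.g. via the construction in \cite[2.1.9]{Chen10b} used elsewhere in the paper). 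The key structural fact is that $\wedge^{r_1+r_2}(E_1\oplus E_2)$ has a canonical decomposition $\bigoplus_{a+b=r_1+r_2}\wedge^a E_1\otimes\wedge^b E_2$, and $\wedge^{r_1+r_2}(f_1\oplus f_2)$ respects this decomposition, acting as $\wedge^a f_1\otimes \wedge^b f_2$ on each summand.

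The crucial observation is that because $f_1$ and $f_2$ are \emph{isomorphisms} with $\dim E_i=\dim F_i=r_i$, every summand with $a>r_1$ or $b>r_2$ vanishes, so only the single term $a=r_1, b=r_2$ survives; hence $\wedge^{r_1+r_2}(f_1\oplus f_2) = \wedge^{r_1}f_1\otimes\wedge^{r_2}f_2$ under the identification $\wedge^{r_1+r_2}(E_1\oplus E_2)\cong\wedge^{r_1}E_1\otimes\wedge^{r_2}E_2$ (both sides being one-dimensional). The statement then becomes the assertion that, under the maximal-value norm on $E_1\oplus E_2$ and on $F_1\oplus F_2$, the induced norm on the one-dimensional space $\wedge^{r_1}E_1\otimes\wedge^{r_2}E_2$ is the tensor product of the induced norms on $\wedge^{r_1}E_1$ and $\wedge^{r_2}E_2$, and likewise for $F$. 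Once this is established, taking the ratio of the norm of a generator of $\wedge^{r_1+r_2}F$ to that of its preimage gives exactly $\|\wedge^{r_1}f_1\|\cdot\|\wedge^{r_2}f_2\|$.

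I would establish the norm compatibility by a direct computation with the definition of the norm on a maximal exterior power of a direct sum. For a vector $e=e_1\oplus e_2$ one has $\|e\|=\max\{\|e_1\|,\|e_2\|\}$; for a wedge of $r_1+r_2$ such vectors, expanding multilinearly and using that the component in $\wedge^{r_1}E_1\otimes\wedge^{r_2}E_2$ is the only nonzero one (since the exterior powers of $E_1$ above degree $r_1$ and of $E_2$ above $r_2$ vanish), the norm of the wedge is controlled by the product of the norms of a basis wedge of $E_1$ and a basis wedge of $E_2$; the reverse inequality comes from choosing the test vectors to be a basis of $E_1$ followed by a basis of $E_2$, which realizes the product exactly. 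This is the standard fact that for a block-diagonal linear map between spaces with $\ell^\infty$-product norms the operator norm on the top exterior power multiplies, and it is ultimately the multiplicativity of determinants of block-triangular matrices.

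The main obstacle is purely bookkeeping: making the identification $\wedge^{r_1+r_2}(E_1\oplus E_2)\cong\wedge^{r_1}E_1\otimes\wedge^{r_2}E_2$ and the induced-norm computation precise in the generality of arbitrary (non-Hermitian) normed spaces, since one cannot appeal to orthonormal bases and must instead track the maximal-value norm through the multilinear expansion carefully; but no genuine difficulty arises because only one term in the Künneth-type decomposition is nonzero, which collapses all the estimates to equalities. I expect the write-up to be short.
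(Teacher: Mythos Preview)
Your proposal is correct and follows essentially the same idea as the paper's proof: both reduce to the fact that the top exterior power of a linear map is scalar multiplication by the determinant, and that the determinant of a block-diagonal map is the product of the determinants of the blocks. The paper's proof is far terser (essentially one sentence, ``by definition''), whereas you spell out the K\"unneth decomposition and worry explicitly about norm compatibility on the one-dimensional top exterior powers; this extra care is legitimate but not strictly needed, since once everything is one-dimensional the operator norm is simply the ratio of the norm of any nonzero vector to that of its image, and the block structure makes the factorization immediate.
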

\begin{proof}
  By definition, the linear maps $\wedge^{r_1}f_1$ and $\wedge^{r_2}f_2$ are both scalar products by the corresponding determinants, and $\wedge^{r_1+r_2}\left(f_1\oplus f_2\right)$ is the scalar product of the above two determinants. Then we have the result by definition directly.
\end{proof}
\subsection{Proof of Theorem \ref{global determinant}}
We are going to prove Theorem \ref{global determinant}, and some ideas of the proof below are inspired from \cite[\S 3]{Chen2}.
\begin{proof}[Proof of Theorem \ref{global determinant}]
  Let $D$ be an integer larger than $1$. We suppose that the global section predicted by Theorem \ref{global determinant} does not exist. Then the evaluation map
  \begin{equation*}
  f:\;F_{D,K}\rightarrow\bigoplus_{i\in I} P_i^*\O_{\mathbb P^n_K}(D)
  \end{equation*}
  is injective. We can replace $I$ by one of its subsets such that $f$ is an isomorphism. From now on, we suppose $f$ is isomorphic, which means $\#I=r_1(n,D)=\rg F_{D,K}$. Then by Proposition \ref{slope of evaluation map}, we have
  \[\wmu(\overline{F}_D)=\frac{1}{r_1(n,D)}\left(D\sum_{i\in I}h(P_i)+h\left(\wedge^{r_1(n,D)}f\right)\right),\]
  which implies
    \[\frac{\wmu(\overline{F}_D)}{D}\leqslant\sup_{i\in I}h(P_i)+\frac{1}{Dr_1(n,D)}h\left(\wedge^{r_1(n,D)}f\right).\]

  Now we estimate the height of $\wedge^{r_1(n,D)}f$. For every $v\in M_{K,\infty}$, we have
  \[\frac{1}{r_1(n,D)}\log\|\wedge^{r_1(n,D)}f\|_v\leqslant\log\|f\|_v\leqslant\log\sqrt{r_1(n,D)},\]
  where the second inequality comes from the definition of metrics of John in \S \ref{metric of john}.

  Now we consider the case of $v\in M_{K,f}$. The homomorphism $f$ is induced by a homomorphism of $\O_K$-module \[F_D\rightarrow\bigoplus\limits_{i\in I}\mathcal P_i^*\O_{\mathbb P^n_{\O_K}}(D),\] where $\mathcal P_i$ is the Zariski closure of $P_i$ in $\mathscr X$ for each $i\in I$. Then for every $v\in M_{K,f}$, we have $\log\|\wedge^{r_1(n,D)}f\|_v\leqslant0$.

  We fix a maximal ideal $\p$ of $\O_K$ corresponding to $v\in M_{K,f}$, and decompose the set $\{\mathcal P_i\}_{i\in I}$ as the disjoint union
  \[\{\mathcal P_i\}_{i\in I}=\bigcup_{\xi\in \mathscr X(\f_\p)}\{\mathcal P_{l,\xi}\}_{l=1}^{m_\xi},\]
  where all elements in $\{\mathcal P_{l,\xi}\}_{l=1}^{m_\xi}$ modulo $\p$ are the same point $\xi\in\mathscr X(\f_\p)$. If $\{\mathcal P_{l,\xi}\}_{l=1}^{m_\xi}$ is empty for some $\xi\in\mathscr X(\f_\p)$, we define $m_\xi=0$ for simplicity. With the above notations, let
  \[\bigcup_{\xi\in\mathscr X(\f_\p)}\{s_{l,\xi}\}_{l=1}^{m_\xi}\]
  be an $\O_{K,\p}$-basis of $F_{D,\p}$ such that $f(s_{l,\xi})$ generates $\mathcal P_{l,\xi}^*\O_{\mathbb P^n_{\O_K}}(D)$ for all $l=1,\ldots,m_\xi$ and $\xi\in\mathscr X(\f_\p)$. Since $\O_{K,\p}$ is a local ring, the $\O_{K,\p}$-module $F_{D,\p}$ is free, then there exists such a basis for a fixed maximal ideal $\p$. We denote by $F_{D,\xi}$ the sub-$\O_{K,\p}$-module of $F_{D,\p}$ generated by $\{s_{l,\xi}\}_{l=1}^{m_\xi}$.

  By Proposition \ref{local control by Q_r}, we have
  \[\log\left\|\wedge^{\rg(F_{D,\xi})}f|_{F_{D,\xi}}\right\|_\p\leqslant -Q_\xi(\rg(F_{D,\xi}))\log N(\p).\]
  By definition, we have
  \[F_{D,\p}=\bigoplus_{\xi\in\mathscr X(\f_\p)}F_{D,\xi},\]
  and
  \begin{equation}\label{sum of dimensions of F_D}
    r_1(n,D)=\sum_{\xi\in\mathscr X(\f_\p)}\rg(F_{D,\xi}).
  \end{equation}
  Then from the above construction, by applying Lemma \ref{product of determinants} and Proposition \ref{local control by Q_r} respectively, we obtain
  \[\log\left\|\wedge^{r_1(n,D)}f\right\|_\p=\sum_{\xi\in\mathscr X(\f_\p)}\log\left\|\wedge^{\rg(F_{D,\xi})}f|_{F_{D,\xi}}\right\|_\p\leqslant-\sum\limits_{\xi\in\mathscr X(\f_\p)}Q_\xi\left(\rg(F_{D,\xi})\right)\log N(\p).\]

  In order to estimate the term \[\frac{1}{r_1(n,D)}\sum\limits_{\xi\in\mathscr X(\f_\p)}Q_\xi\left(\rg(F_{D,\xi})\right),\] by \eqref{lower bound of Q_r}, we have
  \begin{eqnarray*}
    & &\sum\limits_{\xi\in\mathscr X(\f_\p)}Q_\xi\left(\rg(F_{D,\xi})\right)\\
    &>&\sum_{\xi\in\mathscr X(\f_\p)}\left(\left(\frac{(n-1)!}{\mu_\xi(\mathscr X_\p)}\right)^{\frac{1}{n-1}}\left(\frac{n-1}{n}\right)\rg(F_{D,\xi})^{\frac{n}{n-1}}-\frac{n^3+2n^2+n-4}{2n(n+1)}\rg(F_{D,\xi})\right)\\
    &=&\frac{(n-1)!^{\frac{1}{n-1}}(n-1)}{n}\sum_{\xi\in\mathscr X(\f_\p)}\frac{\rg(F_{D,\xi})^{\frac{n}{n-1}}}{\mu_\xi(\mathscr X_\p)^{\frac{1}{n-1}}}-\frac{n^3+2n^2+n-4}{2n(n+1)}r_1(n,D).
  \end{eqnarray*}
  By \eqref{sum of dimensions of F_D} and H\"older's inequality, we have
  \[\sum_{\xi\in\mathscr X(\f_\p)}\frac{\rg(F_{D,\xi})^{\frac{n}{n-1}}}{\mu_\xi(\mathscr X_\p)^{\frac{1}{n-1}}}\geqslant\frac{\left(\sum\limits_{\xi\in\mathscr X(\f_\p)}\rg(F_{D,\xi})\right)^\frac{n}{n-1}}{\left(\sum\limits_{\xi\in\mathscr X(\f_\p)}\mu_\xi(\mathscr X_\p)\right)^\frac{1}{n-1}}=\frac{r_1(n,D)^\frac{n}{n-1}}{n(\mathscr X_\p)^\frac{1}{n-1}},\]
  where $n(\mathscr X_\p)$ is defined in the statement of Theorem \ref{global determinant}. Then we obtain the inequality
      \begin{eqnarray*}
       & &\;\;\frac{\wmu(\overline{F}_D)}{D}\leqslant\sup_{i\in I}h(P_i)+\frac{\log r_1(n,D)}{2D}\\
        & &-\frac{1}{[K:\Q]}\sum_{j\in J}\left(\frac{(n-1)!^{\frac{1}{n-1}}(n-1)r_1(n,D)^\frac{1}{n-1}}{n Dn(\mathscr X_{\p_j})^\frac{1}{n-1}}-\frac{n^3+2n^2+n-4}{2Dn(n+1)}\right)\log N(\p_j),
      \end{eqnarray*}
      which leads to a contradiction.
\end{proof}

\section{Some quantitative estimates}
In order to apply the global determinant method introduced in Theorem \ref{global determinant}, we need to gather enough information on the term $n(\mathscr X_{\p})$ in it. For this target, we need to have a control of the reduction type of $\mathscr X\hookrightarrow\mathbb P^n_{\O_K}\rightarrow\spec\O_K$, an upper bound of $n(\mathscr X_\p)$ when $\mathscr X_\p\rightarrow\spec\f_\p$ is geometrically integral, and a distribution of certain prime ideals of $\O_K$. We will also provide an explicit estimate of the geometric Hilbert-Samuel function of hypersurfaces.
\subsection{Control of the non-geometrically integral reductions}
Let $X\hookrightarrow \mathbb P^n_K$ be a geometrically integral hypersurface of degree $\delta$, $\mathscr X\hookrightarrow\mathbb P^n_{\O_K}\rightarrow\spec\O_K$ be its Zariski closure, and $\mathscr X_{\f_\p}=\mathscr X\times_{\spec\O_K}\spec\f_\p\rightarrow\spec\f_\p$ for every $\p\in\spm\O_K$. By \cite[Th\'eor\`eme 9.7.7]{EGAIV_3}, the set
\begin{equation}\label{non-geometricaly integral reduction}
  \mathcal Q(\mathscr X)=\left\{\p\in\spm\O_K|\;\mathscr X_{\f_\p}\rightarrow\spec\f_\p\hbox{ is not geometrically integral}\right\}
\end{equation}
is finite.

Next, we introduce a numerical description of the set $\mathcal Q(\mathscr X)$. In fact, there are fruitful results on this subject, but most of them are over rational number field $\Q$. In \cite{Liu-non-geometricallyintegral}, the estimate \cite[Satz 4]{Ruppert1986} was generalized over arbitrary number fields by using a height function in an adelic sense by the approach of \cite[\S3.4]{Liu-reduced}. By \cite[Proposition 4.1]{Liu-non-geometricallyintegral}, we have
\begin{equation}\label{non-geometrically integral reductions}
  \frac{1}{[K:\Q]}\sum_{\p\in\mathcal Q(\mathscr X)}\log N(\p)\leqslant(\delta^2-1)h(X)+C(n,\delta),
\end{equation}
where $h(X)$ is the naive height of $X$ defined in Definition \ref{classic height of hypersurface}, $N(\p)=\#(\O_K/\p)$, and the constant
    \begin{equation*}C(n,\delta)=(\delta^2-1)\left(3\log\delta+\delta\log3+\log{n+\delta\choose\delta}\right).
  \end{equation*}
  In fact, we have $C(n,\delta)\ll_n\delta^3$.

\subsection{Quantitative estimates over finite fields}
In this subsection, we give an upper bound of the term $n(\mathscr X_{\p})$ for an arbitrary maximal ideal $\p$ of $\O_K$, where $n(\mathscr X_{\p})$ is defined in the statement of Theorem \ref{global determinant}. In this part, we consider this problem over arbitrary finite fields.

Let $\f_q$ be the finite field with $q$ elements, $X$ be a geometrically integral hypersurface in $\mathbb P^n_{\f_q}$ of degree $\delta$, and $n(X_{\f_q})=\sum\limits_{\xi\in X(\f_q)}\mu_\xi(X)$, where $\mu_\xi(X)$ is the multiplicity of $\xi$ in $X$ defined via the local Hilbert-Samuel fuction in \eqref{definition of multiplicity}. Then we have
\[n(X_{\f_q})=\#X(\f_q)+\sum\limits_{\xi\in X(\f_q)}\left(\mu_\xi(X)-1\right).\]
In order to estimate $n(X_{\f_q})$, we will consider the terms $\#X(\f_q)$ and $\sum\limits_{\xi\in X(\f_q)}\left(\mu_\xi(X)-1\right)$ separately.

\subsubsection{} For the estimate of $\#X(\f_q)$, there are fruitful results on it. For our application, we have the following result deduced from \cite[Corollary 5.6]{cafure2006improved}.
\begin{prop}\label{number of rational points of hypersurface over finite field}
    Let $X\hookrightarrow\mathbb P^n_{\f_q}$ be a geometrically integral hypersurface of degree $\delta$ over the finite field $\f_q$. When $q\leqslant \delta^2$ or $q\geqslant27\delta^4$, we have
    \[\#X(\f_q)-q^{n-1}\leqslant n\delta^2q^{n-\frac{3}{2}}.\]
\end{prop}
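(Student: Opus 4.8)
The plan is to deduce the stated bound on $\#X(\f_q)$ from the Lang--Weil type estimate for geometrically integral hypersurfaces, namely \cite[Corollary 5.6]{cafure2006improved}, after reconciling the normalizations. That reference gives, for a geometrically integral hypersurface $X$ of degree $\delta$ in $\mathbb{P}^n_{\f_q}$, an estimate of the form $\bigl|\#X(\f_q)-q^{n-1}\bigr|\leqslant (\delta-1)(\delta-2)q^{n-3/2}+c\,q^{n-2}$ (affine version, then passed to the projective closure), valid in the stated ranges $q\leqslant\delta^2$ or $q\geqslant 27\delta^4$. The first thing I would do is make precise exactly which form of the Cafure--Matera bound is being invoked and translate it from the affine setting used there to the projective hypersurface $X\hookrightarrow\mathbb{P}^n_{\f_q}$ considered here; this requires summing the point counts of the affine chart and the hyperplane at infinity (a geometrically integral hypersurface of degree $\delta$ in $\mathbb{P}^{n-1}_{\f_q}$, or lower-dimensional pieces), and checking that the error terms stay within $n\delta^2 q^{n-3/2}$.

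Concretely, I would proceed by induction on $n$. For the base case $n=1$ (so $X$ is a finite set of at most $\delta$ points in $\mathbb{P}^1$) the inequality $\#X(\f_q)-q^0=\#X(\f_q)-1\leqslant\delta^2 q^{-1/2}$ is checked directly, using that $\#X(\f_q)\leqslant\delta$ and, in the relevant ranges, $\delta\leqslant q^{1/2}\cdot(\text{something})$ when $q\leqslant\delta^2$ is handled separately by the trivial bound. For the inductive step, decompose $\mathbb{P}^n_{\f_q}$ as $\mathbb{A}^n_{\f_q}\sqcup\mathbb{P}^{n-1}_{\f_q}$; then $\#X(\f_q)=\#X^{\mathrm{aff}}(\f_q)+\#(X\cap H_\infty)(\f_q)$, where $X^{\mathrm{aff}}$ is the affine part and $X\cap H_\infty$ is a hypersurface (possibly reducible or non-reduced) of degree at most $\delta$ in $\mathbb{P}^{n-1}_{\f_q}$. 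Apply the affine Cafure--Matera bound to $X^{\mathrm{aff}}$ to get $\#X^{\mathrm{aff}}(\f_q)=q^{n-1}+O\bigl((\delta-1)(\delta-2)q^{n-3/2}\bigr)+O(q^{n-2})$, and bound $\#(X\cap H_\infty)(\f_q)\leqslant \delta\, q^{n-2}$ crudely (a degree-$\delta$ hypersurface in $\mathbb{P}^{n-1}$ has at most $\delta q^{n-2}$ rational points by the Schwartz--Zippel / Serre bound). Adding these and collecting error terms, one needs $(\delta-1)(\delta-2)q^{n-3/2}+(\delta+c)q^{n-2}\leqslant n\delta^2 q^{n-3/2}$, i.e. after dividing by $q^{n-3/2}$, one needs $(\delta-1)(\delta-2)+(\delta+c)q^{-1/2}\leqslant n\delta^2$. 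In the range $q\geqslant 27\delta^4$ the term $(\delta+c)q^{-1/2}$ is tiny, and $(\delta-1)(\delta-2)\leqslant\delta^2\leqslant n\delta^2$ since $n\geqslant 2$; in the range $q\leqslant\delta^2$ one instead uses the trivial bound $\#X(\f_q)\leqslant \delta q^{n-1}$ together with $q^{n-1}\leqslant \delta q^{n-3/2}\cdot q^{1/2}/\delta$... — here one uses $q^{1/2}\leqslant\delta$ to get $\#X(\f_q)-q^{n-1}\leqslant(\delta-1)q^{n-1}\leqslant(\delta-1)\delta\,q^{n-3/2}\leqslant n\delta^2 q^{n-3/2}$.

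The main obstacle I anticipate is purely bookkeeping: matching the precise shape of the error term in \cite[Corollary 5.6]{cafure2006improved} (which is stated for affine hypersurfaces, with a specific secondary constant and a specific hypothesis that separates the small-$q$ and large-$q$ regimes) to the clean projective form $n\delta^2 q^{n-3/2}$ claimed here, and verifying that the inductive contribution from the hyperplane section $X\cap H_\infty$ — which need not be geometrically integral — can be absorbed. The key quantitative input making this work is that the cross-over thresholds $q\leqslant\delta^2$ and $q\geqslant 27\delta^4$ are exactly the ones in the cited corollary, so no genuinely new estimate is needed; the work is in confirming the constant $n\delta^2$ is large enough to swallow both the $(\delta-1)(\delta-2)$ leading coefficient from the affine bound and the lower-order projective correction, uniformly in the two ranges of $q$.
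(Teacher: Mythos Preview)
Your overall strategy is sound, but the execution is considerably more elaborate than what the paper does, and your handling of the small-$q$ range has a gap.

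The paper's proof is a direct three-case split with no induction and no affine-to-projective translation. For $q\geqslant 27\delta^4$ it simply cites \cite[Corollary 5.6]{cafure2006improved} as giving
\[
\#X(\f_q)-q^{n-1}\leqslant(\delta-1)(\delta-2)q^{n-3/2}+(5\delta^2+\delta+1)q^{n-2},
\]
and absorbs the second term into $n\delta^2 q^{n-3/2}$ using $q^{-1/2}\leqslant(27\delta^4)^{-1/2}$; there is no decomposition $\mathbb P^n=\mathbb A^n\sqcup\mathbb P^{n-1}$ and no induction on $n$. Your plan to pass through the affine chart and bound $X\cap H_\infty$ separately is unnecessary here and introduces extra bookkeeping (and the worry that $X\cap H_\infty$ need not be geometrically integral) that the paper avoids entirely.

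For $q\leqslant\delta^2$ you invoke the ``trivial bound'' $\#X(\f_q)\leqslant\delta q^{n-1}$, but this Serre-type inequality is only valid when $\delta\leqslant q$; it can fail for $q<\delta$. The paper handles this by splitting further: when $q\leqslant\delta$ it uses the genuinely trivial bound $\#X(\f_q)\leqslant\#\mathbb P^n(\f_q)\leqslant(n+1)q^n$ and then $q^n\leqslant\delta^2 q^{n-3/2}$ since $q^{3/2}\leqslant\delta^{3/2}\leqslant\delta^2$; when $\delta+1\leqslant q\leqslant\delta^2$ it uses $\#X(\f_q)\leqslant\delta\cdot\#\mathbb P^{n-1}(\f_q)$, which is now legitimate, and proceeds as you do. So your argument is essentially correct once you add this extra split for $q\leqslant\delta$; without it, the step $\#X(\f_q)\leqslant\delta q^{n-1}$ is unjustified.

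Finally, a minor misstatement: the Cafure--Matera estimate is \emph{not} valid in the range $q\leqslant\delta^2$; the proposition's disjunction ``$q\leqslant\delta^2$ or $q\geqslant 27\delta^4$'' reflects two entirely different mechanisms (elementary bounds versus Lang--Weil), not a single cited inequality covering both.
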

\begin{proof}
  We consider this estimate case by case as following.
\begin{enumerate}
  \item If $q\leqslant\delta$, we have $\#X(\f_q)\leqslant\#\mathbb P^n(\f_q)=q^n+\cdots+1$. Then \[\#X(\f_q)-q^{n-1}\leqslant nq^n\leqslant n\delta^2q^{n-\frac{3}{2}}.\]
  \item If $\delta+1\leqslant q\leqslant\delta^2$, we have $\#X(\f_q)\leqslant\delta\#\mathbb P^{n-1}(\f_q)=\delta(q^{n-1}+\cdots+1)$. Then \[\#X(\f_q)-q^{n-1}\leqslant(\delta-1)q^{n-1}+\delta(q^{n-2}+\cdots+1)\leqslant n\delta^2q^{n-\frac{3}{2}}.\]
  \item If $q\geqslant27\delta^4$, by \cite[Corollary 5.6]{cafure2006improved}, we have \[\#X(\f_q)-q^{n-1}\leqslant (\delta-1)(\delta-2)q^{n-\frac{3}{2}}+(5\delta^2+\delta+1)q^{n-2}\leqslant n\delta^2q^{n-\frac{3}{2}}.\]
\end{enumerate}
\end{proof}
\begin{rema}
  With all the notations in Proposition \ref{number of rational points of hypersurface over finite field}. When $\delta^2\ll_nq\ll_n\delta^4$, by \cite[Corollary 5.6]{cafure2006improved}, we have
  \[\#X(\f_q)-q^{n-1}\leqslant(\delta-1)(\delta-2)q^{n-\frac{3}{2}}+B(n,\delta)q^{n-2},\]
  where the constant satisfies $B(n,\delta)\ll_n\delta^4$. It seems that the constant $B(n,\delta)$ could have a better dependence on $\delta$, but up to the author's knowledge, we do not know the answer.
\end{rema}
\subsubsection{}For the term $\sum\limits_{\xi\in X(\f_q)}\left(\mu_\xi(X)-1\right)$, by \cite[Theorem 5.1]{Liu-multiplicity}, we have
\begin{eqnarray}\label{estimate of multiplicities}
  \sum\limits_{\xi\in X(\f_q)}\left(\mu_\xi(X)-1\right)&\leqslant&\frac{1}{2}\sum\limits_{\xi\in X(\f_q)}\mu_\xi(X)\left(\mu_\xi(X)-1\right)\\
  &\leqslant&\frac{(n-1)^2}{2}\delta(\delta-1)\max\{\delta-1,q\}^{n-2},\nonumber
\end{eqnarray}
which has the optimal dependances on $\delta$ and $\max\{\delta-1,q\}$ when $q\geqslant\delta-1$.
\subsubsection{}
 We combine Propostion \ref{number of rational points of hypersurface over finite field} and the estimate \eqref{estimate of multiplicities}. When $q\leqslant \delta^2$ or $q\geqslant27\delta^4$, we have
\begin{equation*}
  n(X_{\f_q})\leqslant q^{n-1}+ n^2\delta^2\max\left\{q,\delta-1\right\}^{n-\frac{3}{2}}
\end{equation*}
by an elementary calculation. In addition, we have
\begin{equation}\label{estimate of sqrt[n-1]{n(X)}}
  \frac{1}{n(X_{\f_q})^{\frac{1}{n-1}}}\geqslant\frac{1}{q}-\frac{n^2\delta^2}{\max\left\{q,\delta-1\right\}^{\frac{3}{2}}}
\end{equation}
under the same assumption of $q$ and $\delta$ as above.

\subsection{Distribution of certain prime ideals}
In this part, we will consider some distributions of prime ideals of the ring of integers of number fields.
\subsubsection{Distribution of prime ideals containing a fixed ideal}
In this part, we will consider the distribution of certain maximal ideals of $\O_K$. First, we generalize \cite[Lemma 1.10]{Salberger_preprint2013} over an arbitrary number field, where the former result works over $\mathbb Z$ only.
\begin{lemm}\label{estimate of log p/p}
  Let $\mathfrak a$ be a proper ideal of $\O_K$, $\p$ be an prime ideal of $\O_K$, and $N(\mathfrak a)=\#(\O_K/\mathfrak a)$. Then we have
\[\frac{1}{[K:\Q]}\sum_{\p\supseteq\mathfrak a}\frac{\log N(\p)}{N(\p)}\leqslant\log\log\left(N(\mathfrak a)\right)+2,\]
  where the above sum takes all over the prime ideals contained in $\mathfrak a$ of $\O_K$.
\end{lemm}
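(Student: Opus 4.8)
The plan is to reduce the sum over prime ideals containing $\mathfrak a$ to a sum over rational primes, and then invoke the classical Mertens-type estimate over $\Q$. First I would observe that every prime ideal $\p \supseteq \mathfrak a$ of $\O_K$ lies above some rational prime $p$ dividing $N(\mathfrak a)$: indeed, if $\p \supseteq \mathfrak a$ then $N(\p) \mid N(\mathfrak a)$ (since $\O_K/\mathfrak a$ surjects onto $\O_K/\p$), so the residue characteristic $p$ of $\p$ divides $N(\mathfrak a)$. Conversely, over each such rational prime $p$ there are at most $[K:\Q]$ primes $\p$ of $\O_K$, and for each of them $N(\p) = p^{f(\p)} \geqslant p$, hence $\tfrac{\log N(\p)}{N(\p)} \leqslant f(\p)\tfrac{\log p}{p} \leqslant [K:\Q]\tfrac{\log p}{p}$ after noting $\sum_{\p \mid p} f(\p) \leqslant [K:\Q]$ and each term $\tfrac{\log N(\p)}{N(\p)}=\tfrac{f(\p)\log p}{p^{f(\p)}}\leqslant \tfrac{f(\p)\log p}{p}$. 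Summing over the $\p \mid p$ this gives $\sum_{\p \mid p}\tfrac{\log N(\p)}{N(\p)} \leqslant [K:\Q]\tfrac{\log p}{p}$.

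Combining these, after dividing by $[K:\Q]$ we obtain
\[
\frac{1}{[K:\Q]}\sum_{\p \supseteq \mathfrak a}\frac{\log N(\p)}{N(\p)} \;\leqslant\; \sum_{p \mid N(\mathfrak a)}\frac{\log p}{p},
\]
where the right-hand sum runs over rational primes dividing $N(\mathfrak a)$. Now it remains a purely rational-prime estimate: bound $\sum_{p \mid N}\tfrac{\log p}{p}$ in terms of $\log\log N$ for an integer $N = N(\mathfrak a) \geqslant 2$. The worst case is when $N$ is (up to the exponents, which only help) a product of the first several primes, so I would set $N' = \prod_{p \leqslant y} p$ with $y$ chosen minimal so that $N' \geqslant N$; then $\sum_{p\mid N}\tfrac{\log p}{p} \leqslant \sum_{p \leqslant y}\tfrac{\log p}{p}$, and by Mertens' theorem $\sum_{p \leqslant y}\tfrac{\log p}{p} = \log y + O(1)$, while Chebyshev's bound $\theta(y) = \sum_{p\leqslant y}\log p \asymp y$ together with $\log N \leqslant \log N' = \theta(y)$ gives $y \ll \log N$, hence $\log y \leqslant \log\log N + O(1)$. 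Tracking the constants carefully (using explicit forms of Mertens' and Chebyshev's estimates) yields the stated bound $\log\log N(\mathfrak a) + 2$.

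The main obstacle is not conceptual but quantitative: one must pin down the absolute constant so that it comes out as exactly $2$ (rather than some unspecified $O(1)$). This requires using explicit versions of Mertens' first theorem and of Chebyshev's inequality $\theta(y) \geqslant c y$ with a usable constant $c$, and checking the small cases of $N(\mathfrak a)$ by hand. I expect that the clean way to do this is to follow the argument of \cite[Lemma 1.10]{Salberger_preprint2013} verbatim once the reduction to rational primes above is in place, since that reference already carries out precisely this explicit computation over $\Z$; the only new input here is the elementary observation that passing from $\O_K$ to $\Z$ costs nothing after normalizing by $[K:\Q]$, because each rational prime contributes at most $[K:\Q]$ prime ideals with $N(\p)\geqslant p$.
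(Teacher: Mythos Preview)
Your proposal is correct and follows essentially the same two-step strategy as the paper: reduce to a sum over rational primes dividing $N(\mathfrak a)$ via the elementary inequality $\tfrac{\log N(\p)}{N(\p)}\leqslant\tfrac{\log p}{p}$ together with $\sum_{\p\mid p}f(\p)\leqslant[K:\Q]$, then invoke the $\Q$-case. The only difference is that the paper actually writes out Salberger's factorial argument for the $\Q$-case (using $m\sum_{p\mid\pi}\tfrac{\log p}{p}-\sum_{p\mid\pi}\log p\leqslant\log m!$ with $m=[\log\pi]$) rather than the Mertens--Chebyshev route you sketch; since you yourself point to \cite[Lemma~1.10]{Salberger_preprint2013} as the clean way to secure the constant~$2$, your plan and the paper's execution coincide.
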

\begin{proof}
We will prove the inequality for the case of $K=\Q$ at first, and then we show the general case by it.

  \textbf{Case of $K=\Q$. - }In this case, we will repeat the proof of \cite[Lemma 1.10]{Salberger_preprint2013} by Salberger, since this preprint is not easily available. Suppose that $\mathfrak a$ is generated by the positive square-free integer $\pi$, and let $m$ be a positive integer such that $m\leqslant\pi$. For the prime $p$, let $v_p(m)$ be the largest integer such that $p^{v_p(m)}\mid m$. By \cite[Tome I, Corollaire 1.7]{Ten} and \cite[Tome I, Th\'eor\`eme 1.8]{Ten}, we have
  \begin{equation*}
    m\sum_{p|\pi}\frac{\log p}{p}-\sum_{p|\pi}\log p\leqslant\sum_{p|\pi}v_p(m!)\log p\leqslant\sum_{p\leqslant\pi}v_p(m!)\log p=\log m!\leqslant m\log m,
  \end{equation*}
  and then we obtain
  \[\sum_{p|\pi}\frac{\log p}{p}\leqslant\log m+\frac{1}{m}\sum_{p|\pi}\log p\leqslant\log m+\frac{1}{m}\log \pi.\]
  Let $m=[\log\pi]$ for $\pi\geqslant2$, and then we accomplish the proof for $K=\Q$.

  \textbf{Case of arbitrary number fields. - }Let \[\mathfrak a=\p_1^{v_{\p_1}(\mathfrak a)}\cdots\p_k^{v_{\p_k}(\mathfrak a)},\] where $\p_1,\ldots,\p_k$ are distinct prime ideals of $\O_K$, and $v_{\p_i}(\mathfrak a)\in\mathbb N^+$ for all $i=1,\ldots,k$. Let the prime $p_i$ be the characteristic of the prime ideal $\p_i$, where $i=1,\ldots, k$ as above. For a fixed prime $p$, there are at most $[K:\Q]$ prime ideals of characteristic $p$ in $\O_K$. For all prime $p$ and $f\in\mathbb N^+$, we have
  \[\frac{\log p^f}{p^f}\leqslant\frac{\log p}{p}.\] Let $P(\mathfrak a)$ be the product of all the different characteristics of $\p_1,\ldots,\p_k$, and we have $P(\mathfrak a)\leqslant N(\mathfrak a)$ by definition directly. Then by the above facts, we obtain
\[\sum_{\p\supseteq\mathfrak a}\frac{\log N(\p)}{N(\p)}\leqslant[K:\Q]\sum_{p|P(\mathfrak a)}\frac{\log p}{p}.\]
  By the case of $K=\Q$, we have
  \[\sum_{p|P(\mathfrak a)}\frac{\log p}{p}\leqslant\log\log P(\mathfrak a)+2\leqslant\log\log N(\mathfrak a)+2,\]
  which proves the assertion.
\end{proof}
\subsubsection{Distribution of prime ideals with bounded norm}\label{implicit estimates of distribution of prime ideals of bounded norm}
Let $x\in\mathbb R^+$, $\p\in\spm\O_K$, and $N(\p)=\#(\O_K/\p)$. In this part, we consider the of

\begin{equation}\label{definition of theta_K psi_K phi_K}
\theta_K(x)=\sum\limits_{N(\p)\leqslant x}\log N(\p),\;\psi_K(x)=\sum\limits_{N(\p)\leqslant x}\frac{\log N(\p)}{N(\p)},\;\phi_K(x)=\sum\limits_{N(\p)\leqslant x}\frac{\log N(\p)}{N(\p)^\frac{3}{2}}.
\end{equation}

When $K=\Q$, these are classic estimates of Chebyshev function (cf. \cite[Tome I, Th\'eor\`eme 2.11]{Ten}) and Mertens' first theorem (cf. \cite[Tome I, Th\'eor\`eme 1.8]{Ten}). For the case of arbitrary number fields, a generalization of \cite[Tome I, Th\'eor\`eme 2.11]{Ten} is Landau's prime ideals theorem (cf. \cite[Theorem 2.2]{Rosen1999}), and a generalization of Mertens' first theorem was deduced from this in \cite[Lemma 2.3]{Rosen1999}.

In this part, we will give a more explicit version of some results \cite{Rosen1999}, which will be used in the application of Theorem \ref{global determinant}.

By Landau's prime ideal theorem (cf. \cite[Theorem 2.2]{Rosen1999}), we have
\[\theta_k(x)=x+O_K\left(xe^{-c\sqrt{\log x}}\right),\]
where $c$ is a constant depending on $K$. Then there exists a function $\epsilon_1(K,x)$ of the number field $K$ and $x\in\mathbb R^+$, such that
\begin{equation}\label{implicit estimate of theta_K}
  \left|\theta_K(x)-x\right|\leqslant\epsilon_1(K,x),
\end{equation}
  where $\epsilon_1(K,x)=O_K\left(xe^{-c\sqrt{\log x}}\right)$ for all $x\in\mathbb R^+$, and $c$ depends on $K$ only.

By \cite[Lemma 2.3]{Rosen1999}, we have
\[\psi_K(x)=\log x+O_K(1),\]
which is obtained by Abel's summation formula applied in \cite[Lemma 2.1]{Rosen1999}. Then there exists a function $\epsilon_2(K)$ of the number field $K$, such that
\begin{equation}\label{implicit estimate of psi_K}
  \left|\psi_K(x)-\log x\right|\leqslant\epsilon_2(K).
\end{equation}

Same as the application of \cite[Lemma 2.1]{Rosen1999} to the proof of \cite[Lemma 2.3]{Rosen1999}, we have
\[\phi_K(x)=\frac{\theta_K(x)}{x^\frac{3}{2}}+\frac{3}{2}\int_2^x\frac{\theta_K(t)}{t^\frac{5}{2}}dt\]
by \cite[Lemma 2.1]{Rosen1999}. Then by \eqref{implicit estimate of theta_K}, we have
\[\left|\phi_K(x)-\frac{1}{\sqrt{x}}-\frac{3}{2}\int_2^x\frac{1}{t^{\frac 32}}dt\right|\leqslant\frac{\epsilon_1(K,x)}{x^{\frac 32}}+\frac{3}{2}\int_2^x\frac{\epsilon_1(K,t)}{t^{\frac 52}}dt.\]
Then by an elementary calculation, there exists a function $\epsilon_3(K,x)$ of the number field $K$ and $x\in\mathbb R^+$, such that
\begin{equation}\label{implicit estimate of phi_K}
  \left|\phi_K(x)-\frac{3}{2}\sqrt{2}+\frac{2}{\sqrt{x}}\right|\leqslant \epsilon_3(K,x),
\end{equation}
 where
 \begin{equation}\label{definition of epsilon_3}
 \epsilon_3(K,x)=\frac{\epsilon_1(K,x)}{x^{\frac 32}}+\frac{3}{2}\int_2^x\frac{\epsilon_1(K,t)}{t^{\frac 52}}dt.
\end{equation}

\subsubsection{Distribution of non-geometrically integral reductions}
In this part, we consider a distribution of the prime ideals modulo which the reductions are not geometrically integral. In the estimate below, the estimates of \eqref{definition of theta_K psi_K phi_K} will be involved.

\begin{prop}\label{estimate of log p/p for non-geometrically integral 2}
  Let $X$ be a geometrically integral hypersurface of degree $\delta$ in $\mathbb P^n_K$, and $\mathscr X$ be its Zariski closure in $\mathbb P^n_{\O_K}$. Let $\p\in\spm\O_K$, $N(\p)=\#(\O_K/\p)$,
  \[\mathcal Q'(\mathscr X)=\{\p\in\spm\O_K|\;N(\p)>27\delta^4\hbox{ and }\mathscr X_{\p}\rightarrow\spec\f_\p\hbox{ not geometrically integral}\},\]
and
\[b'(\mathscr X)=\prod_{\p\in\mathcal Q'(\mathscr X)}\exp\left(\frac{\log N(\p)}{N(\p)}\right).\]
Then we have
\begin{eqnarray*}
  b'(\mathscr X)&\leqslant&\exp\left(2\epsilon_2(K)-3\log3+[K:\Q]\right)\\
  & &\;\cdot(\delta^{-2}-\delta^{-4})\left(h(X)+\left(3\log\delta+\delta\log3+\log{n+\delta\choose\delta}\right)\right),
\end{eqnarray*}
where $h(X)$ is defined in Definition \ref{classic height of hypersurface}, and $\epsilon_2(K)$ is defined in \eqref{implicit estimate of psi_K}.
\end{prop}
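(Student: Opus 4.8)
The plan is to take logarithms, so that the task becomes bounding $\log b'(\mathscr X)=\sum_{\p\in\mathcal Q'(\mathscr X)}\frac{\log N(\p)}{N(\p)}$, and the crucial idea is to compare this sum with the effective Mertens-type estimate encoded in $\psi_K$ from \eqref{definition of theta_K psi_K phi_K}, rather than bounding each term crudely by $\frac{\log N(\p)}{27\delta^4}$. If $\mathcal Q'(\mathscr X)=\varnothing$, then $b'(\mathscr X)=1$ and (for $\delta\geqslant2$, the only case of interest, since a degree-$1$ hypersurface reduces to a geometrically integral scheme everywhere) there is nothing to prove. Otherwise $\mathcal Q'(\mathscr X)$ is non-empty and finite, being a subset of the finite set $\mathcal Q(\mathscr X)$; I would pick $\p_0\in\mathcal Q'(\mathscr X)$ with $N(\p_0)$ maximal. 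Since every $\p\in\mathcal Q'(\mathscr X)$ satisfies $27\delta^4<N(\p)\leqslant N(\p_0)$ and all the summands are positive,
\[\log b'(\mathscr X)\ \leqslant\!\!\sum_{27\delta^4<N(\p)\leqslant N(\p_0)}\!\!\frac{\log N(\p)}{N(\p)}\ =\ \psi_K\big(N(\p_0)\big)-\psi_K\big(27\delta^4\big).\]

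Next I would apply \eqref{implicit estimate of psi_K} to both terms on the right: $\psi_K(N(\p_0))\leqslant\log N(\p_0)+\epsilon_2(K)$ and $\psi_K(27\delta^4)\geqslant\log(27\delta^4)-\epsilon_2(K)$ (legitimate since $27\delta^4\geqslant2$), which yields $\log b'(\mathscr X)\leqslant\log N(\p_0)-\log(27\delta^4)+2\epsilon_2(K)$. It remains only to control the single quantity $\log N(\p_0)$, for which a crude estimate is enough: as $\p_0\in\mathcal Q'(\mathscr X)\subseteq\mathcal Q(\mathscr X)$ and the terms of $\sum_{\p\in\mathcal Q(\mathscr X)}\log N(\p)$ are non-negative, $\log N(\p_0)$ is bounded by that whole sum, which by \eqref{non-geometrically integral reductions} is at most $[K:\Q]\big((\delta^2-1)h(X)+C(n,\delta)\big)=[K:\Q](\delta^2-1)\big(h(X)+3\log\delta+\delta\log3+\log\tbinom{n+\delta}{\delta}\big)$, using the explicit formula for $C(n,\delta)$.

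Finally, exponentiating and using $\exp(-\log(27\delta^4))=\tfrac{1}{27\delta^4}$ together with $\tfrac{\delta^2-1}{\delta^4}=\delta^{-2}-\delta^{-4}$, one gets
\[b'(\mathscr X)\leqslant\frac{[K:\Q]\,\mathrm{e}^{2\epsilon_2(K)}}{27}\,\big(\delta^{-2}-\delta^{-4}\big)\Big(h(X)+3\log\delta+\delta\log3+\log\tbinom{n+\delta}{\delta}\Big),\]
and the stated inequality follows from the elementary bound $[K:\Q]\leqslant\mathrm{e}^{[K:\Q]}$, which rewrites the prefactor $\tfrac{[K:\Q]}{27}\mathrm{e}^{2\epsilon_2(K)}$ as $\exp\big(2\epsilon_2(K)-3\log3+[K:\Q]\big)$. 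The step that really carries the argument — and the reason a term-by-term bound by $\frac{\log N(\p)}{27\delta^4}$ would only give an estimate exponential in $h(X)$ — is the passage through $\psi_K$: it is precisely the term $-\log(27\delta^4)$ coming from the lower bound for $\psi_K(27\delta^4)$ that, after exponentiation, produces the saving factor $\tfrac{1}{27\delta^4}$ and thereby makes the final bound linear in $h(X)$; once this is in place, the crude bound on $\log N(\p_0)$ is harmless, and the rest is bookkeeping with constants.
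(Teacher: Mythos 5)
Your proposal has a genuine and fatal gap at the final step. You correctly derive
\[
\log b'(\mathscr X)\ \leqslant\ \psi_K\bigl(N(\p_0)\bigr)-\psi_K\bigl(27\delta^4\bigr)\ \leqslant\ \log N(\p_0)-\log(27\delta^4)+2\epsilon_2(K),
\]
which upon exponentiating gives $b'(\mathscr X)\leqslant \dfrac{N(\p_0)\,\mathrm{e}^{2\epsilon_2(K)}}{27\delta^4}$. What you must then control is $N(\p_0)$ itself — but the estimate \eqref{non-geometrically integral reductions} only bounds $\log N(\p_0)\leqslant[K:\Q]\,c'(\mathscr X)$, where $c'(\mathscr X)=(\delta^2-1)\bigl(h(X)+3\log\delta+\delta\log3+\log\tbinom{n+\delta}{\delta}\bigr)$. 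In general $N(\p_0)$ can be as large as $\exp\bigl([K:\Q]\,c'(\mathscr X)\bigr)$, so your chain of inequalities proves only a bound for $b'(\mathscr X)$ that is \emph{exponential} in $h(X)$, far weaker than the proposition. In your last display you have in effect substituted the bound for $\log N(\p_0)$ in place of a bound for $N(\p_0)$; that substitution is what creates the illusion of linearity in $h(X)$. The closing remark that "the crude bound on $\log N(\p_0)$ is harmless" is therefore exactly backwards — that crude bound is what ruins the argument.

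The paper avoids this by cutting the $\psi_K$-sum not at $N(\p_0)$, which may be huge, but at the \emph{a priori} threshold $c'(\mathscr X)$. Primes $\p\in\mathcal Q'(\mathscr X)$ with $27\delta^4<N(\p)\leqslant c'(\mathscr X)$ are handled with $\psi_K$, giving $\log c'(\mathscr X)-\log(27\delta^4)+2\epsilon_2(K)$ — note the crucial $\log c'(\mathscr X)$ rather than $c'(\mathscr X)$. For the remaining primes with $N(\p)>c'(\mathscr X)$ one uses $\frac{\log N(\p)}{N(\p)}\leqslant\frac{\log N(\p)}{c'(\mathscr X)}$ and then $\sum_{\p\in\mathcal Q(\mathscr X)}\log N(\p)\leqslant[K:\Q]\,c'(\mathscr X)$, so the tail contributes at most $[K:\Q]$ to $\log b'(\mathscr X)$. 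Altogether $\log b'(\mathscr X)\leqslant\log c'(\mathscr X)-3\log3-4\log\delta+2\epsilon_2(K)+[K:\Q]$, and exponentiating now yields a bound \emph{linear} in $c'(\mathscr X)$, hence in $h(X)$, exactly as stated. The two-range split at $c'(\mathscr X)$, together with the sparsity bound on the tail, is the idea your proposal is missing.
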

\begin{proof}
  We denote by $\mathcal P'(\mathscr X)$ the product of all maximal ideals in $\mathcal Q'(\mathscr X)$, and \[c'(\mathscr X)=(\delta^2-1)\left(h(X)+\left(3\log\delta+\delta\log3+\log{n+\delta\choose\delta}\right)\right).\] Then by Lemma \ref{estimate of log p/p} and \eqref{non-geometrically integral reductions}, we have
  \begin{eqnarray*}
    & &\frac{1}{[K:\Q]}\log b'(\mathscr X)=\frac{1}{[K:\Q]}\sum_{\p\in\mathcal Q'(\mathscr X)}\frac{\log N(\p)}{N(\p)}\\
    &\leqslant&\frac{1}{[K:\Q]}\sum_{27\delta^4<N(\p)\leqslant c'(\mathscr X)}\frac{\log N(\p)}{N(\p)}+\frac{1}{[K:\Q]}\sum_{\begin{subarray}{c}\p\in\mathcal Q'(\mathscr X)\\N(\p)>c'(\mathscr X)\end{subarray}}\frac{\log N(\p)}{ c'(\mathscr X)}.
  \end{eqnarray*}
  By \eqref{implicit estimate of psi_K}, we have
  \begin{eqnarray*}
    & &\frac{1}{[K:\Q]}\sum_{27\delta^4<N(\p)\leqslant c'(\mathscr X)}\frac{\log N(\p)}{N(\p)}\\
    &=&\frac{1}{[K:\Q]}\sum_{N(\p)\leqslant c'(\mathscr X)}\frac{\log N(\p)}{N(\p)}-\frac{1}{[K:\Q]}\sum_{N(\p)\leqslant27\delta^4}\frac{\log N(\p)}{N(\p)}\\
    &\leqslant&\frac{1}{[K:\Q]}\left(\log c'(\mathscr X)-4\log\delta+2\epsilon_2(K)-3\log3\right),
  \end{eqnarray*}
where $\epsilon_2(K)$ is defined in \eqref{implicit estimate of psi_K} depending on $K$ only.

Let
\[\mathcal Q(\mathscr X)=\left\{\p\in\spm\O_K|\;\mathscr X_{\f_\p}\rightarrow\spec\f_\p\hbox{ is not geometrically integral}\right\}.\]
Since $\mathcal Q'(\mathscr X)\subseteq\mathcal Q(\mathscr X)$, then we have
\begin{equation*}
  \frac{1}{[K:\Q]}\sum_{\begin{subarray}{c}\p\in\mathcal Q'(\mathscr X)\\N(\p)>c'(\mathscr X)\end{subarray}}\frac{\log N(\p)}{ c'(\mathscr X)}\leqslant\frac{1}{[K:\Q]c'(\mathscr X)}\sum_{\p\in\mathcal Q(\mathscr X)}\log N(\p)\leqslant1,
\end{equation*}
where the last inequality is from \eqref{non-geometrically integral reductions}. By combining the above two estimates, we terminate the proof.
\end{proof}
\begin{rema}
  With all the notations and assumptions in Proposition \ref{estimate of log p/p for non-geometrically integral 2}. We have
  \[b'(\mathscr X)\ll_{n,K}\max\left\{\delta^{-2}h(X),\delta^{-1}\right\}.\]
\end{rema}

\subsection{An explicit estimate of the geometric Hilbert-Samuel function}
In this part, we will provide an explicit lower bound of the geometric Hilbert-Samuel function of a projective hypersurface, which will be used in the application of the determinant method. The inequality
 \[\frac{(N-m+1)^m}{m!}\leqslant{N \choose m}\leqslant \frac{\left(N-(m-1)/2\right)^m}{m!}\]
will be helpful in the calculation below.
\begin{lemm}\label{explicit bounds of geometric Hilbert-Samuel function}
  Let $X$ be a hypersurface of degree $\delta$ in $\mathbb P^n_K$. We denote by $r_1(n,D)$ its geometric Hilbert-Samuel function with the variable $D$. When $D\geqslant\delta+1$, we have
  \[r_1(n,D)^\frac{1}{n-1}\geqslant\sqrt[n-1]{\frac{\delta}{(n-1)!}}D-(\delta-2)\sqrt[n-1]{\frac{\delta}{(n-1)!}},\]
  and
  \[r_1(n,D)^\frac{1}{n-1}\leqslant\sqrt[n-1]{\frac{\delta}{(n-1)!}}D+\frac{n}{2}\sqrt[n-1]{\frac{\delta}{(n-1)!}}.\]

\end{lemm}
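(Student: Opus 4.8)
The statement is a purely numerical estimate on the geometric Hilbert–Samuel function $r_1(n,D) = \binom{n+D}{n} - \binom{n+D-\delta}{n}$ of a degree-$\delta$ hypersurface in $\mathbb P^n_K$, valid for $D \geqslant \delta+1$. The plan is to rewrite $r_1(n,D)$ as a sum telescoping the binomial difference, then apply the elementary inequality
\[
\frac{(N-m+1)^m}{m!} \leqslant \binom{N}{m} \leqslant \frac{(N-(m-1)/2)^m}{m!}
\]
(stated just before the lemma) with $m = n-1$, to the natural expression of $r_1$ as a quantity of ``polynomial degree $n-1$ in $D$''. First I would use the classical identity
\[
\binom{n+D}{n} - \binom{n+D-\delta}{n} = \sum_{j=0}^{\delta-1}\binom{n+D-1-j}{n-1},
\]
which comes from the Pascal recursion $\binom{a}{n} - \binom{a-1}{n} = \binom{a-1}{n-1}$ summed over $a = n+D-\delta+1, \dots, n+D$. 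This exhibits $r_1(n,D)$ as a sum of $\delta$ binomial coefficients each of ``top'' between $n+D-\delta$ and $n+D-1$ and ``bottom'' $n-1$.

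For the \textbf{lower bound}, I would keep only the smallest term or, better, bound each of the $\delta$ summands below using the left inequality: $\binom{n+D-1-j}{n-1} \geqslant \frac{(D-j+1)^{n-1}}{(n-1)!} \geqslant \frac{(D-\delta+2)^{n-1}}{(n-1)!}$ for $0 \leqslant j \leqslant \delta-1$, so that $r_1(n,D) \geqslant \frac{\delta}{(n-1)!}(D-\delta+2)^{n-1}$. Taking $(n-1)$-th roots gives $r_1(n,D)^{1/(n-1)} \geqslant \sqrt[n-1]{\tfrac{\delta}{(n-1)!}}\,(D - \delta + 2)$, which is exactly the claimed lower bound $\sqrt[n-1]{\tfrac{\delta}{(n-1)!}}D - (\delta-2)\sqrt[n-1]{\tfrac{\delta}{(n-1)!}}$. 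The hypothesis $D \geqslant \delta+1$ is what guarantees $D - j + 1 \geqslant D - \delta + 2 \geqslant 3 > 0$, so all the quantities being raised to powers are positive and the inequality direction is preserved.

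For the \textbf{upper bound}, I would use the right inequality on each summand: $\binom{n+D-1-j}{n-1} \leqslant \frac{(D-j + (n-1)/2)^{n-1}}{(n-1)!} \leqslant \frac{(D + (n-1)/2)^{n-1}}{(n-1)!}$ (again using $j \geqslant 0$), giving $r_1(n,D) \leqslant \frac{\delta}{(n-1)!}(D + (n-1)/2)^{n-1}$, hence $r_1(n,D)^{1/(n-1)} \leqslant \sqrt[n-1]{\tfrac{\delta}{(n-1)!}}\,(D + (n-1)/2)$. Since $(n-1)/2 \leqslant n/2$, this is weaker than — hence implies — the stated bound $\sqrt[n-1]{\tfrac{\delta}{(n-1)!}}D + \tfrac{n}{2}\sqrt[n-1]{\tfrac{\delta}{(n-1)!}}$; in fact one gets a slightly sharper constant $(n-1)/2$, but the looser $n/2$ is presumably kept for uniformity with later formulas. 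The only genuinely delicate point — the ``main obstacle'', though it is minor — is keeping track of which term in the binomial sum is extremal and checking that all arguments of the powers stay nonnegative so the monotonicity of $t \mapsto t^{n-1}$ can be invoked; this is precisely where $D \geqslant \delta + 1$ enters and must be used, rather than a weaker hypothesis like $D \geqslant \delta$. Everything else is the routine binomial estimate quoted in the excerpt, applied termwise and then summed.
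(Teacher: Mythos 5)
Your proof is correct and follows essentially the same route as the paper: the same telescoping decomposition of $r_1(n,D)$ into $\delta$ binomial coefficients with lower index $n-1$, followed by the same termwise application of the quoted inequality; your lower-bound step is in fact slightly more direct, since the paper first expands $(D-\delta+2)^{n-1}\geqslant D^{n-1}-(\delta-2)D^{n-2}$ and then takes roots, whereas you take the $(n-1)$-th root of $\frac{\delta}{(n-1)!}(D-\delta+2)^{n-1}$ immediately and land exactly on the claimed bound. One small correction in the upper bound: with $N=n+D-1-j$ and $m=n-1$ one has $N-(m-1)/2=D-j+\tfrac{n}{2}$, not $D-j+\tfrac{n-1}{2}$, so the termwise estimate yields exactly the stated constant $\tfrac{n}{2}$ rather than the sharper $\tfrac{n-1}{2}$ you announce; this slip does not affect the validity of the conclusion.
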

\begin{proof}
  In fact, we have
  \[r_1(n,D)={n+D\choose n}-{n+D-\delta\choose n}\]
  when $D\geqslant\delta+1$.

  In order to obtain the lower bound, we have
  \begin{eqnarray}\label{lower bound of r_1(n,D)}
    r_1(n,D)&=&\sum_{j=1}^\delta{D-\delta+n-1+j\choose n-1}\geqslant\frac{\delta(D-\delta+2)^{n-1}}{(n-1)!}\\
    &\geqslant&\frac{\delta}{(n-1)!}D^{n-1}-\frac{\delta(\delta-2)}{(n-1)!}D^{n-2}.\nonumber
  \end{eqnarray}
  Then we obtain
  \begin{eqnarray*}
    r_1(n,D)^\frac{1}{n-1}&\geqslant&\sqrt[n-1]{\frac{\delta}{(n-1)!}}D\left(1-\frac{\delta-2}{D}\right)^\frac{1}{n-1}\\
    &\geqslant&\sqrt[n-1]{\frac{\delta}{(n-1)!}}D-(\delta-2)\sqrt[n-1]{\frac{\delta}{(n-1)!}}
  \end{eqnarray*}
when $D\geqslant\delta+1$.

On the other hand, we have
  \begin{equation}\label{upper bound of r_1(n,D)}
    r_1(n,D)=\sum_{j=1}^\delta{D-\delta+n-1+j\choose n-1}\leqslant\frac{\delta(D+\frac{n}{2})^{n-1}}{(n-1)!}=\frac{\delta D^{n-1}}{(n-1)!}\left(1+\frac{n}{2D}\right)^{n-1},
  \end{equation}
  which terminates the proof by an elementary calculation.
\end{proof}
\section{An explicit estimate of determinant}
In this section, we will give an upper bound of the degree of the auxiliary hypersurface determined by Theorem \ref{global determinant}.
\subsection{A uniform lower bound of arithmetic Hilbert-Samuel functions}
Firstly, we refer to a result in \cite{Chen1}, which is an application of the uniform lower bound of the arithmetic Hilbert-Samuel functions to the determinant method.
\begin{prop}[\cite{Chen1}, Propoosition 2.12]\label{evaluation map of points}
 We keep all the notations in \S\ref{basic setting}. Let $X$ be a closed integral subscheme of $\mathbb P^n_K$, $Z=(P_i)_{i\in I}$ be a family of rational points, and
 \[\phi_{Z,D}:F_{D,K}\rightarrow\bigoplus_{i\in I}P^*_i\O_{\mathbb P^n_K}(D)\]
 be the evaluation map. If we have the inequality
  \[\sup_{i\in I}h_{\overline{\O(1)}}(P_i)<\frac{\wmu_{\max}(\F_D)}{D}-\frac{1}{2D}\log r_1(n,D),\]
  where $r_1(n,D)=\rg(F_{D})$ and the height function $h_{\overline{\O(1)}}(\ndot)$ is defined in \eqref{arakelov height}, then the homomorphism $\phi_{Z,D}$ is not able to be injective.
\end{prop}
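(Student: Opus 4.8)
The plan is to argue by contraposition: assume $\phi_{Z,D}$ is injective and deduce $\sup_{i\in I}h_{\overline{\O(1)}}(P_i)\geqslant\wmu_{\max}(\F_D)/D-\tfrac{1}{2D}\log r_1(n,D)$, which negates the hypothesis. First I would choose a non-zero sub-Hermitian vector bundle $\G\subseteq\F_D$ realizing the maximal slope, so $\wmu(\G)=\wmu_{\max}(\F_D)$, and set $r=\rg(G)$. The restriction $\phi_{Z,D}|_{G_K}\colon G_K\to\bigoplus_{i\in I}P_i^*\O_{\P^n_K}(D)$ is still injective, and to each $P_i$ I attach the Hermitian line bundle $\overline{L}_i=\mathcal P_i^*\overline{\O_{\P^n_{\O_K}}(1)}^{\otimes D}$, whose normalized Arakelov degree is $\adeg_n(\overline{L}_i)=D\,h_{\overline{\O(1)}}(P_i)$ by the definition \eqref{arakelov height} of the height of a rational point. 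Proposition \ref{slope of evaluation map} applied to $\phi_{Z,D}|_{G_K}$ then yields a subset $I_0\subseteq I$ with $\#I_0=r$ and
\[\wmu(\G)=\frac{1}{r}\left(\sum_{i\in I_0}\adeg_n(\overline{L}_i)+h\!\left(\wedge^r\bigl(\pr_{I_0}\circ\phi_{Z,D}|_{G_K}\bigr)\right)\right).\]

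The substance of the proof is to bound $h(\wedge^r(\pr_{I_0}\circ\phi_{Z,D}|_{G_K}))$ by $\tfrac{r}{2}\log r_1(n,D)$, which I would do place by place. For $v\in M_{K,f}$, the evaluation is induced by a homomorphism of $\O_K$-modules obtained by evaluating the integral sections in $F_D\subseteq H^0(\mathscr X,\O_{\P^n_{\O_K}}(1)|_{\mathscr X}^{\otimes D})$ at the Zariski closures $\mathcal P_i$; since $\wedge^r G\hookrightarrow\wedge^r F_D$ is compatible with these integral models, $\log\|\wedge^r(\cdots)\|_v\leqslant 0$. For $v\in M_{K,\infty}$, I would first establish the componentwise bound $\|(\phi_{Z,D})_v(t)(P_i)\|_{v,\mathrm{FS}}\leqslant\|t\|_{\F_D,v}$ for $t\in F_{D,v}$: lifting $t$ to a section $s\in E_{D,v}$ (the value at $P_i\in X$ depends only on $t$), one has $\|s(P_i)\|_{v,\mathrm{FS}}\leqslant\|s\|_{v,\sup}\leqslant\|s\|_{v,J}$ by \eqref{definition of sup norm} and \eqref{john norm}, and taking the infimum over lifts gives the claim since $\F_D$ carries the quotient of the John metric. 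Now picking an orthonormal basis $g_1,\dots,g_r$ of $G_v$ for the restricted metric, each $(\phi_{Z,D}|_{G_K})_v(g_j)\in\bigoplus_{i\in I_0}\overline{L}_{i,v}$ has $\ell^2$-norm at most $\sqrt{r}$, so Hadamard's inequality over the $r$ columns gives $\|\wedge^r(\pr_{I_0}\circ\phi_{Z,D}|_{G_K})\|_v\leqslant r^{r/2}\leqslant r_1(n,D)^{r/2}$; summing with weights $[K_v:\Q_v]$ and using $\sum_{v\in M_{K,\infty}}[K_v:\Q_v]=[K:\Q]$ yields $h(\wedge^r(\cdots))\leqslant\tfrac{r}{2}\log r_1(n,D)$.

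Substituting into the slope identity gives
\[\wmu_{\max}(\F_D)=\wmu(\G)\leqslant\frac{1}{r}\sum_{i\in I_0}D\,h_{\overline{\O(1)}}(P_i)+\frac{1}{2}\log r_1(n,D)\leqslant D\sup_{i\in I}h_{\overline{\O(1)}}(P_i)+\frac{1}{2}\log r_1(n,D),\]
and dividing by $D$ contradicts the standing hypothesis, so $\phi_{Z,D}$ cannot be injective. I expect the main obstacle to be the archimedean estimate: one must check carefully that the quotient-of-John metric on $\F_D$ combined with the sup-norm bound for evaluation gives the clean componentwise operator bound $\leqslant 1$, so that no extraneous constant appears, and that the only loss — the factor $r^{r/2}$ producing the $\tfrac{1}{2D}\log r_1(n,D)$ term — is exactly the Hadamard contribution. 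The non-archimedean places and the final bookkeeping are routine applications of the product formula and Proposition \ref{slope of evaluation map}.
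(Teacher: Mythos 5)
Your argument is correct: passing to a maximal-slope sub-Hermitian bundle $\G\subseteq\F_D$, applying Proposition \ref{slope of evaluation map} to the restricted (still injective) evaluation map, and bounding the determinant's height by $0$ at finite places via the integral models and by $\tfrac{r}{2}\log r_1(n,D)$ at archimedean places via the John-metric/Hadamard estimate is exactly the slope-method proof of this statement. The paper itself only cites \cite{Chen1} for this proposition, but your reasoning coincides step for step with the argument the paper does spell out for Theorem \ref{global determinant} (where the same $\frac{1}{2D}\log r_1(n,D)$ loss arises from the metric of John), the only addition being the reduction to the destabilizing subbundle needed to replace $\wmu$ by $\wmu_{\max}$.
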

The uniform lower bound of $\wmu(\overline F_D)$ for all $D\geqslant1$ will play a significant role in the construction of auxiliary hypersurfaces if we want to apply Proposition \ref{evaluation map of points}. In \cite{David_Philippon99}, David and Philippon give an explicit uniform lower bound of $\wmu(\overline F_D)$. This result is reformulated by H. Chen in \cite[Theorem 4.8]{Chen1} by the language of the slope method. In fact, let $X$ be a closed integral subscheme of $\mathbb P^n_K$ of dimension $d$ and degree $\delta$, and $\mathscr X$ be its Zariski closure in $\mathbb P^n_{\O_K}$. The inequality
\begin{equation}\label{uniform lower bound of arithmetic Hilbert-Samuel}
  \frac{\wmu(\F_D)}{D}\geqslant\frac{d!}{\delta(2d+2)^{d+1}}h_{\overline{\O(1)}}(\mathscr X)-\log(n+1)-2^d
\end{equation}
is uniformly verified for all $D\geqslant2(n-d)(\delta-1)+d+2$ (see also \cite[Remark 4.9]{Chen1} for some minor modifications), where $h_{\overline{\O(1)}}(\mathscr X)$ follows the definition in Definition \ref{arakelov height of projective variety}.

By Proposition \ref{evaluation map of points}, all the rational points with small heights in a projective variety can be covered by one hypersurface which does not contain the generic point of the original variety. The following result gives a numerical description of this observation.
\begin{prop}\label{naive siegal lemma}
  Let $X$ be an integral hypersurface of degree $\delta$ in $\mathbb P^n_K$, the constant $\mathcal H_n=1+\cdots+\frac{1}{n}$ and the constant
  \begin{equation}\label{constant C_1}
    C_1(n)=-\frac{(2n)^n}{(n-1)!}\left(\log 2+5\log(n+1)-\frac{1}{2}\mathcal H_n\right)-\frac{3}{2}\log(n+1)-2^{n-1}.
  \end{equation}
 If
  \[\frac{\log B}{[K:\Q]}<\frac{(n-1)!}{\delta(2n)^n}h(X)+C_1(n),\]
  then there exists a hypersurface of degree smaller than $2\delta+n-1$, which contains all rational points in $S(X;B)$ but does not contain the generic point of $X$, where we use the height function defined in \eqref{arakelov height}.
\end{prop}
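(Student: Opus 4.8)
The plan is to apply Proposition~\ref{evaluation map of points} to the finite family $Z=S(X;B)$ (finite by the Northcott property) with the single integer $D=2\delta+n-1$. Since $X$ is a hypersurface, $d:=\dim X=n-1$, so $D=2(n-d)(\delta-1)+d+2$ is exactly the threshold for which the uniform lower bound \eqref{uniform lower bound of arithmetic Hilbert-Samuel} is available; specializing that bound to $d=n-1$ (so that $2d+2=2n$, $(2d+2)^{d+1}=(2n)^n$, $d!=(n-1)!$, $2^d=2^{n-1}$) and using $\wmu(\F_D)\leqslant\wmu_{\max}(\F_D)$ gives
\[\frac{\wmu_{\max}(\F_D)}{D}\ \geqslant\ \frac{(n-1)!}{\delta(2n)^n}\,h_{\overline{\O(1)}}(\mathscr X)-\log(n+1)-2^{n-1}.\]
If the right-hand side of the hypothesis of Proposition~\ref{evaluation map of points} exceeds $\sup_i h_{\overline{\O(1)}}(P_i)$, then $\phi_{Z,D}$ is not injective, and any nonzero element of its kernel lifts to a section of $\O(D)$ whose zero locus is a hypersurface of degree $D=2\delta+n-1$ containing $S(X;B)$ but not the generic point of $X$.

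Next I would estimate both sides of that hypothesis. For the left side: for $P\in S(X;B)$ one has $h(P)\leqslant\frac{\log B}{[K:\Q]}$ by definition of $S(X;B)$, and by the comparison $|h(P)-h_{\overline{\O(1)}}(P)|\leqslant\frac12\log(n+1)$ this gives $\sup_i h_{\overline{\O(1)}}(P_i)\leqslant\frac{\log B}{[K:\Q]}+\frac12\log(n+1)$. For the right side: insert Proposition~\ref{comparing heights} in the form $h_{\overline{\O(1)}}(\mathscr X)\geqslant h(X)-\delta\bigl(\log 2+5\log(n+1)-\tfrac12\mathcal H_n\bigr)$. Since the coefficient of $h_{\overline{\O(1)}}(\mathscr X)$ above is $\frac{(n-1)!}{\delta(2n)^n}$, the factor $\delta$ cancels, and one obtains a lower bound of the shape $\frac{\wmu_{\max}(\F_D)}{D}-\frac{1}{2D}\log r_1(n,D)\geqslant\frac{(n-1)!}{\delta(2n)^n}h(X)+C'(n,\delta)$, where
\[C'(n,\delta)=-\frac{(n-1)!}{(2n)^n}\Bigl(\log 2+5\log(n+1)-\tfrac12\mathcal H_n\Bigr)-\log(n+1)-2^{n-1}-\frac{1}{2D}\log r_1(n,D).\]

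It then remains to check $C'(n,\delta)\geqslant C_1(n)+\tfrac12\log(n+1)$; after cancelling the common $-\log(n+1)-2^{n-1}$ this is the elementary, $\delta$-uniform inequality
\[\frac{1}{2D}\log r_1(n,D)\ \leqslant\ \Bigl(\frac{(2n)^n}{(n-1)!}-\frac{(n-1)!}{(2n)^n}\Bigr)\Bigl(\log 2+5\log(n+1)-\tfrac12\mathcal H_n\Bigr).\]
The right-hand side is a fixed positive quantity (note $(2n)^n\geqslant(n-1)!$ and $5\log(n+1)>\tfrac12\mathcal H_n$), while $r_1(n,D)\leqslant\binom{n+D}{n}\leqslant(2D)^n/n!$ together with $D=2\delta+n-1\geqslant n+1$ and the monotonicity of $t\mapsto(\log t)/t$ give $\frac{1}{2D}\log r_1(n,D)\leqslant\frac{n\log(2n+2)}{2n+2}$, which is comfortably smaller. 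Granting this, the assumption $\frac{\log B}{[K:\Q]}<\frac{(n-1)!}{\delta(2n)^n}h(X)+C_1(n)$ forces $\sup_i h_{\overline{\O(1)}}(P_i)<\frac{\wmu_{\max}(\F_D)}{D}-\frac{1}{2D}\log r_1(n,D)$, so Proposition~\ref{evaluation map of points} applies and the conclusion follows as in the first paragraph.

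The only genuinely delicate point is this last bookkeeping step: one must confirm that the crude, $\delta$-uniform bound on $\frac{1}{2D}\log r_1(n,D)$ is indeed absorbed by the (deliberately wasteful) factor $\frac{(2n)^n}{(n-1)!}$ built into $C_1(n)$. Everything else — the choice $D=2\delta+n-1$, the two height comparisons, and the cancellation of $\delta$ — is essentially forced; it is precisely in matching the output of the inequality chain against the stated shape of $C_1(n)$, via Proposition~\ref{comparing heights} and the estimate of $r_1(n,D)$, that the constant $C_1(n)$ is consumed.
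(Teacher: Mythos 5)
Your proof is correct and follows essentially the same route as the paper: set $D=2\delta+n-1$, argue by contradiction via Proposition~\ref{evaluation map of points}, plug in the uniform lower bound \eqref{uniform lower bound of arithmetic Hilbert-Samuel} specialized to $d=n-1$, and pass from $h_{\overline{\O(1)}}(\mathscr X)$ to $h(X)$ with Proposition~\ref{comparing heights}. Two small cosmetic differences from the paper's version: the statement already declares that the height used in $S(X;B)$ is the Arakelov height \eqref{arakelov height}, so the extra $\tfrac12\log(n+1)$ you add from the comparison $|h(P)-h_{\overline{\O(1)}}(P)|\leqslant\tfrac12\log(n+1)$ is not needed (harmless here, since you absorb it into the bookkeeping, which has ample slack); and the paper bounds $r_1(n,D)\leqslant(n+1)^D$ to get $\tfrac{1}{2D}\log r_1(n,D)\leqslant\tfrac12\log(n+1)$, which is exactly the piece the $-\tfrac32\log(n+1)$ in $C_1(n)$ is set up to swallow, whereas you use the sharper $r_1(n,D)\leqslant(2D)^n/n!$ and then fold everything into the single $\delta$-uniform inequality at the end — both close because the coefficient $\tfrac{(2n)^n}{(n-1)!}$ built into $C_1(n)$ is deliberately much larger than the $\tfrac{(n-1)!}{(2n)^n}$ actually produced by Proposition~\ref{comparing heights}.
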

\begin{proof}
  If there does not exist such a hypersurface, the evaluation map $\phi_{Z,D}$ in Proposition \ref{evaluation map of points} is injective. On the other hand, by \eqref{uniform lower bound of arithmetic Hilbert-Samuel}, Proposition \ref{comparing heights} and the fact that
  \[r_1(n,D)\leqslant{n+D\choose n}\leqslant(n+1)^D\]
  is uniformly verified for all $n, D\geqslant1$, we obtain
  \begin{eqnarray*}
    \frac{\log B}{[K:\Q]}&<&\frac{(n-1)!}{\delta(2n)^n}h(X)+C_1(n)\\
    &\leqslant&\frac{(n-1)!}{\delta(2n)^n}h_{\overline{\O(1)}}(\mathscr X)-\frac{3}{2}\log(n+1)-2^{n-1}\\
    &\leqslant&\frac{\wmu_{\max}(\F_D)}{D}-\frac{1}{2D}\log r_1(n,D),
  \end{eqnarray*}
  which contradicts to Proposition \ref{evaluation map of points}.
  \end{proof}
  \begin{rema}
    With all the notations in Proposition \ref{naive siegal lemma}. By the arithmetic Hilbert-Samuel Theorem of arithmetic ample line bundles (cf. \cite[Theorem 8]{Gillet-Soule}, \cite[Theorem 1.4]{Zhang95} and \cite[Th\'eor\`eme principal]{Abbes-Bouche}), we have
    \[\wmu(\overline F_D)=\frac{h_{\overline{\O(1)}}(\mathscr X)}{n\delta}D+o(D)\]
    for $D$ tends into infinity. So it is expected that we can obtain a better uniform lower bound of $\wmu(\F_D)$ than that in \eqref{uniform lower bound of arithmetic Hilbert-Samuel}. If we have a better explicit lower bound, we can improve the bound given in Proposition \ref{naive siegal lemma}.
  \end{rema}
  \subsection{Estimate of the determinant}
  In the global determinant method, for each geometrically integral hypersurface, we allow only one auxiliary hypersurface to cover its rational points with bounded height not containing the generic point of the original hypersurface, and we optimize the degree of this auxiliary hypersurface.

   In this part, we will give an upper bound of the degree of the auxiliary hypersurface determined in Theorem \ref{global determinant}, where the size of non-geometrically integral reductions and the height of the original hypersurface will be involved.

   Before the statement of the main theorem in this paragraph, we will introduce two constants depending on the number field $K$ and the positive integer $n\geqslant2$, which will be used in the estimate of the determinant.

   Let $K$ be a number field, we denote
   \begin{equation}\label{kappa_1(K)}
     \kappa_1(K)=\sup_{x\in\mathbb R^+}\frac{\epsilon_1(K,x)}{x},
   \end{equation}
   where $\epsilon_1(K,x)$ is introduced in \eqref{implicit estimate of theta_K}. By \cite[Theorem 2.2]{Rosen1999}, the above supremum exists and $\kappa_1(K)$ depends on $K$ only.

   Let $\delta\geqslant1$ and $n\geqslant2$ be two integers, we denote
   \begin{equation}\label{kappa_2(K)}
       \kappa_2(K,n)=\sup_{\delta\geqslant1}\left\{-3\log3+\frac{2n^2}{3\sqrt{3}}+2n^2\delta^2\epsilon_3(K,27\delta^4)+2\epsilon_2(K)\right\},
\end{equation}
where $\epsilon_2(K)$ is defined in \eqref{implicit estimate of psi_K}, and $\epsilon_3(K,x)$ is defined in \eqref{definition of epsilon_3}. By taking \cite[Theorem 2.2]{Rosen1999} into the estimate of \eqref{definition of epsilon_3}, the above supremum exists and $\kappa_2(K,n)$ depends on $K$ and $n$ only.
  \begin{theo}\label{upper bound of degree of general case}
    Let $K$ be a number field. Let $X$ be a geometrically integral hypersurface in $\mathbb P^n_K$ of degree $\delta\geqslant2$, and $S(X;B)$ be the set of rational points in $X$ whose height is smaller than $B$ with respect to the above closed immersion, see \eqref{arakelov height} for the definition of the height function used above. Then there exists a hypersurface in $\mathbb P^n_K$ of degree smaller than
    \[e^{C_2(n,K)}B^{{n}/{\left((n-1)\delta^{1/(n-1)}\right)}}\delta^{4-{1}/{(n-1)}}\frac{b'(\mathscr X)}{H_K(X)^{\frac{n!}{(n-1)(2n)^n}\delta^{-1-{1}/{(n-1)}}}}\]
    which covers $S(X;B)$ but does not contain the generic point of $X$, where the constant
    \begin{eqnarray*}
     C_2(n,K)&=&\frac{nC_1(n)[K:\Q]}{(n-1)\sqrt[n-1]{2}}+\kappa_2(K,n)+\frac{\log(n-1)!}{n-1}\\
    & & +3+\frac{n^3+2n^2+n-4}{2(n^2-1)\sqrt[n-1]{(n-1)!}}\left(1+\frac{n}{4}\right)\left(1+\kappa_1(K)\right),
    \end{eqnarray*}
    the constant $C_1(n)$ is defined in \eqref{constant C_1}, $b'(\mathscr X)$ is defined in Proposition \ref{estimate of log p/p for non-geometrically integral 2}, $\kappa_1(K)$ is defined in \eqref{kappa_1(K)}, $\kappa_2(K,n)$ is defined in \eqref{kappa_2(K)}, and the height $H_K(X)$ of $X$ is defined in Definition \ref{classic height of hypersurface}.
  \end{theo}
  \begin{proof}By Proposition \ref{naive siegal lemma}, we divide the proof into two parts.

  \textbf{I. Case of large height varieties. - }If
    \[\frac{\log B}{[K:\Q]}<\frac{(n-1)!}{\delta(2n)^n}h(X)+C_1(n),\]
where the constant $C_1(n)$ is defined in \eqref{constant C_1} and $h(X)$ is defined in Definition \ref{classic height of hypersurface}. Then by Proposition \ref{naive siegal lemma}, $S(X;B)$ can be covered by a hypersurface of degree no more than $2\delta+n-1$ which does not contain the generic point of $X$. By an elementary calculation, we obtain that $2\delta+n-1$ is smaller than the bound provided in the statement of the theorem, for $n\geqslant2$, $\delta\geqslant1$, $b'(\mathscr X)\geqslant1$, and $H_K(X)\geqslant1$.

\textbf{II. Case of small height varieties. - }For the case of
\[\frac{\log B}{[K:\Q]}\geqslant\frac{(n-1)!}{\delta(2n)^n}h(X)+C_1(n),\]
which is equivalent to
\[h(X)\leqslant \frac{\delta(2n)^n}{(n-1)!}\cdot\left(\frac{\log B}{[K:\Q]}-C_1(n)\right),\]
   we will treat it as following. We keep all the notations in Theorem \ref{global determinant}, and we suppose $D\geqslant3\delta\log\delta+n-1\geqslant2\delta+n-1$ from now on. We denote the set
    \begin{eqnarray*}
      \mathcal R(\mathscr X)&=&\{\p\in\spm\O_K|\;27\delta^4\leqslant N(\p)\leqslant r_1(n,D)^\frac{1}{n-1},\\
      & &\;\mathscr X_{\p}\rightarrow\spec\f_\p\hbox{ is geometrically integral}\},
    \end{eqnarray*}
    and we apply Theorem \ref{global determinant} to the reductions at $\mathcal R(\mathscr X)$. If there does not exist such a hypersurface, then by Theorem \ref{global determinant} applied in the above sense, we have
    \begin{eqnarray*}
      &&\frac{\log B}{[K:\Q]}\geqslant\frac{\wmu(\F_D)}{D}-\frac{\log r_1(n,D)}{2D}\\
      &&\;+\frac{1}{[K:\Q]}\sum_{\p\in \mathcal R(\mathscr X)}\left(\frac{(n-1)!^{\frac{1}{n-1}}(n-1)r_1(n,D)^\frac{1}{n-1}}{nDn(\mathscr X_{\p})^\frac{1}{n-1}}-\frac{n^3+2n^2+n-4}{2Dn(n+1)}\right)\log N(\p).
    \end{eqnarray*}
    From the explicit lower bound of $\wmu(F_D)$ provided at \eqref{uniform lower bound of arithmetic Hilbert-Samuel} and Proposition \ref{naive siegal lemma}, we deduce
    \begin{eqnarray}\label{lower bound of the determinant}
    & &\frac{\log B}{[K:\Q]}-\frac{(n-1)!}{\delta(2n)^n}h(X)+C_1(n)\\
    &\geqslant&\frac{(n-1)!^{\frac{1}{n-1}}(n-1)r_1(n,D)^\frac{1}{n-1}}{nD[K:\Q]}\sum_{\p\in\mathcal R(\mathscr X)}\frac{\log N(\p)}{n(\mathscr X_{\p})^\frac{1}{n-1}}\nonumber\\
    & &\;-\frac{n^3+2n^2+n-4}{2nD(n+1)[K:\Q]}\sum_{\p\in\mathcal R(\mathscr X)}\log N(\p).\nonumber\end{eqnarray}

    \textbf{II-1. Estimate of $\sum\limits_{\p\in\mathcal R(\mathscr X)}\frac{\log N(\p)}{n(\mathscr X_{\p})^\frac{1}{n-1}}$. - } In order to estimate $\sum\limits_{\p\in\mathcal R(\mathscr X)}\frac{\log N(\p)}{n(\mathscr X_{\p})^\frac{1}{n-1}}$ in \eqref{lower bound of the determinant}, by \eqref{estimate of sqrt[n-1]{n(X)}}, we have
    \[\sum\limits_{\p\in\mathcal R(\mathscr X)}\frac{\log N(\p)}{n(\mathscr X_{\p})^\frac{1}{n-1}}\geqslant\sum_{\p\in\mathcal R(\mathscr X)}\frac{\log N(\p)}{N(\p)}-n^2\delta^2\sum_{\p\in\mathcal R(\mathscr X)}\frac{\log N(\p)}{N(\p)^\frac{3}{2}}.\]

    For the estimate of $\sum\limits_{\p\in\mathcal R(\mathscr X)}\frac{\log N(\p)}{N(\p)}$, we denote
    \begin{eqnarray*}
      \mathcal Q'(\mathscr X)&=&\{\p\in\spm\O_K|\;27\delta^4\leqslant N(\p)\leqslant r_1(n,D)^\frac{1}{n-1},\\
      & &\;\mathscr X_{\f_\p}\rightarrow\spec\f_\p\hbox{ is not geometrically integral}\}.
    \end{eqnarray*}
   Then by \eqref{implicit estimate of psi_K}, we have
    \begin{eqnarray*}
      & &\sum_{\p\in\mathcal R(\mathscr X)}\frac{\log N(\p)}{N(\p)}=\sum_{27\delta^4\leqslant N(\p)\leqslant r_1(n,D)^\frac{1}{n-1}}\frac{\log N(\p)}{N(\p)}-\sum_{\p\in\mathcal Q'(\mathscr X)}\frac{\log N(\p)}{N(\p)}\\
      &\geqslant&\frac{1}{n-1}\log r_1(n,D)-3\log3-4\log\delta-2\epsilon_2(K)-\log\left(b'(\mathscr X)\right),
    \end{eqnarray*}
 where the notation $b'(\mathscr X)$ is introduced in Proposition \ref{estimate of log p/p for non-geometrically integral 2}, and $\epsilon_2(K)$ is defined in \eqref{implicit estimate of psi_K}.

    For the term $\sum\limits_{\p\in\mathcal R(\mathscr X)}\frac{\log N(\p)}{N(\p)^\frac{3}{2}}$, it is equal to zero when $r_1(n,D)^{\frac{1}{n-1}}\leqslant27\delta^4$. When $r_1(n,D)^{\frac{1}{n-1}}>27\delta^4$, by \eqref{implicit estimate of phi_K}, we have
    \begin{eqnarray*}
      \sum\limits_{\p\in\mathcal R(\mathscr X)}\frac{\log N(\p)}{N(\p)^\frac{3}{2}}&\leqslant&\sum\limits_{27\delta^4\leqslant N(\p)\leqslant r_1(n,D)^{\frac{1}{n-1}}}\frac{\log N(\p)}{N(\p)^\frac{3}{2}}\\
      &\leqslant&\frac{2}{3\sqrt{3}\delta^2}-2r_1(n,D)^{-\frac{1}{2(n-1)}}+2\epsilon_3(K,27\delta^4).
    \end{eqnarray*}

    By the above two estimates, we obtain
    \begin{eqnarray}\label{estimate of sum N_p/log N_p}
      & &\sum\limits_{\p\in\mathcal R(\mathscr X)}\frac{\log N(\p)}{n(\mathscr X_{\p})^\frac{1}{n-1}}\\
      &\geqslant&\frac{1}{n-1}\log r_1(n,D)-3\log3-4\log\delta-\log\left(b'(\mathscr X)\right)-2\epsilon_2(K)\nonumber\\
      & &\;-n^2\delta^2\left(\frac{2}{3\sqrt{3}\delta^2}-2r_1(n,D)^{-\frac{1}{2(n-1)}}+2\epsilon_3(K,27\delta^4)\right)\nonumber
    \end{eqnarray}
    by combining the above two inequalities.

   \textbf{II-2. Estimate of $\sum\limits_{\p\in\mathcal R(\mathscr X)}\log N(\p)$. - } For the estimate of $\sum\limits_{\p\in\mathcal R(\mathscr X)}\log N(\p)$, by \eqref{implicit estimate of theta_K}, we have
    \begin{eqnarray}\label{estimate of sum log N_p0}
      & &\frac{1}{D}\sum\limits_{\p\in\mathcal R(\mathscr X)}\log N(\p)\leqslant\frac{1}{D}\sum\limits_{N(\p)\leqslant r_1(n,D)^\frac{1}{n-1}}\log N(\p)\\
      &\leqslant&\frac{1}{D}\left(r_1(n,D)^\frac{1}{n-1}+\epsilon_1\left(K,r_1(n,D)^\frac{1}{n-1}\right)\right),\nonumber
    \end{eqnarray}
    where $\epsilon_1(K,x)$ is defined in \eqref{implicit estimate of theta_K}.

    \textbf{II-3. Deducing the contradiction. - } We take \eqref{estimate of sum N_p/log N_p} and \eqref{estimate of sum log N_p0} into \eqref{lower bound of the determinant}, and we do some elementary calculations. Then the inequality
       \begin{eqnarray}\label{lower bound of the determinant2}
    & &\frac{\log B}{[K:\Q]}-\frac{(n-1)!}{\delta(2n)^n}h(X)+C_1(n)\\
    &\geqslant&\frac{(n-1)!^{\frac{1}{n-1}}(n-1)}{n[K:\Q]}\cdot\frac{r_1(n,D)^\frac{1}{n-1}}{D}\Bigg(\frac{1}{n-1}\log r_1(n,D)-\log\left(b'(\mathscr X)\right)-4\log\delta\nonumber\\
      & &\;3\log3-\frac{2n^2}{3\sqrt{3}}+\frac{2n^2\delta^2}{r_1(n,D)^{\frac{1}{2(n-1)}}}-2n^2\delta^2\epsilon_3(K,27\delta^4)-2\epsilon_2(K)\Bigg)\nonumber\\
    & &\;-\frac{n^3+2n^2+n-4}{2n(n+1)[K:\Q]}\cdot\frac{r_1(n,D)^\frac{1}{n-1}}{D}\left(1+\frac{\epsilon_1\left(K,r_1(n,D)^\frac{1}{n-1}\right)}{r_1(n,D)^{\frac{1}{n-1}}}\right)\nonumber\end{eqnarray}
is uniformly verified for all $D\geqslant3\delta\log\delta+n-1\geqslant2\delta+n-1$.

      From \eqref{lower bound of r_1(n,D)} in Lemma \ref{explicit bounds of geometric Hilbert-Samuel function}, we have
    \begin{equation}\label{lower bound of log r_1(n,D)}
     \frac{1}{n-1} \log r_1(n,D)\geqslant\log D+\frac{1}{n-1}\log\delta-\frac{1}{n-1}\log\left(n-1\right)!
    \end{equation}
    when $D\geqslant2\delta+n-1$.

    We take Lemma \ref{explicit bounds of geometric Hilbert-Samuel function} and \eqref{lower bound of log r_1(n,D)} into \eqref{lower bound of the determinant2}, and by the fact
    \[\frac{\epsilon_1\left(K,r_1(n,D)^\frac{1}{n-1}\right)}{r_1(n,D)^{\frac{1}{n-1}}}\leqslant\kappa_1(K)\]
    and
    \[3\log3-\frac{2n^2}{3\sqrt{3}}+\frac{2n^2\delta^2}{r_1(n,D)^{\frac{1}{2(n-1)}}}-2n^2\delta^2\epsilon_3(K,27\delta^4)-2\epsilon_2(K)\geqslant-\kappa_2(n,K),\]
     we obtain
           \begin{eqnarray}\label{lower bound of the determinant3}
    & &\frac{n}{(n-1)\sqrt[n-1]{\delta}}\left(\log B-\frac{[K:\Q](n-1)!}{\delta(2n)^n}h(X)+C_1(n)[K:\Q]\right)\\
    &\geqslant&\left(1-\frac{\delta-2}{D}\right)\Bigg(\log D-\left(4-\frac{1}{n-1}\right)\log\delta-\log\left(b'(\mathscr X)\right)\nonumber\\
      & &\;-\kappa_2(n,K)-\frac{\log(n-1)!}{n-1}\Bigg)\nonumber\\
    & &\;-\frac{n^3+2n^2+n-4}{2(n^2-1)\sqrt[n-1]{(n-1)!}}\cdot\left(1+\frac{n}{4}\right)\left(1+\kappa_1(K)\right).\nonumber\end{eqnarray}

    When $D\geqslant 3\delta\log\delta+n-1\geqslant2\delta+n-1$ and $\delta\geqslant2$, we have
    \begin{eqnarray*}
   & &\frac{\delta-2}{D}\Bigg(\log D-\left(4-\frac{1}{n-1}\right)\log\delta-\frac{\log(n-1)!}{n-1}-\log\left(b'(\mathscr X)\right)-\kappa_2(K,n)\Bigg)\\
   &\leqslant&\frac{\delta-2}{D}\log D\leqslant3
    \end{eqnarray*}
    by an elementary calculation. We take the above inequality into \eqref{lower bound of the determinant3}, and then we obtain
               \begin{eqnarray*}
    & &\frac{n}{(n-1)\sqrt[n-1]{\delta}}\left(\log B-\frac{[K:\Q](n-1)!}{\delta(2n)^n}h(X)+C_1(n)[K:\Q]\right)\\
    &\geqslant&\log D-\left(4-\frac{1}{n-1}\right)\log\delta-\log\left(b'(\mathscr X)\right)-\kappa_2(K,n)-3-\frac{\log(n-1)!}{n-1}\nonumber\\
    & &\;-\frac{n^3+2n^2+n-4}{2(n^2-1)\sqrt[n-1]{(n-1)!}}\cdot\left(1+\frac{n}{4}\right)\left(1+\kappa_1(K)\right),\nonumber\end{eqnarray*}
    which deduces
    \begin{eqnarray*}
      \log D&\leqslant& \frac{n\log B}{(n-1)\sqrt[n-1]{\delta}}-\frac{[K:\Q]n!}{\delta^{1+\frac{1}{n-1}}(n-1)(2n)^n}h(X)+\log\left(b'(\mathscr X)\right)\\
      & &+\left(4-\frac{1}{n-1}\right)\log\delta+C_2(n,K)
    \end{eqnarray*}
    with the constant $C_2(n,K)$ in the statement of this theorem, and it leads to the contradiction.
  \end{proof}
    \subsection{Control of auxiliary hypersurfaces}
  The following two upper bounds of the degree of the auxiliary hypersurface are deduced from Theorem \ref{upper bound of degree of general case} directly.
  \begin{coro}\label{uniform upper bound of the degree of auxiliary hypersurface}
    We keep all the notations and conditions in Theorem \ref{upper bound of degree of general case}. Then there exists a hypersurface of degree smaller than
     \[C_3(n,K)\delta^3B^{{n}/{\left((n-1)\delta^{1/(n-1)}\right)}},\]
     which covers $S(X;B)$ but does not contain the generic point of $X$, where the constant
\[C_3(n,K)=e^{C_2(n,K)}\frac{(n+6)(n-1)(2n)^n\exp\left(2\epsilon_2(K)-3\log3+[K:\Q]\right)}{n!},\]
and $C_2(n,K)$ is defined in Theorem \ref{upper bound of degree of general case}.
  \end{coro}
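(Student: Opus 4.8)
The plan is to derive the bound directly from Theorem~\ref{upper bound of degree of general case} by controlling, uniformly in $X$, the factor
\[
\frac{b'(\mathscr X)}{H_K(X)^{\frac{n!}{(n-1)(2n)^n}\delta^{-1-1/(n-1)}}} .
\]
Put $\alpha=\frac{n!}{(n-1)(2n)^n}\delta^{-1-1/(n-1)}>0$. A defining polynomial of $X$ has at least one nonzero coefficient, so the product formula gives $H_K(X)\geqslant1$; hence $h(X)=\frac{1}{[K:\Q]}\log H_K(X)\geqslant0$ and $H_K(X)^{\alpha}=\exp\bigl(\alpha[K:\Q]h(X)\bigr)$. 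First I would invoke Proposition~\ref{estimate of log p/p for non-geometrically integral 2}, which bounds $b'(\mathscr X)$ by
\[
\exp\bigl(2\epsilon_2(K)-3\log3+[K:\Q]\bigr)\,(\delta^{-2}-\delta^{-4})\Bigl(h(X)+3\log\delta+\delta\log3+\log\binom{n+\delta}{\delta}\Bigr).
\]

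The crux is to absorb the a priori unbounded quantity $h(X)$ via the elementary inequality $ue^{-u}\leqslant e^{-1}$, valid for all $u\geqslant0$. Writing $C(n,\delta)=3\log\delta+\delta\log3+\log\binom{n+\delta}{\delta}$ and applying this with $u=\alpha[K:\Q]h(X)$ (and $e^{-\alpha[K:\Q]h(X)}\leqslant1$ for the constant term),
\[
\bigl(h(X)+C(n,\delta)\bigr)e^{-\alpha[K:\Q]h(X)}\leqslant\frac{1}{e\,\alpha[K:\Q]}+C(n,\delta)=\frac{(n-1)(2n)^n}{e\,n!\,[K:\Q]}\,\delta^{1+1/(n-1)}+C(n,\delta).
\]
Together with $\delta^{-2}-\delta^{-4}\leqslant\delta^{-2}$ this gives
\[
\frac{b'(\mathscr X)}{H_K(X)^{\alpha}}\leqslant\exp\bigl(2\epsilon_2(K)-3\log3+[K:\Q]\bigr)\,\delta^{-2}\Bigl(\frac{(n-1)(2n)^n}{e\,n!\,[K:\Q]}\,\delta^{1+1/(n-1)}+C(n,\delta)\Bigr).
\]

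Finally I would multiply through by the remaining factors $e^{C_2(n,K)}\,\delta^{4-1/(n-1)}\,B^{n/((n-1)\delta^{1/(n-1)})}$ supplied by Theorem~\ref{upper bound of degree of general case}. The powers of $\delta$ collapse: $\delta^{-2}\cdot\delta^{1+1/(n-1)}\cdot\delta^{4-1/(n-1)}=\delta^{3}$, and the $C(n,\delta)$ term becomes $\delta^{2-1/(n-1)}C(n,\delta)$. It then remains to verify, by routine estimates for $n\geqslant2$, $\delta\geqslant2$ (e.g.\ $3\log\delta\leqslant3\delta$, $\log\binom{n+\delta}{\delta}\leqslant n+\delta\leqslant n\delta$, and $\delta\leqslant\delta^{1+1/(n-1)}$, whence $C(n,\delta)\leqslant(n+5)\delta^{1+1/(n-1)}$; combined with $\frac{1}{e[K:\Q]}\leqslant1$ and $\frac{(n-1)(2n)^n}{n!}\geqslant1$), that the resulting coefficient is at most $\frac{(n+6)(n-1)(2n)^n}{n!}\exp\bigl(2\epsilon_2(K)-3\log3+[K:\Q]\bigr)$; multiplying by $e^{C_2(n,K)}$ then reproduces $C_3(n,K)\,\delta^{3}B^{n/((n-1)\delta^{1/(n-1)})}$, as claimed. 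The only substantive point is the uniform absorption of $h(X)$ through $ue^{-u}\leqslant e^{-1}$; the remainder is bookkeeping to pin down the constant $C_3(n,K)$.
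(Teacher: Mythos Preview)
Your proposal is correct and follows essentially the same approach as the paper: both start from Proposition~\ref{estimate of log p/p for non-geometrically integral 2}, and both absorb the unbounded term $h(X)$ using the elementary bound $ue^{-u}\leqslant e^{-1}$ (equivalently, $\log G_K(X)/G_K(X)$ is bounded, where $G_K(X)=H_K(X)^{\alpha}$). The only difference is cosmetic bookkeeping: the paper first replaces $h(X)+C(n,\delta)$ by $(n+6)\max\{\delta^{-2}h(X),\delta^{-1}\}$ and then divides by $G_K(X)$, whereas you keep the sum and bound the two pieces separately; both routes land on the same factor $\frac{(n+6)(n-1)(2n)^n}{n!}\,\delta^{-1+1/(n-1)}$ and hence the same constant $C_3(n,K)$.
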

  \begin{proof}
    By the upper bound of $b'(\mathscr X)$ given in Proposition \ref{estimate of log p/p for non-geometrically integral 2}, we have
\begin{eqnarray*}
  b'(\mathscr X)&\leqslant&\exp\left(2\epsilon_2(K)-3\log3+[K:\Q]\right)\\
  & &\cdot(\delta^{-2}-\delta^{-4})\left(h(X)+\left(3\log\delta+\delta\log3+\log{n+\delta\choose\delta}\right)\right)\\
  &\leqslant&\exp\left(2\epsilon_2(K)-3\log3+[K:\Q]\right)\delta^{-2}\left(h(X)+(3\delta+2\delta+\delta\log(n+1))\right)\\
  &\leqslant&(n+6)\exp\left(2\epsilon_2(K)-3\log3+[K:\Q]\right)\max\{\delta^{-2}h(X),\delta^{-1}\}.
  \end{eqnarray*}
We denote by $G_K(X)=H_K(X)^{\frac{n!}{(n-1)(2n)^n}\delta^{-1-\frac{1}{n-1}}}\geqslant1$ for simplicity, where the last inequality is obtained by definition directly. Then by an elementary calculation, we have
\begin{eqnarray*}
  & &\frac{b'(\mathscr X)}{H_K(X)^{\frac{n!}{(n-1)(2n)^n}\delta^{-1-\frac{1}{n-1}}}}\\
  &\leqslant&(n+6)\exp\left(2\epsilon_2(K)-3\log3+[K:\Q]\right)\frac{\max\left\{\frac{\delta^{-1+\frac{1}{n-1}}(n-1)(2n)^n}{n![K:\Q]}\log G_K(X),\delta^{-1}\right\}}{G_K(X)}\\
  &\leqslant&\frac{(n+6)(n-1)(2n)^n\exp\left(2\epsilon_2(K)-3\log3+[K:\Q]\right)}{n!}\delta^{-1+\frac{1}{n-1}}.
\end{eqnarray*}
We have the assertion by taking the above estimate into Theorem \ref{upper bound of degree of general case}.
  \end{proof}

  Compared with Corollary \ref{uniform upper bound of the degree of auxiliary hypersurface}, the result below has a better dependence on the degree of the original hypersurface but a little worse dependence on the bound of heights.
  \begin{coro}\label{uniform upper bound of the degree of auxiliary hypersurface2}
        We keep all the notations and conditions in Theorem \ref{upper bound of degree of general case}. Then there exists a hypersurface of degree smaller than \[\frac{C'_3(n,K)\delta^{3-\frac{1}{n-1}}}{H_K(X)^{\frac{n!}{(n-1)(2n)^n}\delta^{-1-\frac{1}{n-1}}}}B^{{n}/{\left((n-1)\delta^{1/(n-1)}\right)}}\max\left\{\frac{\log B}{[K:\Q]},1\right\},\] which covers $S(X;B)$ but does not contain the generic point of $X$, where the constant
\[C'_3(n,K)=-e^{C_2(n,K)}\frac{C_1(n)(n+6)(2n)^{n}\exp\left(2\epsilon_2(K)-3\log3+[K:\Q]\right)}{(n-1)!},\]
$C_1(n)$ is defined in \eqref{constant C_1}, $C_2(n,K)$ is defined in Theorem \ref{upper bound of degree of general case}, and $H_K(X)$ is defined in Definition \ref{classic height of hypersurface}.
  \end{coro}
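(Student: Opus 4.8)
The plan is to deduce this from Theorem~\ref{upper bound of degree of general case} exactly as Corollary~\ref{uniform upper bound of the degree of auxiliary hypersurface} is deduced from it, but at the last step to convert the height $h(X)$ into $\log B$ by means of the case distinction used in the proof of Theorem~\ref{upper bound of degree of general case}, instead of absorbing it with the elementary inequality $\frac{\log t}{t}\leqslant1$.

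First I would treat the large-height case $\frac{\log B}{[K:\Q]}<\frac{(n-1)!}{\delta(2n)^n}h(X)+C_1(n)$: there Proposition~\ref{naive siegal lemma} already produces a hypersurface of degree $<2\delta+n-1$ covering $S(X;B)$ and not containing the generic point of $X$, and an elementary estimate --- using $\delta\geqslant2$, $n\geqslant2$, $B\geqslant1$, $H_K(X)\geqslant1$ --- shows $2\delta+n-1$ stays below the bound asserted in the statement. In the remaining small-height case one has, by the proof of Theorem~\ref{upper bound of degree of general case},
\[h(X)\leqslant\frac{\delta(2n)^n}{(n-1)!}\left(\frac{\log B}{[K:\Q]}-C_1(n)\right).\]
Then I start from the degree bound of Theorem~\ref{upper bound of degree of general case} and insert the explicit bound for $b'(\mathscr X)$ obtained inside the proof of Corollary~\ref{uniform upper bound of the degree of auxiliary hypersurface}, namely $b'(\mathscr X)\leqslant(n+6)\exp(2\epsilon_2(K)-3\log3+[K:\Q])\max\{\delta^{-2}h(X),\delta^{-1}\}$.

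Using the small-height inequality together with the elementary facts that $(2n)^n/(n-1)!\geqslant1$ and $-C_1(n)\geqslant1$ for $n\geqslant2$, one bounds $\max\{\delta^{-2}h(X),\delta^{-1}\}$ by a fixed multiple of $\delta^{-1}\max\{\frac{\log B}{[K:\Q]},1\}$, where passing from $\frac{\log B}{[K:\Q]}-C_1(n)$ to $\max\{\frac{\log B}{[K:\Q]},1\}$ only uses $\log B\geqslant0$. Substituting this back into the bound of Theorem~\ref{upper bound of degree of general case} makes the exponent of $\delta$ drop from $4-\frac1{n-1}$ to $3-\frac1{n-1}$, carries the factor $H_K(X)^{-\frac{n!}{(n-1)(2n)^n}\delta^{-1-1/(n-1)}}$ through unchanged, produces an extra $\max\{\frac{\log B}{[K:\Q]},1\}$, and leaves exactly the constant $C'_3(n,K)$; this is the claimed estimate. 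The only real work is routine: checking that the explicit constants ($e^{C_2(n,K)}$, $n+6$, $(2n)^n$, $(n-1)!$, $-C_1(n)$, $\exp(2\epsilon_2(K)-3\log3+[K:\Q])$) reassemble precisely into $C'_3(n,K)$, and that the crude degree $2\delta+n-1$ of the large-height case is genuinely dominated by the stated bound --- both of which the paper signals by ``an elementary calculation.''
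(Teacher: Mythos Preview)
Your proposal is correct and follows the paper's own proof essentially line for line: the same large-height/small-height dichotomy via Proposition~\ref{naive siegal lemma}, the same reuse of the bound $b'(\mathscr X)\leqslant(n+6)\exp(2\epsilon_2(K)-3\log3+[K:\Q])\max\{\delta^{-2}h(X),\delta^{-1}\}$ from the proof of Corollary~\ref{uniform upper bound of the degree of auxiliary hypersurface}, and the same substitution of the small-height inequality for $h(X)$ before plugging into Theorem~\ref{upper bound of degree of general case}. The only residual checks --- that $2\delta+n-1$ is dominated by the stated bound and that the constants assemble into $C'_3(n,K)$ --- are exactly what the paper dismisses as elementary.
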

\begin{proof}
 If
    \[\frac{\log B}{[K:\Q]}<\frac{(n-1)!}{\delta(2n)^n}h(X)+C_1(n),\]
then by Proposition \ref{naive siegal lemma}, $S(X;B)$ can be covered by a hypersurface of degree no more than $2\delta+n-1$ which does not contain the generic point of $X$. The upper bound of the degree satisfies the bound provided in the statement.

If
\[\frac{\log B}{[K:\Q]}\geqslant\frac{(n-1)!}{\delta(2n)^n}h(X)+C_1(n),\]
which is equivalent to
\[h(X)\leqslant \frac{\delta(2n)^n}{(n-1)!}\cdot\frac{\log B}{[K:\Q]}-\frac{\delta(2n)^n}{(n-1)!}C_1(n),\]
then we deal with it as following. Same as the proof of Corollary \ref{uniform upper bound of the degree of auxiliary hypersurface}, we have
  \[b'(\mathscr X)\leqslant(n+6)\exp\left(2\epsilon_2(K)-3\log3+[K:\Q]\right)\max\{\delta^{-2}h(X),\delta^{-1}\},\]
  where $b'(\mathscr X)$ is the same as that in Proposition \ref{estimate of log p/p for non-geometrically integral 2} and Theorem \ref{upper bound of degree of general case}. Then we have
  \begin{eqnarray*}
& &\frac{b'(\mathscr X)}{H_K(X)^{\frac{n!}{(n-1)(2n)^n}\delta^{-1-\frac{1}{n-1}}}}\\
&\leqslant&\frac{(n+6)\exp\left(2\epsilon_2(K)-3\log3+[K:\Q]\right)\delta^{-1}}{H_K(X)^{\frac{n!}{(n-1)(2n)^n}\delta^{-1-\frac{1}{n-1}}}}\max\left\{ \frac{(2n)^n}{(n-1)!}\left(\frac{\log B}{[K:\Q]}-C_1(n)\right),1\right\}\\
&\leqslant&\frac{-C_1(n)(n+6)(2n)^{n}\exp\left(2\epsilon_2(K)-3\log3+[K:\Q]\right)}{(n-1)!H_K(X)^{\frac{n!}{(n-1)(2n)^n}\delta^{-1-\frac{1}{n-1}}}}\delta^{-1}\max\left\{\frac{\log B}{[K:\Q]},1\right\},
  \end{eqnarray*}
  and we obtain the assertion by taking the above inequality into Theorem \ref{upper bound of degree of general case}.
\end{proof}
 \section{Counting rational points in plane curves}
 As applications of Corollary \ref{uniform upper bound of the degree of auxiliary hypersurface} and Corollary \ref{uniform upper bound of the degree of auxiliary hypersurface2}, we have the following uniform upper bounds of the number of rational points with bounded height in plane curves.
 \subsection{A generalization over an arbitrary number field}
The following result generalizes \cite[Theorem 2]{Cluckers2019} over an arbitrary number field, which has the optimal dependence on the bound of heights.
 \begin{theo}\label{upper bound of rational points of bounded height in plane curves}
   Let $X$ be a geometrically integral curves in $\mathbb P^2_K$ of degree $\delta$. Then we have
   \[\#S(X;B)\leqslant C_3(2,K)\delta^4B^{{2}/{\delta}},\]
   where the constant $C_3(2,K)$ is defined in Corollary \ref{uniform upper bound of the degree of auxiliary hypersurface}. In addition, we have
   \[\#S(X;B)\ll_K\delta^4B^{{2}/{\delta}}.\]
 \end{theo}
 \begin{proof}
We apply the B\'ezout Theorem in the intersection theory (cf. \cite[Proposition 8.4]{Fulton}) to $X$ and the auxiliary hypersurface determined in Corollary \ref{uniform upper bound of the degree of auxiliary hypersurface} for the case of $n=2$, and then we obtain the result.
 \end{proof}

\subsection{A better dependence on the degree}
In this part, we will provide another uniform upper bound of rational points with bounded height in plane curves. This result has a better dependence on the degree than that of Theorem \ref{upper bound of rational points of bounded height in plane curves}, but a bit worse dependence on the bound of heights.
\begin{theo}\label{upper bound of rational points of bounded height in plane curves2}
   Let $X$ be a geometrically integral curves in $\mathbb P^2_K$ of degree $\delta$. Then we have
   \[\#S(X;B)\leqslant C'_3(2,K)\delta^3B^{{2}/{\delta}}\max\left\{\frac{\log B}{[K:\Q]},1\right\},\]
   where the constant $C'_3(2,K)$ is defined in Corollary \ref{uniform upper bound of the degree of auxiliary hypersurface2}. In addition, we have
   \[\#S(X;B)\ll_K\delta^3B^{{2}/{\delta}}\log B\]
   when $B\geqslant\exp([K:\Q])$.
 \end{theo}
 \begin{proof}
This is the same application of B\'ezout Theorem in the intersection theory (cf. \cite[Proposition 8.4]{Fulton}) to Corollary \ref{uniform upper bound of the degree of auxiliary hypersurface2} as that of Corollary \ref{upper bound of rational points of bounded height in plane curves} when $n=2$, where we take $H_K(X)\geqslant1$ defined in Definition \ref{classic height of hypersurface} into consideration.
 \end{proof}
 \begin{rema}\label{conj of HB}
   It seems that the upper bound given in Theorem \ref{upper bound of rational points of bounded height in plane curves} and Theorem \ref{upper bound of rational points of bounded height in plane curves2} are not optimal. Actually, for a geometrically integral plane curve $X\hookrightarrow\P^2_{\Q}$ of degree $\delta$, Heath-Brown conjectured the uniform upper bound
   \[\#S(X;B)\ll\delta^2B^{{2}/{\delta}}.\]
   By the examples given in \cite[\S6]{Cluckers2019}, the exponent $2$ of $\delta$ in the above conjecture would be optimal.
 \end{rema}
\section{Explicit estimates under the assumption of GRH}
Under the assumption of \textbf{GRH (the Generalized Riemann Hypothesis)} for the Dedekind zeta function of the number field $K$, we have more explicit estimates of
\[\theta_K(x)=\sum\limits_{N(\p)\leqslant x}\log N(\p),\;\psi_K(x)=\sum\limits_{N(\p)\leqslant x}\frac{\log N(\p)}{N(\p)} \hbox{, and }\phi_K(x)=\sum\limits_{N(\p)\leqslant x}\frac{\log N(\p)}{N(\p)^\frac{3}{2}},\]
where $x\in\mathbb R^+$, $\p\in\spm\O_K$, and $N(\p)=\#(\O_K/\p)$. By these results, we are able to obtain more explicit estimates of in the global  determinant method and more explicit estimates in the densities of rational points. In this section, we will give explicit estimates of the remainders of the above $\theta_K(x)$, $\psi_K(x)$ and $\phi_K(x)$.

\subsection{Explicit estimates of the distribution of prime ideals with bounded norms}
In order to obtain explicit estimates, first we refer to a result in \cite{grenie2016explicit} under the assumption of GRH. Then by the same technique in \cite{Rosen1999}, we get an explicit generalization of Mertens' first theorem. If we do not need the explicit version, it is not necessary to assume GRH.

Let $\Delta_K$ be the discriminant of the number field $K$. By \cite[Corollary 1.3]{grenie2016explicit}, if $x\geqslant3$, we have
\begin{eqnarray*}
  \left|\theta_K(x)-x\right|&\leqslant&\sqrt{x}\Bigg(\left(\frac{1}{2\pi}\log\left(\frac{18.8x}{\log^2x}\right)+2.3\right)\log\Delta_K\\
  & &\;+\left(\frac{1}{8\pi}\log^2\left(\frac{18.8x}{\log^2x}\right)+1.3\right)[K:\Q]+0.3\log x+14.6\Bigg)\\
  &\leqslant&\sqrt{x}\log^2x\left(\log\Delta_K+[K:\Q]\right)\Bigg(\frac{1}{2\pi(\log^2x)}\log\left(\frac{18.8x}{\log^2x}\right)+3.6\\
  & &\;+\frac{1}{8\pi(\log^2x)}\log^2\left(\frac{18.8x}{\log^2x}\right)+0.3\frac{1}{\log x}+14.6\frac{1}{\log^2x}\Bigg)
\end{eqnarray*}
under the assumption of GRH. Since $x\geqslant3$, we obtain
\begin{equation}\label{explicit estimate of theta_K}
  \left|\theta_K(x)-x\right|\leqslant528\sqrt{x}\log^2x\left(\log\Delta_K+[K:\Q]\right)
  \end{equation}
by Mathematica under the same assumption.

Same as the application of \cite[Lemma 2.1]{Rosen1999} to the proof of \cite[Lemma 2.3]{Rosen1999}, by Abel's summation formula, we have
\[\psi_K(x)=\frac{\theta_K(x)}{x}+\int_2^x\frac{\theta_K(t)}{t^2}dt.\]
From \eqref{explicit estimate of theta_K}, we have
\[\left|\psi_K(x)-1-\int_2^x\frac{1}{t}dt\right|\leqslant528\left(\log\Delta_K+[K:\Q]\right)\left(\frac{\log^2x}{\sqrt{x}}+\int_2^x\frac{\log^2t}{t^{\frac{3}{2}}}dt\right).\]
Then by an elementary calculation executed by Mathematica, we obtain
\begin{equation}\label{explicit estimate of psi_K}
  \left|\psi_K(x)-\log x\right|\leqslant9550\left(\log\Delta_K+[K:\Q]\right)
\end{equation}
for $x\geqslant3$ under the assumption of GRH.

The same goes for the above, we have
\[\phi_K(x)=\frac{\theta_K(x)}{x^\frac{3}{2}}+\frac{3}{2}\int_2^x\frac{\theta_K(t)}{t^\frac{5}{2}}dt.\]
Then from \eqref{explicit estimate of theta_K}, we obtain
\[\left|\phi_K(x)-\frac{1}{\sqrt{x}}-\frac{3}{2}\int_2^x\frac{1}{t^{\frac32}}dt\right|\leqslant792\left(\log\Delta_K+[K:\Q]\right)\left(\frac{\log^2x}{x}+\int_2^x\frac{\log^2t}{t^2}dt\right).\]
Then after an elementary calculation executed by Mathematica, we have
\begin{equation}\label{explicit estmiate of phi_K}
  \left|\phi_K(x)-\frac{3}{2}\sqrt{2}+\frac{2}{x^\frac{1}{2}}\right|\leqslant2516\frac{\log^2x}{x}\left(\log\Delta_K+[K:\Q]\right)
\end{equation}
uniformly for all $x\in\mathbb R^+$ under the assumption of GRH.

\subsection{Estimates of remainders}
We compare \eqref{implicit estimate of theta_K} with \eqref{explicit estimate of theta_K}, \eqref{implicit estimate of psi_K} with \eqref{explicit estimate of psi_K}, and \eqref{implicit estimate of phi_K} with \eqref{explicit estmiate of phi_K}. Then we can suppose
\[\epsilon_1(K,x)=528\sqrt{x}\log^2x\left(\log\Delta_K+[K:\Q]\right)\]
in \eqref{implicit estimate of theta_K},
\[\epsilon_2(K)=9550\left(\log\Delta_K+[K:\Q]\right)\]
in \eqref{implicit estimate of psi_K}, and
\[\epsilon_3(K,x)=2516\frac{\log^2x}{x}\left(\log\Delta_K+[K:\Q]\right)\]
in \eqref{implicit estimate of phi_K} under the assumption of GRH. We take the above estimates of $\epsilon_1(K,x)$, $\epsilon_2(K)$ and $\epsilon_3(K,x)$ into Proposition \ref{estimate of log p/p for non-geometrically integral 2}, \eqref{kappa_1(K)} and \eqref{kappa_2(K)}, then we obtain a more explicit estimate in Theorem \ref{upper bound of degree of general case}. Hence the estimates in Corollary \ref{uniform upper bound of the degree of auxiliary hypersurface}, Corollary \ref{uniform upper bound of the degree of auxiliary hypersurface2}, Theorem \ref{upper bound of rational points of bounded height in plane curves} and Theorem \ref{upper bound of rational points of bounded height in plane curves2} are more explicit under the assumption of GRH.
\begin{rema}
  Besides the case of $K=\Q$, if we work over some other particular number fields, the assumption of the Generalized Riemann Hypothesis may not be obligatory. For example, in \cite[Theorem 2]{grzeskowiak2017explicit}, we are able to do it over totally imaginary fields. It depends on the understanding of the zero-free region of the Dedekind zeta function of the number field $K$.
\end{rema}
\appendix
\section{An explicit lower bound of $Q_{\xi}(r)$}\label{Section: estimate of Q}
In this appendix, we give an explicit lower bound of the function $Q_\xi(r)$ defined in \eqref{Q_r} for the case of hypersurfaces.

 In the following proof of Proposition \ref{estimate of Q}, the inequality
 \[\frac{(N-m+1)^m}{m!}\leqslant{N \choose m}\leqslant \frac{\left(N-(m-1)/2\right)^m}{m!}\]
 will be very useful, where $N$ and $m$ are two positive integers, and $N\geqslant m\geqslant1$.
\begin{prop}\label{estimate of Q}
Let $X$ be a hypersurface of $\mathbb P^n_k$, $\xi$ be a closed point in $X$, and $\mu_\xi$ be the multiplicity of $\xi$ in $X$ induced by its local Hilbert-Samuel function. The function $Q_\xi(r)$ is defined in the equality \eqref{Q_r}. Then we have
  \begin{eqnarray*}
    Q_\xi(r)&>&\left(\frac{(n-1)!}{\mu_\xi}\right)^{\frac{1}{n-1}}\left(\frac{n-1}{n}\right)r^{\frac{n}{n-1}}-\frac{n^3+2n^2+n-4}{2n(n+1)}r.
  \end{eqnarray*}
\end{prop}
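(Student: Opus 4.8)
The plan is to reduce the statement to a sharp estimate for the partial sums of the local Hilbert--Samuel function of the hypersurface and then to optimize over one integer parameter. \emph{Step 1: a closed form for $G(s):=\sum_{t=0}^{s-1}H_\xi(t)$.} By the very definition of $\{q_\xi(m)\}$ one has $q_\xi(m)=s$ exactly for $G(s)\le m<G(s+1)$, so $G$ is the ``inverse'' of $q_\xi$, and $G(s)=\dim_k(\O_{X,\xi}/\sm_{X,\xi}^{s})$. Inserting the hypersurface formula $H_\xi(t)=\binom{n+t-1}{t}-\binom{n+t-\mu_\xi-1}{t-\mu_\xi}$ and applying the hockey-stick identity $\sum_{i=m}^{N}\binom{i}{m}=\binom{N+1}{m+1}$ twice, I obtain
\[G(s)=\binom{n+s-1}{n}-\binom{n+s-\mu_\xi-1}{n},\]
with the usual convention that a binomial coefficient with top strictly below bottom vanishes.

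\emph{Step 2: a lower bound for $Q_\xi(r)$ valid for every truncation.} Writing $q_\xi(m)=\#\{s\ge1:G(s)\le m\}$ and exchanging the order of summation gives $Q_\xi(r)=\sum_{m=0}^{r}q_\xi(m)=\sum_{s\ge1}\max\{0,\,r+1-G(s)\}$. Since $G$ is non-decreasing, truncating this series at any positive integer $S$ either drops positive terms or adds non-positive ones, so $Q_\xi(r)\ge\sum_{s=1}^{S}\bigl(r+1-G(s)\bigr)$ for every integer $S\ge1$. Summing the closed form of Step 1 by hockey-stick once more,
\[\sum_{s=1}^{S}G(s)=\binom{n+S}{n+1}-\binom{n+S-\mu_\xi}{n+1}=\sum_{j=1}^{\mu_\xi}\binom{n+S-j}{n}\le\mu_\xi\binom{n+S-1}{n}\le\frac{\mu_\xi}{n!}\Bigl(S+\tfrac{n-1}{2}\Bigr)^{n},\]
the last step using the displayed binomial inequality $\binom{N}{m}\le\frac{(N-(m-1)/2)^m}{m!}$. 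Hence, for every integer $S\ge1$,
\[Q_\xi(r)\ \ge\ S(r+1)-\frac{\mu_\xi}{n!}\Bigl(S+\tfrac{n-1}{2}\Bigr)^{n}.\]

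\emph{Step 3: optimization.} Viewed as a function of a real variable $S$ the right-hand side is concave, with maximum at $S^{\ast}=\bigl((n-1)!\,(r+1)/\mu_\xi\bigr)^{1/(n-1)}-\tfrac{n-1}{2}$, where its value equals $\tfrac{n-1}{n}\bigl((n-1)!/\mu_\xi\bigr)^{1/(n-1)}(r+1)^{n/(n-1)}-\tfrac{n-1}{2}(r+1)$. I would then take $S=\lfloor S^{\ast}\rfloor$ when $S^{\ast}\ge1$, substitute into the displayed bound, and carefully account for (i) the loss incurred by replacing $S^{\ast}$ by an integer, (ii) the shift $S+\tfrac{n-1}{2}$ inside the $n$-th power (here the lower bound $\frac{(N-m+1)^m}{m!}\le\binom{N}{m}$ is convenient for keeping control of lower-order terms), and (iii) the slack in $(r+1)^{n/(n-1)}\ge r^{n/(n-1)}$ provided by convexity. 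Collecting everything linear in $r$ is exactly what yields the constant $\tfrac{n^{3}+2n^{2}+n-4}{2n(n+1)}=\tfrac{n+1}{2}-\tfrac{2}{n(n+1)}$ in the stated error term; the remaining range $S^{\ast}<1$ forces $r$ to be bounded in terms of $n$ and $\mu_\xi$, and there one takes $S=1$ and checks the inequality by hand.

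\emph{Main obstacle.} Steps 1 and 2 are routine combinatorics together with the given binomial estimate. The genuine work is the bookkeeping in Step 3: to land on the precise constant $\tfrac{n^{3}+2n^{2}+n-4}{2n(n+1)}$ rather than a crude $O(n)$ bound, the integer $S$ must be chosen essentially optimally and each lower-order contribution tracked without waste, and the small-$r$ regime must be dispatched separately.
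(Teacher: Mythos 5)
Your Steps 1 and 2 are correct and constitute a genuinely different route from the paper's: the identity $Q_\xi(r)=\sum_{s\geqslant1}\max\{0,\,r+1-G(s)\}$ and the truncated bound $Q_\xi(r)\geqslant S(r+1)-\frac{\mu_\xi}{n!}\bigl(S+\frac{n-1}{2}\bigr)^{n}$ form a clean convex--duality argument, to be optimized over $S$. The paper instead pins $r$ in the interval $\left(U_\xi(k-1),U_\xi(k)\right]$ where $U_\xi(k)=H_\xi(0)+\cdots+H_\xi(k)$, writes $Q_\xi(r)=kr+\binom{n+k-\mu_\xi}{n+1}-\binom{n+k}{n+1}$ in closed form there, bounds the binomial difference via the ratio estimate $\bigl[\binom{n+k}{n+1}-\binom{n+k-\mu_\xi}{n+1}\bigr]\big/U_\xi(k-1)\leqslant\frac{(n+1)k+n^2+n-\mu_\xi-1}{n(n+1)}$ together with $U_\xi(k-1)<r\leqslant U_\xi(k)$, and only then converts $k$ into $r^{1/(n-1)}$. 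In that route the integer parameter is dictated by $r$ itself, so no rounding of a continuous optimum ever occurs, and the constant appears transparently as $\frac{n-1}{2}+\bigl(1-\frac{2}{n(n+1)}\bigr)=\frac{n^3+2n^2+n-4}{2n(n+1)}$.

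The gap is in your Step 3, where the claim that the bookkeeping ``yields exactly'' the stated constant is asserted rather than shown, and the natural way to do that bookkeeping fails. The continuous optimum contributes the linear term $-\frac{n-1}{2}(r+1)$, so your entire budget for the rounding loss $f(S^{\ast})-f(\lfloor S^{\ast}\rfloor)$ is roughly $\bigl(1-\frac{2}{n(n+1)}\bigr)r$ plus the convexity gain $\bigl((n-1)!\,r/\mu_\xi\bigr)^{1/(n-1)}$ from replacing $(r+1)^{n/(n-1)}$ by $r^{n/(n-1)}$. The first-order bound on that loss, $f(S^{\ast})-f(S^{\ast}-t)\leqslant t(r+1)$ with $t=\{S^{\ast}\}$ possibly close to $1$, exceeds this budget once $r$ is large (for $n\geqslant3$ the convexity gain is $o(r)$), so the plan as written does not land on the claimed constant. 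The missing idea is to exploit $f'(S^{\ast})=0$: the loss is second order, bounded by $\frac12\sup|f''|\leqslant\frac{n-1}{2}\bigl(\mu_\xi/(n-1)!\bigr)^{1/(n-1)}(r+1)^{(n-2)/(n-1)}$, which fits the budget only for $r\gg_n\mu_\xi$. Consequently the complementary range (including $S^{\ast}<1$) extends up to $r\asymp_n\mu_\xi$ and is not the finite ``check by hand'' you describe; it needs its own argument (e.g.\ observing that $Q_\xi(r)\geqslant r$ while the right-hand side of the proposition is at most $r$ in that range). Without the second-order estimate at the critical point and a genuine treatment of this unbounded small-$r$ regime, Step 3 does not close.
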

\begin{proof}
For the case of hypersurfaces, by \cite[Example 2.70 (2)]{Kollar2007}, we have
\begin{equation*}
  H_\xi(s)={n+s-1\choose s}-{n+s-\mu_\xi-1\choose s-\mu_\xi}.
\end{equation*}

 We define the function $U_\xi(k)=H_\xi(0)+\cdots+H_\xi(k)$, then we have
  \begin{eqnarray*}
    U_\xi(k)&=&\sum\limits_{j=0}^k{n+j-1\choose j}-\sum\limits_{j=0}^k{n+j-\mu_\xi-1\choose j-\mu_\xi}\\
    &=& {n+k\choose n}-{n+k-\mu_\xi\choose n}.
  \end{eqnarray*}
Then we obtain
  \begin{eqnarray*}
    Q_\xi(U_\xi(k))&=&\sum\limits_{j=0}^njH_\xi(j)\\
    &=&\sum\limits_{j=0}^kj{j+n-1\choose n-1}-\sum\limits_{j=0}^kj{n+j-\mu_\xi-1\choose n-1}\\
    &=& n{k+n\choose n+1}-n{k-\mu_\xi+n\choose n+1}-\mu_\xi{n+k-\mu_\xi\choose n}.
  \end{eqnarray*}

Let $r\in]U_\xi(k-1),U_\xi(k)]$. By the definition of $Q_\xi(r)$ in the equality \eqref{Q_r}, we have the inequality
\[Q_\xi(U_\xi(k-1))\leqslant Q_\xi(r)\leqslant Q_\xi(U_\xi(k)).\]
So we have
\begin{eqnarray}\label{Q(r)-xi}
  Q_\xi(r)&=&Q_\xi(U_\xi(k-1))+k(r-U_\xi(k-1))\nonumber\\
  &=&n{n+k-1\choose n+1}-n{n+k-\mu_\xi-1\choose n+1}-\mu_\xi{n+k-\mu_\xi-1\choose n}\nonumber\\
  & &+kr-k{n+k-1\choose n}+k{n+k-\mu_\xi-1\choose n}\nonumber\\
  &=&kr+{n+k-\mu_\xi\choose n+1}-{n+k\choose n+1}.
\end{eqnarray}

In order to get a lower bound of $Q_\xi(r)$, we need to estimate the term
\[{n+k-\mu_\xi\choose n+1}-{n+k\choose n+1}.\]
In fact, we have the estimate
\begin{eqnarray*}
  & &\left[{n+k\choose n+1}-{n+k-\mu_\xi\choose n+1}\right]\Big/U_\xi(k-1)\\
&=&\left[{n+k\choose n+1}-{n+k-\mu_\xi\choose n+1}\right]\Big/\left[{n+k-1\choose n}-{n+k-\mu_\xi-1\choose n}\right]\\
  &=&\frac{(n+k)(n+k-1)\cdots k-(n+k-\mu_\xi)\cdots(k-\mu_\xi)}{(n+1)[(n+k-1)\cdots k-(n+k-\mu_\xi-1)\cdots(k-\mu_\xi)]}\\
  &=&\left(\frac{(n+k)(n+k-1)}{k-1}\frac{(n+k-2)\cdots(k-1)}{(n+k-\mu_\xi-1)\cdots(k-\mu_\xi)}-(n+k-\mu_\xi)\right)\Big/\\
& &\left(\left[\frac{n+k-1}{k-1}
  \left[\frac{(n+k-2)\cdots(k-1)}{(n+k-\mu_\xi-1)\cdots(k-\mu_\xi)}\right]-1\right](n+1)\right)\\
  &=&\frac{1}{n+1}\left[n+k+\frac{\mu_\xi}{\frac{n+k-1}{k-1}\cdot\frac{(n+k-2)\cdots(k-1)}{(n+k-\mu_\xi-1)\cdots(k-\mu_\xi)}-1}\right]\\
  &\leqslant&\frac{1}{n+1}\left[n+k+\frac{\mu_\xi}{\left(\frac{n+k-1}{n+k-\mu_\xi-1}\right)^n-1}\right]\\
  &=&\frac{1}{n+1}\left[n+k+\frac{(n+k-\mu_\xi-1)^n}{(n+k-1)^{n-1}+\cdots+(n+k-\mu_\xi-1)^{n-1}}\right]\\
  &\leqslant&\frac{1}{n+1}\left(n+k+\frac{n+k-\mu_\xi-1}{n}\right)\\
  &=&\frac{(n+1)k+n^2+n-\mu_\xi-1}{n(n+1)}.
\end{eqnarray*}

By the equality \eqref{Q(r)-xi}, we obtain
\begin{eqnarray}\label{U(k-1)<r}
  Q_\xi(r)&=&kr+{n+k-\mu_\xi\choose n+1}-{n+k\choose n+1}\nonumber\\
  &\geqslant&kr-\frac{(n+1)k+n^2+n-\mu_\xi-1}{n(n+1)}U_\xi(k-1)\\
  &>&kr-\frac{(n+1)k+n^2+n-\mu_\xi-1}{n(n+1)}r\nonumber\\
  &=&\frac{(n^2-1)k-n^2-n+\mu_\xi+1}{n(n+1)}r\nonumber\\
  &=&\left(\frac{n-1}{n}k-\frac{n^2+n-\mu_\xi-1}{n(n+1)}\right)r\nonumber,
\end{eqnarray}
where we use the estimate $U_\xi(k-1)<r$ in the inequality \eqref{U(k-1)<r}. In addition, we obtain the inequality
\begin{eqnarray*}
  r\leqslant U_\xi(k)&=&{n+k\choose n}-{n+k-\mu_\xi\choose n}=\sum\limits_{j=1}^{\mu_\xi}{n+k-j\choose n-1}\\&\leqslant&\frac{1}{(n-1)!}\sum\limits_{j=1}^{\mu_\xi}(k+\frac{n}{2}-j+1)^{n-1}.
\end{eqnarray*}

In addition, we have
\begin{equation*}
  \sum\limits_{j=1}^{\mu_\xi}(k+\frac{n}{2}-j+1)^{n-1}\leqslant\mu_\xi\left(k+\frac{n}{2}\right)^{n-1}.
\end{equation*}
Then
\begin{equation*}
  k\geqslant\frac{1}{\sqrt[n-1]{\mu_\xi}}\sqrt[n-1]{(n-1)!r}-\frac{n}{2}.
\end{equation*}

Finally we have
\begin{eqnarray*}
  Q_\xi(r)&>&\left(\frac{n-1}{n}\left(\frac{1}{\sqrt[n-1]{\mu_\xi}}\sqrt[n-1]{(n-1)!r}-\frac{n}{2}\right)-\frac{n^2+n-\mu_\xi-1}{n(n+1)}\right)r\\
  &\geqslant&\left(\frac{(n-1)!}{\mu_\xi}\right)^{\frac{1}{n-1}}\left(\frac{n-1}{n}\right)r^{\frac{n}{n-1}}-\frac{n^3+2n^2+n-4}{2n(n+1)}r,
\end{eqnarray*}
for $\mu_\xi\geqslant1$. Then we obtain the result.
\end{proof}
\backmatter

\bibliography{liu}

\def\cprime{$'$} \def\cprime{$'$}
\providecommand{\bysame}{\leavevmode ---\ }
\providecommand{\og}{``}
\providecommand{\fg}{''}
\providecommand{\smfandname}{\&}
\providecommand{\smfedsname}{\'eds.}
\providecommand{\smfedname}{\'ed.}
\providecommand{\smfmastersthesisname}{M\'emoire}
\providecommand{\smfphdthesisname}{Th\`ese}
\begin{thebibliography}{10}

\bibitem{Abbes-Bouche}
{\scshape A.~Abbes {\normalfont \smfandname} T.~Bouche} -- {\og Th\'eor\`eme de
  {H}ilbert-{S}amuel ``arithm\'etique''\fg}, \emph{Universit\'e de Grenoble.
  Annales de l'Institut Fourier} \textbf{45} (1995), no.~2, p.~375--401.

\bibitem{2-torsion}
{\scshape M.~Bhargava, A.~Shankar, T.~Taniguchi, F.~Thorne, J.~Tsimerman
  {\normalfont \smfandname} Y.~Zhao} -- {\og Bounds on 2-torsion in class
  groups of number fields and integral points on elliptic curves\fg},
  \emph{Journal of the American Mathematical Society} \textbf{33} (2020),
  no.~4, p.~1087--1099.

\bibitem{Bombieri_Pila}
{\scshape E.~Bombieri {\normalfont \smfandname} J.~Pila} -- {\og The number of
  integral points on arcs and ovals\fg}, \emph{Duke Mathematical Journal}
  \textbf{59} (1989), no.~2, p.~337--357.

\bibitem{BostBour96}
{\scshape J.-B. Bost} -- {\og P\'eriodes et isog\'enies des vari\'et\'es
  ab\'eliennes sur les corps de nombres (d'apr\`es {D}. {M}asser et {G}.
  {W}\"ustholz)\fg}, \emph{Ast\'erisque} (1996), no.~237, p.~Exp.\ No.\ 795, 4,
  115--161, S\'eminaire Bourbaki, Vol.\ 1994/1995.

\bibitem{Broberg04}
{\scshape N.~Broberg} -- {\og A note on a paper by {R}. {H}eath-{B}rown:
  ``{T}he density of rational points on curves and surfaces'' [{A}nn. of
  {M}ath. (2) {\bf 155} (2002), no. 2, 553--595]\fg}, \emph{Journal f\"ur die
  Reine und Angewandte Mathematik} \textbf{571} (2004), p.~159--178.

\bibitem{Browning_Heath06I}
{\scshape T.~D. Browning {\normalfont \smfandname} D.~R. Heath-Brown} -- {\og
  The density of rational points on non-singular hypersurfaces. {I}\fg},
  \emph{The Bulletin of the London Mathematical Society} \textbf{38} (2006),
  no.~3, p.~401--410.

\bibitem{Browning_Heath06II}
\bysame , {\og The density of rational points on non-singular hypersurfaces.
  {II}\fg}, \emph{Proceedings of the London Mathematical Society. Third Series}
  \textbf{93} (2006), no.~2, p.~273--303, With an appendix by J. M. Starr.

\bibitem{Bro_HeathB_Salb}
{\scshape T.~D. Browning, D.~R. Heath-Brown {\normalfont \smfandname}
  P.~Salberger} -- {\og Counting rational points on algebraic varieties\fg},
  \emph{Duke Mathematical Journal} \textbf{132} (2006), no.~3, p.~545--578.

\bibitem{cafure2006improved}
{\scshape A.~Cafure {\normalfont \smfandname} G.~Matera} -- {\og Improved
  explicit estimates on the number of solutions of equations over a finite
  field\fg}, \emph{Finite Fields and Their Applications} \textbf{12} (2006),
  no.~2, p.~155--185.

\bibitem{Cluckers2019}
{\scshape W.~Castryck, R.~Cluckers, P.~Dittmann {\normalfont \smfandname} K.~H.
  Nguyen} -- {\og The dimension growth conjecture, polynomial in the degree and
  without logarithmic factors\fg}, \emph{Algebra \& Number Theory} \textbf{14}
  (2020), no.~8, p.~2261--2294.

\bibitem{Chen10b}
{\scshape H.~Chen} -- {\og Convergence des polygones de
  {H}arder-{N}arasimhan\fg}, \emph{{M}\'emoires de la {S}oci\'et\'e
  Math\'ematique de France} \textbf{120} (2010), p.~1--120.

\bibitem{Chen1}
\bysame , {\og Explicit uniform estimation of rational points {I}. {E}stimation
  of heights\fg}, \emph{Journal f\"ur die Reine und Angewandte Mathematik}
  \textbf{668} (2012), p.~59--88.

\bibitem{Chen2}
\bysame , {\og Explicit uniform estimation of rational points {II}.
  {H}ypersurface coverings\fg}, \emph{Journal f\"ur die Reine und Angewandte
  Mathematik} \textbf{668} (2012), p.~89--108.

\bibitem{David_Philippon99}
{\scshape S.~David {\normalfont \smfandname} P.~Philippon} -- {\og Minorations
  des hauteurs normalis\'ees des sous-vari\'et\'es des tores\fg}, \emph{Annali
  della Scuola Normale Superiore di Pisa. Classe di Scienze. Serie IV}
  \textbf{28} (1999), no.~3, p.~489--543.

\bibitem{Faltings91}
{\scshape G.~Faltings} -- {\og Diophantine approximation on abelian
  varieties\fg}, \emph{Annals of Mathematics. Second Series} \textbf{133}
  (1991), no.~3, p.~549--576.

\bibitem{Fulton}
{\scshape W.~Fulton} -- \emph{Intersection theory}, second \smfedname,
  Ergebnisse der Mathematik und ihrer Grenzgebiete. 3. Folge. A Series of
  Modern Surveys in Mathematics [Results in Mathematics and Related Areas. 3rd
  Series. A Series of Modern Surveys in Mathematics], vol.~2, Springer-Verlag,
  Berlin, 1998.

\bibitem{Gillet-Soule91}
{\scshape H.~Gillet {\normalfont \smfandname} C.~Soul{\'e}} -- {\og On the
  number of lattice points in convex symmetric bodies and their duals\fg},
  \emph{Israel Journal of Mathematics} \textbf{74} (1991), no.~2-3,
  p.~347--357.

\bibitem{Gillet_Soule-IHES90}
{\scshape H.~Gillet {\normalfont \smfandname} C.~Soul{\'e}} -- {\og Arithmetic
  intersection theory\fg}, \emph{Institut des Hautes \'Etudes Scientifiques.
  Publications Math\'ematiques} (1990), no.~72, p.~93--174 (1991).

\bibitem{Gillet-Soule}
\bysame , {\og An arithmetic {R}iemann-{R}och theorem\fg}, \emph{Inventiones
  Mathematicae} \textbf{110} (1992), no.~3, p.~473--543.

\bibitem{grenie2016explicit}
{\scshape L.~Greni\'{e} {\normalfont \smfandname} G.~Molteni} -- {\og Explicit
  smoothed prime ideals theorems under {GRH}\fg}, \emph{Mathematics of
  Computation} \textbf{85} (2016), no.~300, p.~1875--1899.

\bibitem{EGAIV_3}
{\scshape A.~Grothendieck {\normalfont \smfandname} J.~Dieudonn\'e} -- {\og
  \'{E}l\'ements de g\'eom\'etrie alg\'ebrique. {IV}. \'{E}tude locale des
  sch\'emas et des morphismes de sch\'emas. {III}\fg}, \emph{Institut des
  Hautes \'Etudes Scientifiques. Publications Math\'ematiques} (1966), no.~28,
  p.~255.

\bibitem{grzeskowiak2017explicit}
{\scshape M.~Grze{\'s}kowiak} -- {\og Explicit bound for the prime ideal
  theorem in residue classes\fg}, in \emph{International Conference on
  Number-Theoretic Methods in Cryptology}, Springer, 2017, p.~48--68.

\bibitem{GTM52}
{\scshape R.~Hartshorne} -- \emph{Algebraic geometry}, Springer-Verlag, New
  York, 1977, Graduate Texts in Mathematics, No. 52.

\bibitem{Heath-Brown}
{\scshape D.~R. Heath-Brown} -- {\og The density of rational points on curves
  and surfaces\fg}, \emph{Annals of Mathematics. Second Series} \textbf{155}
  (2002), no.~2, p.~553--595.

\bibitem{Hindry}
{\scshape M.~Hindry {\normalfont \smfandname} J.~H. Silverman} --
  \emph{Diophantine geometry}, Graduate Texts in Mathematics, vol. 201,
  Springer-Verlag, New York, 2000, An introduction.

\bibitem{Kollar2007}
{\scshape J.~Koll{\'a}r} -- \emph{Lectures on resolution of singularities},
  Annals of Mathematics Studies, vol. 166, Princeton University Press,
  Princeton, NJ, 2007.

\bibitem{Liu-multiplicity}
{\scshape C.~Liu} -- {\og Comptage des multiplicit\'es dans une hypersurface
  sur un corps fini\fg}, \emph{\url{arxiv:1606.09337}} (2016), to appear in
  Kyoto Journal of Mathematics.

\bibitem{Liu-reduced}
\bysame , {\og Fibres non r\'{e}duites d'un sch\'{e}ma arithm\'{e}tique\fg},
  \emph{Mathematische Zeitschrift} \textbf{299} (2021), no.~3-4, p.~1275--1302.

\bibitem{Liu-non-geometricallyintegral}
\bysame , {\og Control of the non-geometrically integral reductions\fg},
  \emph{International Journal of Number Theory} \textbf{18} (2022), no.~1,
  p.~47--60.

\bibitem{Matsumura}
{\scshape H.~Matsumura} -- \emph{Commutative ring theory}, second \smfedname,
  Cambridge Studies in Advanced Mathematics, vol.~8, Cambridge University
  Press, Cambridge, 1989, Translated from the Japanese by M. Reid.

\bibitem{Moriwaki-book}
{\scshape A.~Moriwaki} -- \emph{Arakelov geometry}, Translations of
  Mathematical Monographs, vol. 244, American Mathematical Society, Providence,
  RI, 2014, Translated from the 2008 Japanese original.

\bibitem{Motte2019}
{\scshape F.~Motte} -- {\og De la g\'eom\'etrie à l'arithm\'etique en
  th\'eorie inverse de {G}alois\fg}, \smfphdthesisname, Universit\'e de
  {L}ille, Mai 2019.

\bibitem{Neukirch}
{\scshape J.~Neukirch} -- \emph{Algebraic number theory}, Grundlehren der
  Mathematischen Wissenschaften [Fundamental Principles of Mathematical
  Sciences], vol. 322, Springer-Verlag, Berlin, 1999, Translated from the 1992
  German original and with a note by Norbert Schappacher, With a foreword by G.
  Harder.

\bibitem{Paredes-Sasyk2021}
{\scshape M.~Paredes {\normalfont \smfandname} R.~Sasyk} -- {\og Uniform bounds
  for the number of rational points on varieties over global fields\fg},
  \emph{\url{arXiv:2101.12174}} (2021), to appear in Algebra and Number Theory.

\bibitem{Pila95}
{\scshape J.~Pila} -- {\og Density of integral and rational points on
  varieties\fg}, \emph{Ast\'erisque} (1995), no.~228, p.~4, 183--187, Columbia
  University Number Theory Seminar (New York, 1992).

\bibitem{Randriam06}
{\scshape H.~Randriambololona} -- {\og M\'etriques de sous-quotient et
  th\'eor\`eme de {H}ilbert-{S}amuel arithm\'etique pour les faisceaux
  coh\'erents\fg}, \emph{Journal f\"ur die Reine und {A}ngewandte {M}athematik}
  \textbf{590} (2006), p.~67--88.

\bibitem{Rosen1999}
{\scshape M.~Rosen} -- {\og A generalization of {M}ertens’ theorem\fg},
  \emph{Journal of the Ramanujan Mathematical Society} \textbf{14} (1999),
  p.~1--19.

\bibitem{Ruppert1986}
{\scshape W.~Ruppert} -- {\og Reduzibilit\"at ebener kurven.\fg}, \emph{Journal
  f\"ur die Reine und Angewandte Mathematik} \textbf{369} (1986), p.~167--191.

\bibitem{Salberger07}
{\scshape P.~Salberger} -- {\og On the density of rational and integral points
  on algebraic varieties\fg}, \emph{Journal f\"ur die Reine und Angewandte
  Mathematik} \textbf{606} (2007), p.~123--147.

\bibitem{Salberger_preprint2013}
\bysame , {\og Counting rational points on projective varieties\fg},  (2013),
  preprint.

\bibitem{Serre1997}
{\scshape J.-P. Serre} -- \emph{Lectures on the {M}ordell-{W}eil theorem},
  third \smfedname, Aspects of Mathematics, Friedr. Vieweg \& Sohn,
  Braunschweig, 1997, Translated from the French and edited by Martin Brown
  from notes by Michel Waldschmidt, With a foreword by Brown and Serre.

\bibitem{Serre2008}
\bysame , \emph{Topics in {G}alois theory}, second \smfedname, Research Notes
  in Mathematics, vol.~1, A K Peters, Ltd., Wellesley, MA, 2008, With notes by
  Henri Darmon.

\bibitem{Soule92}
{\scshape C.~Soul{\'e}} -- \emph{Lectures on {A}rakelov geometry}, Cambridge
  Studies in Advanced Mathematics, vol.~33, Cambridge University Press,
  Cambridge, 1992, With the collaboration of D. Abramovich, J.-F. Burnol and J.
  Kramer.

\bibitem{Ten}
{\scshape G.~Tenenbaum} -- \emph{Introduction \`a la th\'eorie analytique et
  probabiliste des nombres}, 4-i\`eme \smfedname, \'Echelles, Belin, 2015.

\bibitem{Thompson96}
{\scshape A.~C. Thompson} -- \emph{Minkowski geometry}, Encyclopedia of
  Mathematics and its Applications, vol.~63, Cambridge University Press,
  Cambridge, 1996.

\bibitem{Vermeulen2020}
{\scshape F.~Vermeulen} -- {\og Points of bounded height on curves and the
  dimension growth conjecture over $\mathbb{F}_q[t]$\fg}, \emph{\url{arxiv:
  2003.10988}} (2020).

\bibitem{Walkowiak2005}
{\scshape Y.~Walkowiak} -- {\og Th{\'e}or{\`e}me d'irr{\'e}ductibilit{\'e} de
  hilbert effectif\fg}, \emph{Acta Arithmetica} \textbf{116} (2005),
  p.~343--362.

\bibitem{Walsh_2015}
{\scshape M.~N. Walsh} -- {\og Bounded rational points on curves\fg},
  \emph{International Mathematics Research Notices. IMRN} (2015), no.~14,
  p.~5644--5658.

\bibitem{Zhang95}
{\scshape S.~Zhang} -- {\og Positive line bundles on arithmetic varieties\fg},
  \emph{{J}ournal of the {A}merican {M}athematical {S}ociety} \textbf{8}
  (1995), no.~1, p.~187--221.

\end{thebibliography}
\bibliographystyle{smfplain}

\end{document}